\newtheorem{theorem}{Theorem}[section]
\newtheorem{lemma}[theorem]{Lemma}
\newtheorem{definition}[theorem]{Definition}
\newtheorem{Remark}[theorem]{Remark}
\newtheorem{Conjecture}[theorem]{Conjecture}
\newtheorem{proposition}[theorem]{Proposition}
\newtheorem{corollary}[theorem]{Corollary}
\newtheorem{Example}[theorem]{Example}
\newtheorem{question}[theorem]{Question}
\newtheorem{Number}[theorem]{\!\!}
\newenvironment{example}{\begin{Example}\rm}{\end{Example}}
\newenvironment{remark}{\begin{Remark}\rm}{\end{Remark}}
\newenvironment{numba}{\begin{Number}\rm}{\end{Number}}
\newenvironment{proof}{{\noindent\bf Proof.}}%
                  {\nopagebreak\hspace*{\fill}$\Box$\medskip\par}
\DeclareMathOperator{\Aut}{Aut}
\DeclareMathOperator{\Isom}{Isom}
\DeclareMathOperator{\Sym}{Sym}
\DeclareMathOperator{\Alt}{Alt}
\newcommand{\tree}{T}
\DeclareMathOperator{\id}{id}
\DeclareMathOperator{\con}{{\sf con}}
\newcommand{\vtx}[1]{\mathbf{#1}}
\newcommand{\tvtx}[1]{\tilde{\mathbf{#1}}}
\newcommand{\gr}[1]{\mathcal{#1}}
\newcommand{\Psr}[1]{\widehat{#1}}
\def\tree{T}
\def\treeq{T_{q+1}}
\def\wtreeq{{T}_{q+1}}
\def\treeqq{T_{q,q}}
\def\treeqqo{T_{q,q-1}}
\def\inftran{\mbox{\rm Sc}(q+1)}
\title{Scale groups}
\author{George A. Willis\thanks{This research was supported by the Australian Research Council grant FL170100032 and by a Texas A\&M, College Station, visitor grant.}}
\begin{document}
\maketitle
\begin{abstract}
Closed subgroups of the group of isometries of the regular tree $\treeq$ that fix an end of the tree and are vertex-transitive are shown to correspond, on one hand, to self-replicating groups acting on rooted trees and, on the other hand, to elements of totally disconnected, locally compact groups having positive scale.
\end{abstract}
{\bf Classification:} Primary 20E08; 
Secondary 22D05, 20E22, 20F65\\
%
%
{\bf Key words:} rooted tree; automorphism group; scale; self-similar group; self-replicating group
\\[11mm]
\section{Introduction}
\label{sec:Intro}


This article concerns what we shall call \emph{scale groups}, that is, closed subgroups of the group of isometries of a regular tree fixing an end of the tree and transitive on its vertices. Scale groups are totally disconnected, locally compact (t.d.l.c.)~groups. They appear in several guises in group theory, and emerge as an essential part of the abstract theory of t.d.l.c.~groups. The article discusses the roles played by scale groups and develops the basic properties of these groups. The overview presented in the next few paragraphs refers to the table in Figure~\ref{fig:concept_relations}. Rows of the table correspond to increasing levels of concrete representation of groups, and columns to three settings in which scale groups occur.

	If $G$ is any closed group of isometries of a tree acting $2$-transitively on the boundary, then, by~\cite[Proposition~2]{Nebbia} or~\cite[Lemma~3.1.1]{BM} and~\cite[Lemma~2.2]{Radu}, $G$ has at most two orbits of vertices. In the case of one orbit, the fixator of an end is a scale group.
Examples include subgroups of simple Lie groups over local fields~\cite{Serre_Lie,Schneider} and of Kac-Moody groups over finite fields~\cite{CarErsRit,CapRem}. The affine buildings, see~\cite{BrownAbram}, of rank~1 groups in these classes are trees and the action of the group on the boundary is $2$-transitive. These scale groups are part of the ``microcosm" for the class of all (simple) t.d.l.c.~groups discussed by P.-E.~Caprace in~\cite{Caprace_microcosm}. Further examples of groups $2$-transitive on the boundary of a tree are defined in~\cite{BM} with the aid of a \emph{legal colouring} of the edges (an idea introduced in~\cite{LMRoom}), and this definition inspired the method for constructing new examples from old given in \cite{P(k)_BanksElderWi}. The Caprace microcosm is explored in~\cite{Radu}, which classifies, for certain valencies, simple groups of tree automorphisms acting $2$-transitively on the boundary. These classes of scale groups are identified in the first column of the table in Figure~\ref{fig:concept_relations}.

A scale group arises from any automorphism, $\alpha$, of a t.d.l.c.~group, $G$, for which no compact open subgroup, $U$, is invariant. For any  such $U$, the index $[\alpha(U) : \alpha(U)\cap U]$ is a positive integer and the minimum of these indices is the \emph{scale of $\alpha$}, denoted $s(\alpha)$. If no $U$ is invariant, then $s(\alpha)>1$ and there is a closed $\alpha$-stable subgroup, $H$, of $G$ such that $H\rtimes \langle\alpha\rangle$ is represented as a scale group acting on a tree with valency $s(\alpha)+1$,~\cite{GWil,BW}. Conversely, every scale group has the form $H\rtimes \langle\alpha\rangle$ with $H$ a closed subgroup and $\alpha$ an inner automorphism. Scale groups are thus essential features of the structure of t.d.l.c.~groups, analogous to $(ax+b)$-subgroups of a Lie group. These ideas are summarised in the second column of the table in Figure~\ref{fig:concept_relations}. The \emph{compatible labelling} referred to in the third row of the second column is important in the analysis of scale groups made here. 

Also mentioned in the second column, scale groups occur in the setting of actions t.d.l.c.~groups on hyperbolic spaces. These actions are classified in \cite{Gromov_hyperbolic} and, among the classes, the \emph{focal} actions turn out to be precisely scale groups, see~\cite[\S3.1 \& Theorem 7.1]{CaCoMoTe}, thus giving an alternative characterisation of general scale groups in a quite different context. 

Scale groups occur less directly in connection with just-infinite groups~\cite{Wilson_ji}, and with periodicity and growth in finitely generated discrete groups~\cite{Grigorchuk}. The notion of a \emph{branch group}, which acts on a locally finite rooted tree \cite{Grig_branch}, emerged from these distinct directions: in \cite{Wilson_ji} branch groups form one of two classes of just-infinite groups; and the examples constructed in \cite{Grigorchuk} of finitely generated, locally finite torsion groups with intermediate growth are branch groups. Since they act on locally finite rooted trees, branch groups are residually finite and embed into profinite groups, {\it i.e.\/}~compact t.d.l.c.~groups. Many branch groups are \emph{self-similar} or even \emph{self-replicating}, see~\cite[Definition~2.8.1]{Nekrash}, which properties are defined in terms of a labelling of vertices of the rooted tree by strings. Propositions~\ref{prop:s.s.&s.r.} and~\ref{prop:s.s.&s.r.2} in the present paper show that there is an equivalence between and self-replicating groups and scale groups via a compatible labelling of the regular tree. These connections are summarised in the third column of the table in Figure~\ref{fig:concept_relations}. 

The structure of the paper is as follows. Definitions of scale groups and self-replicating groups of automorphisms of regular rooted trees are given in Section~\ref{sec:scalegp}. Descriptions in terms of strings of symbols are given of the regular tree, $\treeq$, in which all vertices have valency $q+1$ and the rooted tree, $\treeqq$, in which all vertices have $q$ children well-known isometry groups are described in terms of these to motivate later ideas. Labellings of $\treeq$ compatible with a scale group are defined in Section~\ref{sec:s.s&s.r}, and the equivalence between scale groups and self-replicating groups is shown. 

Further concepts, results and questions about scale groups complete the paper. The \emph{residue} of a scale group, a group of permutations on $\{0,\dots,q-1\}$ analogous to the residue field of a local field, is defined at the end of Section~\ref{sec:s.s&s.r}. The representation of $H\rtimes \langle\alpha\rangle$ as isometries of $\treeq$ is recalled in Section~\ref{sec:tree_rep} and described concretely in terms of a compatible labelling. Scale groups are characterised up to group isomorphism, and it is shown that the tree representation is unique when the residue permutation group is primitive. Section~\ref{sec:double_transitive} surveys groups of isometries of regular trees that are $2$-transitive on the boundary and how such groups relate to scale groups.

Terminology concerning trees is consistent for the most part with \cite{Serre,Tits}. \emph{Vertices} of a tree will be denoted by bold letters, $\vtx{v}$. A \emph{directed edge} is an ordered pair, $(\vtx{v, w})$, and an \emph{undirected edge} is a pair $\{(\vtx{v, w}),(\vtx{w, v})\}$, which is abbreviated to $\{\vtx{v, w}\}$. The sets of vertices and edges in the tree $\tree$ will be denoted by $\gr{V}\tree$ and $\gr{E}\tree$ respectively. A \emph{path} in a tree is a one-to-one mapping $\vtx{p} : I \to \gr{V}\tree$ with $I$ a finite, semi-infinite or infinite interval in $\mathbb{Z}$ and $(\vtx{p}_n,\vtx{p}_{n+1})$ a directed edge with $\vtx{p}_{n-1}\ne\vtx{p}_{n+1}$ whenever $n-1,n+1\in I$. A path $\vtx{p}$ which  has $I=\mathbb{N}$ is a \emph{ray}, and two rays $\vtx{p},\vtx{p}'$ are \emph{equivalent} if $\vtx{p}(\mathbb{N})\cap \vtx{p}'(\mathbb{N})$ is infinite. An equivalence class of rays is an \emph{end} of the tree $\tree$ and the set of ends is denoted $\partial\tree$. The notation $[\vtx{v},\omega)$, with $\vtx{v}$ a vertex and $\omega$ an end, will mean the ray $\vtx{p}$ such that $\vtx{p}_0 = \vtx{v}$ and $\vtx{p}\in \omega$, and $(\omega,\vtx{v}]$ will mean the path $n\mapsto \vtx{v}_{-n}:\ -\mathbb{N}\to \gr{V}\tree$ with $\vtx{p}$ a ray.

Every automorphism of the combinatorial tree $\tree$ is an isometry of its geometric realisation, and conversely, and the group of all automorphisms will be denoted by $\Isom(\tree)$. For any subgroup $G\leq \Isom(\tree)$ and $\vtx{v}\in V$, the stabiliser of $\vtx{v}$ is $G_{\vtx{v}}$. The action of $G$ on $\tree$ by automorphisms will be denoted $G\curvearrowright \gr{V}\tree$ and the image of $\vtx{v}$ under $g\in G$ by $g.\vtx{v}$. A tree homomorphism $\varphi : \tree^{(1)}\to \tree^{(2)}$ is $G$-\emph{equivariant} if $\varphi(g.\vtx{v}) = g.\vtx{v}$ for all $\vtx{v}\in\gr{V}\tree$ and $g\in G$. 

Throughout, $p$ denotes a prime number, $q$ may be a power of a prime but often is not, and $\mathbb{N}$ includes 0. 

\paragraph{Acknowledgements} I am grateful for valuable discussions with A.~Garrido, R.~Grigorchuk, P.-E.~Caprace, D.~Horadam, V.~Nekrashevych, Z.~\v{S}uni\'{c} and S.~Tornier that assisted the writing of this article and for comments by R.~Grigorchuk correcting an earlier version of the manuscript. I also thank G.~Pisier for hosting an enjoyable visit to Texas A\&M at College Station during which some of these discussions took place. 



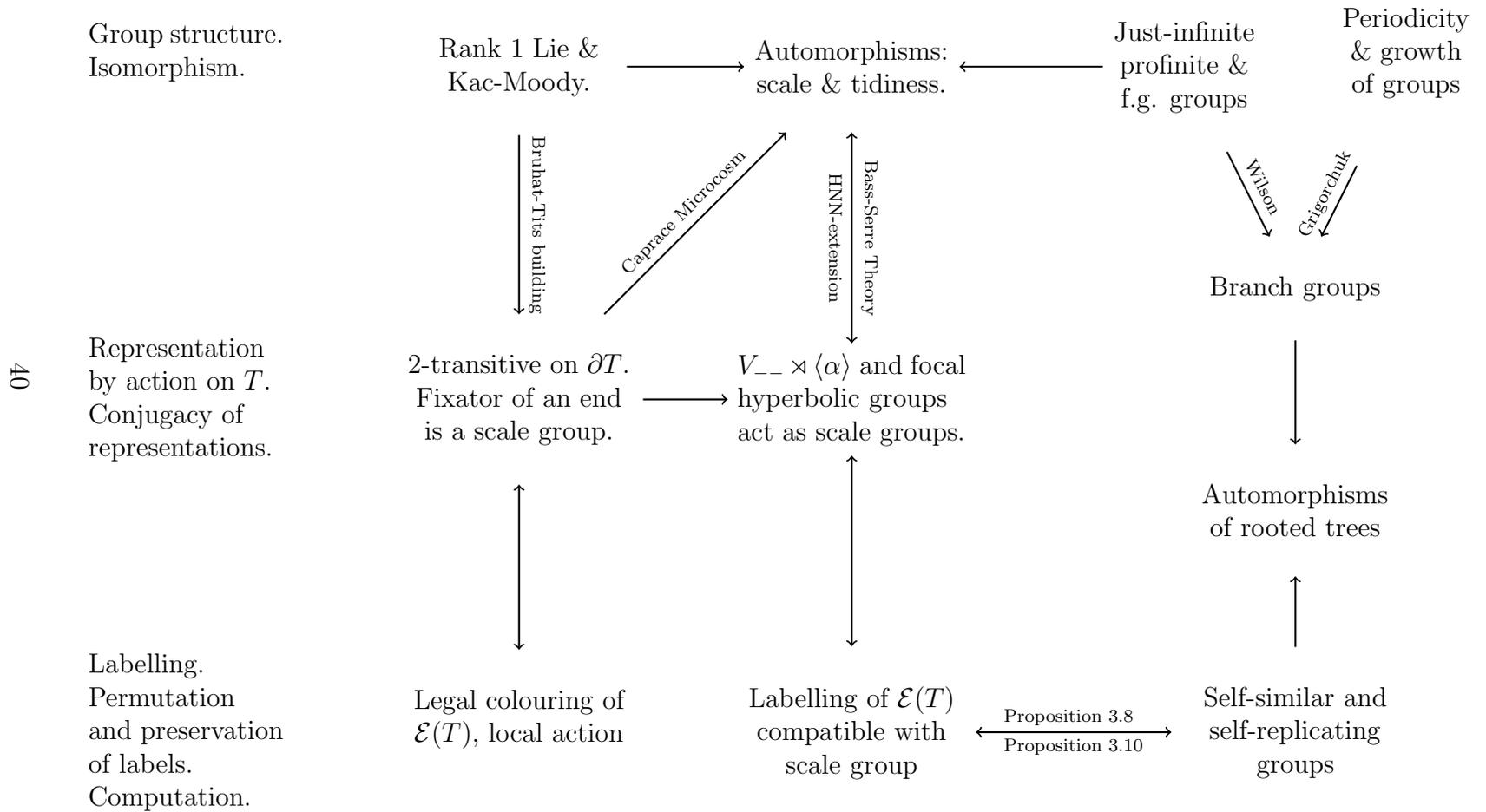
\begin{sidewaysfigure}[h]
\label{fig:concept_relations}
\caption{Scale groups in relation to TDLC groups and self-replicating groups}
\begin{tikzpicture}[node distance = 1.7cm, thick, black]
\node (A)  {\parbox{3cm}{Group structure.\\ Isomorphism.\\ \phantom{XXX}}};
\node (B) [right of = A] {};
\node (BB) [right of = B] {};
\node (C) [right of = BB]  {\parbox{3cm}{\begin{center}Rank 1 Lie \& \\ Kac-Moody.  \end{center}}};
\node (D) [right of = C] {};
\node (DD) [right of = D] {};
\node (E) [right of = DD] {\parbox{3cm}{\begin{center} Automorphisms:\\ scale \& tidiness.  \end{center}}};
\node (F) [right of = E] {};
\node (FF) [right of = F] {};
\node (G) [right of = FF] {\parbox{2.2cm}{\begin{center}Just-infinite profinite \&
		f.g. groups\end{center}}};
\node (H) [right of = G] {};
\node (I) [right of = H] {\parbox{2cm}{\begin{center}Periodicity \\
		\& growth\\ of groups\\ \phantom{XXX}\end{center}}};
\node (2A) [below of=A] {};
\node (22A) [below of=2A] {};
\node (222A) [below of=22A] {\parbox{3cm}{Representation by action on $\tree$.\\ Conjugacy of\\ representations.}};
\node (2B) [right of = 222A] {};
\node (2BB) [right of = 2B] {};
\node (2C) [right of = 2BB]  {\parbox{3.5cm}{\begin{center}2-transitive on $\partial\tree$. \\
		Fixator of an end is a scale group. \end{center}}};
\node (2D) [right of = 2C] {};
\node (2DD) [right of = 2D] {};
\node (2E) [right of = 2DD] {\parbox{3.5cm}{$V_{--}\rtimes\langle\alpha\rangle$ and focal hyperbolic groups\\ act as scale groups.}};
\node (2H) [below of = H] {};
\node (2G) [below of = 2H] {\parbox{3cm}{\begin{center}Branch groups\end{center}}};
\node (22G) [below of = 2G] {};
\node (222G) [below of = 22G] {\parbox{3cm}{\begin{center}Automorphisms of rooted trees\end{center}}};
\node (3A) [below of=222A] {};
\node (33A) [below of=3A] {};
\node (333A) [below of=33A] {\parbox{3cm}{Labelling.\\ Permutation\\ and preservation\\ of labels.\\ Computation.}};
\node (3C) [below of=2C] {};
\node (33C) [below of=3C] {};
\node (333C) [below of=33C] {\parbox{3.5cm}{\begin{center}Legal colouring of $\gr{E}(\tree)$, local action\\\phantom{Fill a lineXXX}  \end{center}}};
\node (3E) [below of=2E] {};
\node (3EE) [below of=3E] {};
\node (3EEE) [below of=3EE] {\parbox{3.5cm}{\begin{center} Labelling of $\gr{E}(\tree)$\\ compatible with scale group\end{center}}};
\node (3F) [right of=3EEE] {};
\node (3FF) [right of=3F] {};
\node (3FFF) [right of=3FF] {};
\node (3G) [right of=3FFF] {\parbox{3.5cm}{\begin{center}Self-similar and self-replicating groups\end{center}}};

\draw[->] (C) -- (E);
\draw[->] (G) -- (E);
\draw[->] (C) -- node[right]{\rotatebox{270}{\scriptsize Bruhat-Tits building}} (2C);
\draw[->] (2C) -- node{\rotatebox{45}{\raisebox{15pt}{\scriptsize Caprace Microcosm}}} (E);
\draw[->] (2C) -- (2E);
\draw[<->] (E) -- node[right]{\rotatebox{270}{\scriptsize Bass-Serre Theory}}node[left]{\rotatebox{270}{\scriptsize HNN-extension}}(2E);
\draw[<->] (2C) -- (333C);
\draw[<->] (2E) -- (3EEE);
\draw[<->] (3EEE) -- node{\raisebox{15pt}{\scriptsize Proposition~\ref{prop:s.s.&s.r.}\phantom{0}}} node{\raisebox{-18pt}{\scriptsize Proposition~\ref{prop:s.s.&s.r.2}}} (3G);
\draw[->] (3G) -- (222G);
\draw[->] (2G) -- (222G);
\draw[->] (G) -- node{\rotatebox{297}{\raisebox{15pt}{\scriptsize Wilson}}}(2G);
\draw[->] (I) -- node{\rotatebox{63}{\raisebox{15pt}{\scriptsize Grigorchuk}}}(2G);
\end{tikzpicture}

\end{sidewaysfigure}



\section{Scale groups and self-replicating groups}
\label{sec:scalegp}

This section defines scale groups and derives their basic properties. What it means for a group to be self-replicating is recalled from~\cite{Nekrash}, and labellings of trees that will be used in later sections are defined. 

 
\subsection{Scale groups}
\label{sec:scale}

\begin{definition}
\label{defn:scalegp}
Let $\treeq$ be a regular tree with valency $q+1$ and choose an end $\omega\in \partial\treeq$. A closed subgroup, $P$, of $\Isom(\treeq)$ that fixes $\omega$ and is transitive on $\gr{V}\treeq$ is called a \emph{scale group}. The set of scale subgroups of $\Isom(\treeq)_\omega$ is denoted $\inftran$. 
\end{definition}

The end $\omega\in\partial\treeq$ will be fixed throughout. The following notation will be useful. For each vertex $\vtx{v}\in \gr{V}\treeq$:
\begin{itemize}
    \item $\gr{E}_\vtx{v}$ is the set of $q+1$ directed edges $\left\{(\vtx{v},\vtx{w})\in \gr{E}\treeq\right\}$; 
\item the vertex adjacent to $\vtx{v}$ on the ray $(\omega,\vtx{v}]$ is denoted by $\vtx{v}^+$; 
\item $\overrightarrow{\gr{N}}(\vtx{v})$ denotes the set of $q$ neighbours of $\vtx{v}$ other than $\vtx{v}^+$; and
\item $\tree_\vtx{v}$ is the component of $\treeq$ formed by deleting the edge $\{\vtx{v},\vtx{v}^+\}$ but keeping $\vtx{v}$. 
\end{itemize}

It is shown in \cite[Proposition~3.2]{Tits} that there are three types of automorphism of a tree: those that fix a vertex; those that interchange two adjacent vertices; and those that translate an infinite path. An automorphism that fixes an end of the tree cannot be of the second type and so, for scale groups, all automorphisms either fix a vertex~$\vtx{v}$, and then fix every vertex on the ray $[\vtx{v},\omega)$, or translate a path that has $\omega$ as either a repelling end or an attracting end. Automorphisms that fix a vertex are called \emph{elliptic} and those that translate a path are called \emph{hyperbolic}. Denote the set of elliptic elements in the group $G$ by $G^{(e)}$.

The following is a well known and easily proved fact about groups of tree automorphisms that fix an end. 
\begin{proposition}
\label{prop:semidirect}
Let $G$ be a closed subgroup of $\Isom(\treeq)$ that fixes the end~$\omega$. Then $G^{(e)}$ is a closed subgroup of~$G$ and either $G = G^{(e)}$ or there is a translation $x\in G$ with $\omega$ as a repelling end such that $G = G^{(e)}\rtimes \langle x\rangle$. \endproof
\end{proposition}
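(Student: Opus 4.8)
The plan is to first show that $G^{(e)}$ is a subgroup, then that it is closed, and finally to produce the translation $x$ and verify the semidirect product decomposition. For the subgroup claim, note that by the trichotomy from \cite[Proposition~3.2]{Tits} recalled above, every element of $G$ fixing $\omega$ is either elliptic (fixes a vertex, hence fixes the whole ray $[\vtx{v},\omega)$ pointwise from some vertex on) or hyperbolic (translates a path with $\omega$ as one of its ends). The cleanest way to organise the argument is via the \emph{Busemann function} (or \emph{horocycle index} / ``level'' function) $\beta:\gr{V}(\treeq)\to\Z$ relative to $\omega$: fix a base vertex $\vtx{v}_0$ and set $\beta(\vtx{w})$ to be the signed distance so that $\beta(\vtx{w}^+) = \beta(\vtx{w}) - 1$. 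Every $g\in G$ fixing $\omega$ induces a translation on levels, i.e. there is an integer $t(g)$ with $\beta(g.\vtx{w}) = \beta(\vtx{w}) + t(g)$ for all $\vtx{w}$; the map $t:G\to\Z$ is a group homomorphism, and $g$ is elliptic if and only if $t(g)=0$. Hence $G^{(e)} = \ker(t)$, which is immediately a normal subgroup of $G$.

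Next I would check $G^{(e)}$ is closed. Since the $G$-action on $\treeq$ is continuous and $\treeq$ is locally finite, the level-translation homomorphism $t$ is continuous when $\Z$ carries the discrete topology: for a fixed vertex $\vtx{w}$ the map $g\mapsto g.\vtx{w}$ is locally constant, so $g\mapsto\beta(g.\vtx{w}) - \beta(\vtx{w}) = t(g)$ is locally constant, hence continuous. Therefore $G^{(e)} = t^{-1}(\{0\})$ is open and closed in $G$. (Alternatively one can argue directly: a net $g_i\to g$ in $G$ with each $g_i$ elliptic eventually agrees with $g$ on any fixed finite set of vertices, so $g$ fixes a vertex too.)

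Now suppose $G\ne G^{(e)}$. Then $t(G)$ is a nontrivial subgroup of $\Z$, hence $t(G) = m\Z$ for some $m\geq 1$, and there is $x_0\in G$ with $t(x_0) = m > 0$. Such an $x_0$ is hyperbolic and, being of positive level-translation, moves vertices towards $\omega$; its translation axis is a bi-infinite path having $\omega$ as its repelling end (equivalently, $\omega$ is an attracting end for $x_0^{-1}$). Set $x := x_0$. Then $\langle x\rangle \cap G^{(e)} = \{\id\}$ because $t(x^n) = nm \ne 0$ for $n\ne 0$, and $G = G^{(e)}\langle x\rangle$ because for any $g\in G$ we have $t(g) = km$ for some $k\in\Z$, whence $t(g x^{-k}) = 0$, i.e. $g x^{-k}\in G^{(e)}$ and $g = (g x^{-k})x^k$. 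Since $G^{(e)}$ is normal, this exhibits $G = G^{(e)}\rtimes\langle x\rangle$, with $x$ a translation having $\omega$ as a repelling end, as required.

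The only genuinely delicate point is the geometric claim that an element $x_0$ with $t(x_0) > 0$ really is a translation along a path with $\omega$ as its \emph{repelling} end — i.e. that it is hyperbolic rather than elliptic or edge-inverting, and that the sign of $t$ matches the direction of translation. Edge inversions are excluded a priori for end-fixing automorphisms (an inversion swaps the two half-trees across an edge and cannot fix an end), and an elliptic element has $t = 0$ by definition of $\beta$, so $x_0$ must be hyperbolic; its axis is then the unique $x_0$-invariant bi-infinite path, and since $x_0$ fixes $\omega$ the axis must have $\omega$ as one of its two boundary ends. That $\omega$ is the repelling end (points near $\omega$ move away from $\omega$ under iteration of $x_0$) is exactly the statement that $x_0$ increases the level function, which is how $x_0$ was chosen. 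I would cite \cite[Proposition~3.2]{Tits} for the trichotomy and keep the Busemann-function bookkeeping as the main device, as it makes both the homomorphism property and the direction-of-translation statement transparent.
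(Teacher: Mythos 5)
Your argument is correct and is the standard proof of this statement; the paper itself gives no proof at all (it states the proposition as a well-known fact), and your Busemann-level homomorphism $t:G\to\Z$ with $G^{(e)}=\ker t$, continuity of $t$ by local constancy, and the splitting $g=(gx^{-k})x^k$ for a generator of $t(G)$ is exactly the expected argument. One wording slip should be fixed: with your convention $\beta(\vtx{w}^+)=\beta(\vtx{w})-1$ (so $\beta$ decreases towards $\omega$), an element $x_0$ with $t(x_0)=m>0$ moves vertices \emph{away} from $\omega$, not ``towards $\omega$'' as written in the middle paragraph; that phrase contradicts both your own convention and the (correct) conclusion you draw in the same sentence and again in the final paragraph, namely that $\omega$ is the repelling end of $x_0$. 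Also, the parenthetical alternative closedness argument (``a net of elliptic $g_i\to g$ eventually agrees with $g$ on finite sets, so $g$ fixes a vertex'') is glibber than it looks, since the fixed vertices of the $g_i$ could a priori escape to infinity; your main argument via $t^{-1}(\{0\})$ is complete, so simply drop or expand the aside.
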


Scale groups always contain a translation, $x$, by distance~$1$ and have the form $P =  P^{(e)}\rtimes\langle  x\rangle$, as the next results show.
\begin{proposition}
\label{prop:scale_gp_props}
Let $P\leq \Isom(\treeq)_\omega$ be a scale group and $\vtx{v}\in\gr{V}\treeq$. Suppose that $V = P_{\vtx{v}}$. Then there is $x\in P$ such that $xVx^{-1}<V$ and $[V : xVx^{-1}]=q$. For any such $x$, $P^{(e)} = \bigcup_{n\in\mathbb{Z}} x^nVx^{-n}$ and $P =  P^{(e)}\rtimes\langle  x\rangle$.
\end{proposition}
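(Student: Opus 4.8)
The plan is to exploit the end-fixing structure together with vertex-transitivity. First I would locate the translation $x$. Since $P$ fixes $\omega$ and is vertex-transitive, pick the vertex $\vtx{v}^+$ adjacent to $\vtx{v}$ toward $\omega$; by transitivity there is $g\in P$ with $g.\vtx{v}^+=\vtx{v}$. I claim such a $g$ (or a suitable power times an elliptic correction) can be taken to be a translation by distance $1$ with $\omega$ repelling: the element $g$ moves $\vtx{v}^+$ to $\vtx{v}$, hence moves the ray $[\vtx{v}^+,\omega)$ to $[\vtx{v},\omega)$, so $g$ cannot be elliptic (an elliptic element fixing $\omega$ fixes a whole ray to $\omega$ and hence would fix the direction toward $\omega$ at every vertex it moves along). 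By Proposition~\ref{prop:semidirect}, $g$ is then hyperbolic with $\omega$ repelling; adjusting by an element of the stabiliser of the translation axis if needed, one gets $x\in P$ translating by distance exactly $1$ with $\omega$ repelling and $x.\vtx{v}^+=\vtx{v}$, equivalently $x^{-1}.\vtx{v}=\vtx{v}^+$.

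Next, the inclusion $xVx^{-1}\le V$ and the index computation. An element $h$ fixes $\vtx{v}$ iff $xhx^{-1}$ fixes $x.\vtx{v}$; but $x.\vtx{v}$ lies on the ray $[\vtx{v},\omega)$ one step closer to $\omega$, and any isometry fixing a vertex $\vtx{w}$ on $[\vtx{v},\omega)$ and fixing $\omega$ automatically fixes every vertex between $\vtx{v}$ and $\omega$, in particular fixes $\vtx{v}$ — wait, more carefully: $xVx^{-1}=\stab_P(x.\vtx{v})$, and since $x.\vtx{v}$ is the neighbour $\vtx{v}^-$ of $\vtx{v}$ away from $\omega$ is wrong; rather $x.\vtx{v}$ is toward $\omega$, i.e.\ $x.\vtx{v}=\vtx{v}^+$ is false since $x$ translates, so $x.\vtx{v}$ is the vertex with $(x.\vtx{v})=\vtx{v}^{++}$-type position; in any case $x.\vtx{v}$ lies strictly between $\vtx{v}$ and $\omega$, so fixing $x.\vtx{v}$ forces fixing everything from $x.\vtx{v}$ toward $\omega$, but not a priori $\vtx{v}$ itself. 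The correct statement is the reverse: $\stab_P(\vtx{v})\subseteq\stab_P(x.\vtx{v})$ would give $V\le xVx^{-1}$. So I should instead take $x$ with $\omega$ \emph{attracting}, i.e.\ with $x.\vtx{v}$ further from $\omega$ than $\vtx{v}$; then fixing $\vtx{v}$ implies fixing the vertex $\vtx{v}^+$ between $\vtx{v}$ and $\omega$ and hence — no. Let me restate cleanly: choose $x\in P$ translating by $1$ with $x^{-1}.\vtx{v}=\vtx{v}^+$ (exists by transitivity as above). Then $\stab_P(\vtx{v}^+)\supseteq\stab_P(\vtx{v})$? No: fixing $\vtx{v}$ does not fix $\vtx{v}^+$. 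But fixing $\vtx{v}^+$ together with $\omega$ does fix the edge $\{\vtx{v}^+,\vtx{v}^{++}\}$ and all of $[\vtx{v}^+,\omega)$; it permutes the $q$ vertices of $\overrightarrow{\gr{N}}(\vtx{v}^+)$, one of which is $\vtx{v}$. Hence $\stab_P(\vtx{v}^+)$ acts on $\overrightarrow{\gr{N}}(\vtx{v}^+)$ and the stabiliser of $\vtx{v}$ in it is $\stab_P(\{\vtx{v},\vtx{v}^+\})=\stab_P(\vtx{v})\cap\stab_P(\vtx{v}^+)=\stab_P(\vtx{v})$ (since fixing $\vtx{v}$ forces fixing $\vtx{v}^+=$ the neighbour toward $\omega$). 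Conjugating by $x$: $x\stab_P(\vtx{v})x^{-1}=\stab_P(x.\vtx{v})$; choosing the orientation so that $x.\vtx{v}$ sits at $\vtx{v}^+$'s position relative to $x.\vtx{v}^+$ — i.e.\ taking $x$ so that $x.\vtx{v}^+=\vtx{v}$, which is the repelling case — yields $x\stab_P(\vtx{v})x^{-1}=\stab_P(x.\vtx{v})$ with $x.\vtx{v}\in\overrightarrow{\gr{N}}(\vtx{v})$, so this is $\stab_P(\{\vtx{v},x.\vtx{v}\})\subseteq\stab_P(\vtx{v})$ after noting fixing the child $x.\vtx{v}$ forces fixing its parent $\vtx{v}$. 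Thus $xVx^{-1}\le V$, and $[V:xVx^{-1}]=[\stab_P(\vtx{v}):\stab_P(\vtx{v})\cap\stab_P(x.\vtx{v})]$ equals the size of the $V$-orbit of $x.\vtx{v}$ in $\overrightarrow{\gr{N}}(\vtx{v})$. Since $P$ is vertex-transitive and fixes $\omega$, $V=\stab_P(\vtx{v})$ is transitive on $\overrightarrow{\gr{N}}(\vtx{v})$ (any two children of $\vtx{v}$ are carried to one another by some $p\in P$, and composing with a translation back one sees $p$ may be taken in $V$ — this uses transitivity of $P$ on the vertices at a fixed distance with the same parent, a standard consequence of vertex-transitivity plus end-fixing); hence the orbit has size $q$ and $[V:xVx^{-1}]=q$. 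The main obstacle here is justifying that $V$ acts transitively on $\overrightarrow{\gr{N}}(\vtx{v})$; I expect to get it by: given two children $\vtx{w}_1,\vtx{w}_2$ of $\vtx{v}$, vertex-transitivity gives $p\in P$ with $p.\vtx{w}_1=\vtx{w}_2$, then $p$ fixes $\omega$ so maps $[\vtx{w}_1,\omega)$ to $[\vtx{w}_2,\omega)$, hence $p.\vtx{v}=p.\vtx{w}_1^+=\vtx{w}_2^+=\vtx{v}$, so $p\in V$ already.

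Finally, $\bigcup_{n\in\Z}x^nVx^{-n}=P^{(e)}$. The inclusion $\subseteq$ is clear since each $x^nVx^{-n}=\stab_P(x^n.\vtx{v})$ consists of elliptic elements. For $\supseteq$, let $g\in P^{(e)}$; then $g$ fixes some vertex $\vtx{w}$, hence fixes the ray $[\vtx{w},\omega)$. By vertex-transitivity choose $h\in P$ with $h.\vtx{v}=\vtx{w}$; then $h^{-1}gh\in\stab_P(\vtx{v})=V$, but I want $g$ itself conjugate into $V$ by a power of $x$. The point is that $\{x^n.\vtx{v}:n\in\Z\}$ is exactly the set of vertices on the axis of $x$, and every vertex $\vtx{w}$ lies on $[\vtx{w},\omega)$ which eventually meets this axis: since $x$ translates along a bi-infinite path through $\vtx{v}$ with $\omega$ as one end, the ray $[\vtx{w},\omega)$ and the axis both end at $\omega$, so they share an infinite tail; let $\vtx{v}_m=x^m.\vtx{v}$ be a vertex on that common tail. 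Then $g$ fixes $\vtx{v}_m$ (it's on $[\vtx{w},\omega)$ beyond the meeting point), so $g\in\stab_P(\vtx{v}_m)=x^mVx^{-m}$. This finishes the proof. The one subtlety is ensuring $[\vtx{w},\omega)$ actually meets the axis along a tail rather than merely sharing the end abstractly — but in a tree, two rays in the same end necessarily agree on a cofinite sub-ray, which is immediate from the definition of end, so no difficulty arises.
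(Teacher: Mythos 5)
Your proposal, once the mid-stream corrections settle, is correct and follows essentially the same route as the paper: take $x\in P$ carrying $\vtx{v}$ to one of its children, observe $xVx^{-1}=\stab_P(x.\vtx{v})<V$ since fixing a child and $\omega$ forces fixing the parent, get the index $q$ from the orbit–stabiliser theorem after checking $V$ is transitive on $\overrightarrow{\gr{N}}(\vtx{v})$ (by the same "any $p\in P$ sending one child to another must fix $\vtx{v}$" argument), and prove $\bigcup_{n\in\Z}x^nVx^{-n}=P^{(e)}$ by intersecting the fixed ray of an elliptic element with $[\vtx{v},\omega)$, whose vertices are the $x^{-m}.\vtx{v}$. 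The only difference is presentational: the paper chooses $x$ directly by $x.\vtx{v}=\vtx{w}$ for a chosen child $\vtx{w}$, avoiding your detour through $x.\vtx{v}^+=\vtx{v}$ and the discussion of translation length and axes.
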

\begin{proof}
Let $\vtx{w}\in \overrightarrow{\gr{N}}(\vtx{v})$ and let $x\in P$ be such that $x.\vtx{v} = \vtx{w}$. Then $xVx^{-1} = P_{\vtx{w}}$, which is a proper subgroup of $V$ because $P_{\vtx{w}}$ fixes every vertex on the ray $[\vtx{w},\omega)$. Moreover, $V.\vtx{w}$ is contained in $\overrightarrow{\gr{N}}(\vtx{v})$, and is equal to this set because, for each $\vtx{u}\in \overrightarrow{\gr{N}}(\vtx{v})$, there is $y\in P$ with $y.\vtx{w}=\vtx{u}$ and any such $y$ must belong to $V$. Hence, by the Orbit-Stabiliser Theorem, $[V : xVx^{-1}]=|\overrightarrow{\gr{N}}(\vtx{v})| = q$.

For the second claim, consider $h\in P^{(e)}$. Then $h$ fixes a ray, $[\vtx{u},\omega)$ and $[\vtx{u},\omega)\cap [\vtx{v},\omega)$ is a ray, $[\vtx{v}',\omega)$ say. Since $\vtx{v}'\in [\vtx{v},\omega)$, there is $m\in\mathbb{Z}$ such that $x^{-m}.\vtx{v} = \vtx{v}'$. Then $x^{-m}Vx^m = P_{\vtx{v}'}$, to which $h$ belongs. 
\end{proof}

The next result applies to semiregular trees, that is, trees whose vertex set partitions into two subsets, $\gr{V}_1$ and $\gr{V}_2$, such that all vertices in each subset have the same number of neighbours, all belonging to the other subset. Although only needed for regular trees here, the greater generality might be useful in other contexts. It may be compared with~\cite[Lemma~2.2]{Radu}.
\begin{proposition}
\label{prop:transitive_on_ends}
Let $\tree$ be a semiregular tree and suppose that $G\leq \Isom(\tree)$ is closed, transitive on each of the vertex sets $\gr{V}_1,\ \gr{V}_2$ with $\gr{V}_1\cup\gr{V}_2=\gr{V}\tree$, and fixes an end, $\omega\in\partial\tree$. Then $G^{(e)}$ is transitive on $\partial\tree\setminus\{\omega\}$. 

This includes the case of regular trees on which $G$ is vertex-transitive and so, in particular, $P^{(e)}$ acts transitively on $\partial\treeq\setminus\{\omega\}$ for every $P$ in $\inftran$.  
\end{proposition}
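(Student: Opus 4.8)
The plan is to show that $G^{(e)}$ acts transitively on $\partial\tree\setminus\{\omega\}$ by reducing the problem to moving one vertex of a ray representing an end onto the corresponding vertex of another, using vertex-transitivity, and then correcting the resulting map so that it fixes $\omega$. Fix a base vertex $\vtx{o}\in\gr{V}_1$ and consider the ray $[\vtx{o},\omega)=(\vtx{o}=\vtx{o}_0,\vtx{o}_1,\vtx{o}_2,\dots)$. Given any two ends $\eta_1,\eta_2\in\partial\tree\setminus\{\omega\}$, each $\eta_i$ together with $\omega$ determines a bi-infinite geodesic line in $\tree$; let $\vtx{a}_i$ be the vertex on this line that is closest to $\vtx{o}$ (equivalently, the confluent vertex where $[\vtx{o},\omega)$ meets the line from $\eta_i$ to $\omega$), and let $\vtx{b}_i$ be the neighbour of $\vtx{a}_i$ lying on the ray $[\vtx{a}_i,\eta_i)$, so $\vtx{b}_i\in\overrightarrow{\gr{N}}(\vtx{a}_i)$. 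The key geometric observation is that an element $g\in G$ fixing $\omega$ sends $\eta_i$ to $\eta_j$ if and only if it sends enough of the ray $[\vtx{a}_i,\eta_i)$ into $[\vtx{a}_j,\eta_j)$; in particular it suffices to find $g\in G^{(e)}$ with $g.\vtx{a}_1=\vtx{a}_2$ and $g.\vtx{b}_1=\vtx{b}_2$, and then, if necessary, compose with an element of $\stab_G(\vtx{a}_2)\cap\stab_G(\vtx{b}_2)$ to walk the image ray all the way out to $\eta_2$ — but actually, once the first two vertices match up correctly, one argues by a standard back-and-forth / compactness argument that the full ray can be matched.

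First I would handle the case $\vtx{a}_1=\vtx{a}_2=:\vtx{a}$: I claim $\stab_G(\vtx{a})$ (which consists of elliptic elements since it fixes $[\vtx{a},\omega)$, hence fixes $\omega$) acts transitively on $\overrightarrow{\gr{N}}(\vtx{a})$, and more generally transitively on the set of ends through $\vtx{a}$ other than $\omega$. The transitivity of $\stab_G(\vtx{a})$ on $\overrightarrow{\gr{N}}(\vtx{a})$ follows exactly as in the proof of Proposition~\ref{prop:scale_gp_props}: for $\vtx{u},\vtx{u}'\in\overrightarrow{\gr{N}}(\vtx{a})$ choose $y\in G$ with $y.\vtx{u}=\vtx{u}'$; since $\vtx{a}$ is the unique neighbour of both $\vtx{u}$ and $\vtx{u}'$ in the direction of $\omega$, the element $y$ must fix $\vtx{a}$. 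Iterating this down the tree (at each vertex of the subtree $\tree_\vtx{a}$ the stabiliser in $G$ of the ray back to $\omega$ acts transitively on the forward neighbours, using vertex-transitivity on the relevant $\gr{V}_i$), one builds, by a limit argument exploiting the compactness of $\stab_G(\vtx{a})$ together with closedness of $G$, an element of $\stab_G(\vtx{a})\subseteq G^{(e)}$ sending $\eta_1$ to $\eta_2$.

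For the general case, I first move $\vtx{a}_1$ to $\vtx{a}_2$. Note $\vtx{a}_1$ and $\vtx{a}_2$ both lie on the ray $[\vtx{o},\omega)$ (each is the confluence point of that ray with a line to $\omega$), hence they lie in a common $\gr{V}_i$ precisely when they are at even distance; in any case, by Proposition~\ref{prop:semidirect} applied to $G$, there is a translation $x\in G$ toward $\omega$ with $\omega$ repelling, and a suitable power $x^k$ together with a vertex-transitivity correction sends $\vtx{a}_1$ to $\vtx{a}_2$ — crucially one can choose the correcting element to fix $\omega$ because any $g\in G$ with $g.\vtx{a}_1=\vtx{a}_2$ and both $\vtx{a}_i\in[\vtx{o},\omega)$ automatically sends the ray $[\vtx{a}_1,\omega)$ to the ray $[\vtx{a}_2,\omega)$, and these are cofinal, so $g.\omega=\omega$. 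Thus after applying such $g\in G^{(e)}$ we are reduced to $\vtx{a}_1=\vtx{a}_2$, which was handled above; composing the two elliptic elements gives the desired element of $G^{(e)}$ carrying $\eta_1$ to $\eta_2$. The final assertion about $P^{(e)}$ is just the specialisation to $\tree=\treeq$ with $\gr{V}_1=\gr{V}_2=\gr{V}(\treeq)$.

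I expect the main obstacle to be the limit/compactness argument in the case $\vtx{a}_1=\vtx{a}_2$: one must check carefully that the successive choices of elliptic elements matching longer and longer initial segments of the two rays can be taken to form a convergent net (or that their "partial" choices are consistent), and that the limit lies in the closed group $G$ and actually fixes $\omega$ and sends $\eta_1$ to $\eta_2$. This is routine given that $\stab_G(\vtx{a})$ is compact and the topology on $\Isom(\treeq)$ is that of pointwise convergence on the (countable, discrete) vertex set, but it is the one place where the hypothesis that $G$ is \emph{closed} is genuinely used and so deserves to be spelled out.
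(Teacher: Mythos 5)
Your first case (both ends branching off $[\vtx{o},\omega)$ at the same vertex $\vtx{a}$) is sound and is essentially the paper's argument: match the two rays vertex by vertex inside $\stab_G(\vtx{a})$, using that any element of $G$ carrying one forward neighbour of a vertex to another must fix that vertex and the whole ray back to $\omega$ (vertex-transitivity on the relevant part of the bipartition supplies the element, since equidistant vertices lie in the same $\gr{V}_i$), and then pass to an accumulation point using compactness of $\stab_G(\vtx{a})$ and closedness of $G$.

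The genuine gap is in your ``general case'' $\vtx{a}_1\neq\vtx{a}_2$. If $\vtx{a}_1$ and $\vtx{a}_2$ lie at different distances from $\vtx{o}$ along $[\vtx{o},\omega)$, they lie in different horospheres centred at $\omega$. Every elliptic element of $\Isom(\tree)_\omega$ fixes a ray into $\omega$ and therefore preserves each horosphere, so \emph{no} element of $G^{(e)}$ can send $\vtx{a}_1$ to $\vtx{a}_2$; the element you produce (a power of a translation times a correction) is hyperbolic, and composing it with the elliptic element from your first case still shifts horospheres, hence is hyperbolic. As written, your argument therefore only shows that $G$, not $G^{(e)}$, is transitive on $\partial\tree\setminus\{\omega\}$, which is weaker than the statement. (There is also a secondary issue: after applying $g$, the branch point of $g.\eta_1$ off $[\vtx{o},\omega)$ need not be $\vtx{a}_2$, since $g$ does not preserve $[\vtx{o},\omega)$ below the level where it merges with $[g.\vtx{o},\omega)$.) The fix is to drop the auxiliary basepoint $\vtx{o}$ altogether: the two lines $(\eta_1,\omega)$ and $(\eta_2,\omega)$ intersect in a ray $[\vtx{v},\omega)$, so both ends branch at the \emph{same} vertex $\vtx{v}$, your first case applies verbatim with $\vtx{a}=\vtx{v}$, and the whole construction takes place in the compact stabiliser $\stab_G(\vtx{v})\subseteq G^{(e)}$. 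This is exactly the choice made in the paper's proof, where the spurious ``general case'' never arises.
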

\begin{proof}
Let $\omega_1\ne\omega_2\in \partial\treeq\setminus\{\omega\}$. Then $(\omega_1,\omega)\cap (\omega_2,\omega) = [\vtx{v},\omega)$ for some vertex $\vtx{v}\in \gr{V}\treeq$. For each $n\geq1$ let $\vtx{v}_n$ and $\vtx{w}_n$ be the vertices on $(\omega_1,\vtx{v}]$ and $(\omega_2,\vtx{v}]$ respectively at distance $n$ from $\vtx{v}$. Then, since $\vtx{v}_n$ and $\vtx{w}_n$ are equidistant from $\vtx{v}$, they lie in the same set, $\gr{V}_1$ or $\gr{V}_2$, of vertices. Transitivity of the action of $G$ on $\gr{V}_1$ and $\gr{V}_2$ then implies that there is, for each $n$, $x_n\in G$ such that $x_n.\vtx{v}_n = \vtx{w}_n$. 

Since each $x_n$ fixes $\omega$, it follows that $x_n.\vtx{v}=\vtx{v}$ for every $n$ and, moreover, that $x_n.\vtx{v}_j=\vtx{w}_j$ for every $1\leq j\leq n$. Hence the set $\{x_n\}_{n\geq1}$ is contained in the stabiliser, $G_{\vtx{v}}$, of $\vtx{v}$, which is compact, and therefore has an accumulation point, $x$ say. Then $x.\vtx{v}_n = \vtx{w}_n$ for every $n\geq1$ and it follows that $x.\omega_1 = \omega_2$. Since $x$ fixes $\vtx{v}$ it is elliptic, and we have shown that the elliptic subgroup of $G$ is transitive on $\partial\treeq$.
\end{proof}

\begin{corollary}
\label{cor:transitive_on_ends}
Suppose that $\omega,\omega'\in\partial \treeq$ and that $\{\vtx{v},\vtx{w}\}\in \gr{E}\treeq$ lies on the path $(\omega,\omega')$. Then, given a scale group $P\leq \Isom(\treeq)_\omega$, there is $x\in P$ satisfying $x.\omega = \omega$, $x.\omega' = \omega'$ and $x.\vtx{v} = \vtx{w}$.
\end{corollary}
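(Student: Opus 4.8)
The plan is to reduce Corollary~\ref{cor:transitive_on_ends} to Proposition~\ref{prop:transitive_on_ends} together with Proposition~\ref{prop:scale_gp_props}. First I would observe that since $\{\vtx{v},\vtx{w}\}$ lies on the bi-infinite path $(\omega,\omega')$ and $\omega\neq\omega'$, exactly one of $\vtx{v},\vtx{w}$ is closer to $\omega$; relabelling if necessary, assume $\vtx{v}$ lies between $\omega$ and $\vtx{w}$, so that $\vtx{w}\in\overrightarrow{\gr{N}}(\vtx{v})$ and $\omega'$ is an end of the subtree $\tree_\vtx{w}$. Because $P$ is vertex-transitive and fixes $\omega$, Proposition~\ref{prop:scale_gp_props} supplies a translation $x_0\in P$ with $x_0.\vtx{v}=\vtx{w}$, i.e.\ $x_0.\omega=\omega$ and $x_0$ carries $\vtx{v}$ to $\vtx{w}$. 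The only thing that may fail is $x_0.\omega'=\omega'$; the translate $x_0.\omega'$ is merely \emph{some} end in $\tree_\vtx{w}\setminus\{\omega\}$.

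Next I would correct this by composing with an elliptic element. By Proposition~\ref{prop:transitive_on_ends}, $P^{(e)}$ acts transitively on $\partial\treeq\setminus\{\omega\}$, so in particular the stabiliser $\stab_P(\vtx{w})$ — which is $\stab_P(\vtx{w})=x_0\stab_P(\vtx{v})x_0^{-1}$ and which consists of elliptic elements fixing the whole ray $[\vtx{w},\omega)$ — acts transitively on the set of ends passing through $\vtx{w}$ away from $\omega$, that is on $\partial\tree_\vtx{w}\setminus\{\omega\}$. (This transitivity of $\stab_P(\vtx{w})$ on $\partial\tree_\vtx{w}$ is exactly what the argument of Proposition~\ref{prop:transitive_on_ends} gives when applied inside the rooted subtree $\tree_\vtx{w}$, using vertex-transitivity of $P$ on each sphere about $\vtx{w}$; alternatively one can cite that proposition and intersect with $\stab_P(\vtx{w})$ after noting any element moving one such end to another while fixing $\omega$ must fix $\vtx{w}$.) Both $\omega'$ and $x_0.\omega'$ lie in $\partial\tree_\vtx{w}\setminus\{\omega\}$, so there is $h\in\stab_P(\vtx{w})$ with $h.(x_0.\omega')=\omega'$. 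Then $x:=h x_0$ satisfies $x.\omega=\omega$, $x.\vtx{v}=h.\vtx{w}=\vtx{w}$, and $x.\omega'=\omega'$, as required.

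The main obstacle is the step that uses transitivity of the \emph{elliptic} subgroup on ends through a prescribed vertex: Proposition~\ref{prop:transitive_on_ends} as stated gives transitivity of $G^{(e)}$ on all of $\partial\treeq\setminus\{\omega\}$, but to keep control of the vertex $\vtx{w}$ one needs an element of $P^{(e)}$ fixing $\vtx{w}$ and still moving the two relevant ends onto each other. I expect this to follow painlessly because any isometry fixing $\omega$ and moving one end of $\tree_\vtx{w}$ to another must fix every common vertex of the two ends, hence fix $\vtx{w}$; so the $x$ produced by Proposition~\ref{prop:transitive_on_ends} applied to the pair $(x_0.\omega',\omega')$ automatically lies in $\stab_P(\vtx{w})$. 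Thus the composition $hx_0$ does the job, and no further case analysis (such as the subcase $\vtx{w}$ between $\omega$ and $\vtx{v}$) is needed since it is handled by relabelling at the outset.
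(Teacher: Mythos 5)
Your strategy is the same as the paper's: produce $y\in P$ with $y.\vtx{v}=\vtx{w}$ (every element of $P$ already fixes $\omega$), then compose with an element fixing $\vtx{w}$ that returns $y.\omega'$ to $\omega'$, the latter extracted from Proposition~\ref{prop:transitive_on_ends}. The one real problem is the justification you give for why the correcting element may be taken in $\stab_P(\vtx{w})$. The blanket claim that any isometry fixing $\omega$ and moving one end of $\tree_{\vtx{w}}$ to another must fix the common vertices of the two ends, hence fix $\vtx{w}$, is false: any hyperbolic element of $\Isom(\treeq)_\omega$ with repelling end $\omega$ and with $\vtx{w}$ on its axis (for instance $\tilde{x}_0$ with $\vtx{w}=\tvtx{v}_0$) sends every end of $\tree_{\vtx{w}}$ other than its attracting end to another end of $\tree_{\vtx{w}}$, yet moves $\vtx{w}$. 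What makes your step correct is that Proposition~\ref{prop:transitive_on_ends} supplies an \emph{elliptic} element: an elliptic isometry fixing $\omega$ preserves horospheres, so if it maps an end whose ray to $\omega$ passes through $\vtx{w}$ to another such end, it must send $\vtx{w}$ to the vertex of the image ray lying in the same horosphere, namely $\vtx{w}$ itself. Equivalently --- and this is exactly how the paper argues --- the element constructed in the proof of Proposition~\ref{prop:transitive_on_ends} fixes the branch vertex of the two ends together with the whole ray from it towards $\omega$, which contains $\vtx{w}$. Insert ellipticity (or cite the proof rather than just the statement of Proposition~\ref{prop:transitive_on_ends}) and your argument coincides with the paper's.

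Two smaller remarks. The initial relabelling is unnecessary and, as written, incomplete: the required conclusion $x.\vtx{v}=\vtx{w}$ is not symmetric in $\vtx{v}$ and $\vtx{w}$, so if you swap their names you must pass to the inverse of the element you construct at the end; the paper needs no case split, since for any $y\in P$ with $y.\vtx{v}=\vtx{w}$ both $\omega'$ and $y.\omega'$ have rays to $\omega$ through $\vtx{w}$, whichever of $\vtx{v},\vtx{w}$ is closer to $\omega$. Also, plain vertex-transitivity from Definition~\ref{defn:scalegp} already yields $y$; the appeal to Proposition~\ref{prop:scale_gp_props} is harmless but not needed (any element of $\Isom(\treeq)_\omega$ moving $\vtx{v}$ to an adjacent vertex is automatically hyperbolic, since it cannot preserve horospheres).
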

\begin{proof}
There is $y\in P$ with $y.\omega = \omega$ and $y.\vtx{v}=\vtx{w}$ because $P$ is a scale group. Then the proof of Proposition~\ref{prop:transitive_on_ends} shows that there is $z\in P$ with $z.(y.\omega') = \omega'$ and $z.\vtx{w}=\vtx{w}$. Set $x = zy$.
\end{proof}

Proposition~\ref{prop:transitive_on_ends} has the following converse.
\begin{proposition}
\label{prop:implies_scale}
Let $G\leq \Isom(\tree)_\omega$ be closed. Suppose that $G^{(e)}$ is transitive on $\partial\tree\setminus\{\omega\}$ and that there are $\vtx{v}\in\gr{V}\treeq$ and $g\in G$ such that $\{\vtx{v},g.\vtx{v}\}\in\gr{E}\treeq$. Then $G$ is a scale group.
\end{proposition}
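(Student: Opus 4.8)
The plan is to notice that among the conditions in Definition~\ref{defn:scalegp} only vertex-transitivity of $G$ is not already assumed, and to obtain it by combining ``horizontal'' motion within a level set of the Busemann function (provided by the transitivity of $G^{(e)}$ on ends) with ``vertical'' motion between level sets (provided by the element $g$).

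First I would fix a ray $[\vtx{o},\omega)$, write $\vtx{o}_n$ for its $n$-th vertex, and introduce the Busemann function $\beta\colon\gr{V}(\treeq)\to\Z$, $\beta(\vtx{u})=\lim_{n\to\infty}\bigl(d(\vtx{u},\vtx{o}_n)-n\bigr)$, normalised so that $\beta(\vtx{u}^+)=\beta(\vtx{u})-1$; in particular adjacent vertices have $\beta$-values differing by exactly~$1$. Since every element of $G$ fixes $\omega$, each $f\in G$ shifts $\beta$ by a constant $c(f)\in\Z$, the map $f\mapsto c(f)$ is a homomorphism $G\to\Z$, and $c(f)=0$ precisely when $f$ is elliptic (for hyperbolic $f$ one has $c(f)=\pm(\text{translation length})$). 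Because $\{\vtx{v},g.\vtx{v}\}$ is an edge, $c(g)=\beta(g.\vtx{v})-\beta(\vtx{v})=\pm1$, so $f\mapsto c(f)$ is surjective; thus $g$ is a translation by distance~$1$ and $G=G^{(e)}\rtimes\langle g\rangle$, in agreement with Proposition~\ref{prop:semidirect}.

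The key step is the claim that \emph{if $\vtx{a},\vtx{b}\in\gr{V}(\treeq)$ satisfy $\beta(\vtx{a})=\beta(\vtx{b})$ then $h.\vtx{a}=\vtx{b}$ for some $h\in G^{(e)}$.} To prove it I would choose ends $\omega_a\in\partial\tree_\vtx{a}$ and $\omega_b\in\partial\tree_\vtx{b}$ (these exist because the rooted subtrees are infinite) and note that $\vtx{a}$ lies on the bi-infinite geodesic $(\omega_a,\omega)$, since any path from $\tree_\vtx{a}$ to $\omega$ must pass through $\vtx{a}$, the only vertex of $\tree_\vtx{a}$ joined to the rest of $\treeq$; likewise $\vtx{b}\in(\omega_b,\omega)$. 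On such a geodesic $\beta$ drops by~$1$ at each step towards $\omega$, so it takes each integer value exactly once, and hence $\vtx{a}$ is the unique vertex of $(\omega_a,\omega)$ at level $\beta(\vtx{a})$, and similarly for $\vtx{b}$ on $(\omega_b,\omega)$. By hypothesis $G^{(e)}$ is transitive on $\partial\treeq\setminus\{\omega\}$, so there is an elliptic $h\in G$ with $h.\omega_a=\omega_b$; as $h$ fixes $\omega$ as well it carries $(\omega_a,\omega)$ onto $(\omega_b,\omega)$, and as $h$ is elliptic it preserves $\beta$, so it must send the level-$\beta(\vtx{a})$ vertex $\vtx{a}$ of $(\omega_a,\omega)$ to the level-$\beta(\vtx{b})$ vertex $\vtx{b}$ of $(\omega_b,\omega)$.

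To conclude, let $\vtx{a},\vtx{b}\in\gr{V}(\treeq)$ be arbitrary. Surjectivity of $f\mapsto c(f)$ yields $n\in\Z$ with $\beta(g^n.\vtx{a})=\beta(\vtx{a})+n\,c(g)=\beta(\vtx{b})$, and the key step then gives $h\in G^{(e)}$ with $h.(g^n.\vtx{a})=\vtx{b}$, so $(hg^n).\vtx{a}=\vtx{b}$. Hence $G$ is transitive on $\gr{V}(\treeq)$ and, being closed and fixing $\omega$, is a scale group. I expect the main obstacle to lie in the key step, specifically in arguing cleanly that an elliptic element fixing $\omega$ and taking $\omega_a$ to $\omega_b$ is thereby forced to take $\vtx{a}$ to $\vtx{b}$; this rests on the observation that a geodesic ending at $\omega$ has a unique vertex at each prescribed $\beta$-level, while everything else amounts to routine bookkeeping with the Busemann function.
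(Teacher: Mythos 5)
Your proof is correct and takes essentially the same route as the paper: vertex-transitivity is obtained by composing a power of the length-one translation $g$ with an elliptic element supplied by the transitivity of $G^{(e)}$ on $\partial\tree\setminus\{\omega\}$, producing an element of the form $hg^n$. The only difference is packaging: the paper works directly with the axis $(\omega,\omega_1)$ of $g$ and pulls the target vertex onto it by $h^{-1}$, whereas you first set up the Busemann homomorphism $c$ and prove that $G^{(e)}$ is transitive on horospheres --- slightly longer bookkeeping of the same idea.
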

\begin{proof}
It must shown that $G$ is transitive on $\gr{V}\treeq$. Let $\vtx{v}\in\gr{V}\treeq$ and $g\in G$ be such that $\{\vtx{v},g.\vtx{v}\}\in\gr{E}\treeq$. Then $g$ is hyperbolic and there is $\omega_1\in\partial\tree\setminus\{\omega\}$ with $(\omega,\omega_1)$ the axis of translation for $g$. Consider $\vtx{w}\in\gr{V}\treeq$, let $\omega_2\in\partial\tree\setminus\{\omega\}$ be such that $\vtx{w}\in(\omega,\omega_2)$ and choose $h\in G^{(e)}$ such that $h.\omega_1 = \omega_2$. Then $h^{-1}.\vtx{w}\in (\omega,\omega_1)$ and there is $n\in\mathbb{Z}$ such that $g^n.\vtx{v} = h^{-1}.\vtx{w}$. Hence $\vtx{w} = (hg^n).\vtx{v}$ and $G$ is transitive on $\gr{V}\treeq$.
\end{proof}

\begin{example}
\label{examp:scale_gp}
The $(ax+b)$-group $\mathbb{Q}_p\rtimes \mathbb{Q}_p^\times$ acts on $\tree_{p+1}$ via the embedding $\mathbb{Q}_p\rtimes \mathbb{Q}_p^\times\hookrightarrow GL(2,\mathbb{Q}_p)$ given by
$$
(b,a) \mapsto \left(\begin{matrix}
a & b\\
0 & 1
\end{matrix}\right),\quad (a\in\mathbb{Q}_p^\times,\ b\in\mathbb{Q}_p)
$$
and the action of $GL(2,\mathbb{Q}_p)$ on its affine Bruhat-Tits tree~\cite[Chapter~II.1]{Serre}. The boundary of the Bruhat-Tits tree may be identified with the projective line $P^1(\mathbb{Q}_p)$ and $\mathbb{Q}_p\rtimes \mathbb{Q}_p^\times$ fixes the point with homogeneous coordinates $[1:0]$ under its action on the tree. Let $P\leq \Isom(\tree_{p+1})_{[1:0]}$ be the image of $\mathbb{Q}_p\rtimes \mathbb{Q}_p^\times$ under this action. Then $P$ is a scale group by Proposition~\ref{prop:implies_scale}: that $P$ is transitive on $\partial\tree_{p+1}\setminus \{[1:0]\}$ holds because we have that $[b,1] = (b,1).[0:1]$ for each $[b,1]\in\partial\tree_{p+1}\setminus \{[1:0]\}$; and $(0,p)$ is hyperbolic with translation distance~$1$.  

The group $\mathbb{F}_q(\!(t)\!)\rtimes \mathbb{F}_q(\!(t)\!)^\times$ acts on $\treeq$ in a similar fashion and its image under the action is another example of a scale group. 
\end{example}
There is an alternative, and well-known, construction of the action of $\mathbb{Q}_p\rtimes \mathbb{Q}_p^\times$ on a tree isomorphic to the Bruhat-Tits tree, and of the identification of the boundary of the tree with $P^1(\mathbb{Q}_p)$. This construction is given in Example~\ref{examp:padic} below, and is the model for a concrete description of $\treeq$ and the action of $\Isom(\treeq)_\omega$ that is the subject of \S\ref{sec:concrete}. Example~\ref{examp:scale_group} defines a particular class of scale groups in  terms of this construction of the tree that is inspired by the example of $\mathbb{F}_q(\!(t)\!)\rtimes \mathbb{F}_q(\!(t)\!)^\times$.


\subsection{Self-replicating groups}
\label{sec:s.r.}
Self-similar and self-replicating groups are groups of automorphisms of regular rooted trees, and their definitions involve representing the vertices of these trees as strings in an `alphabet', see~\cite[\S1.1]{Nekrash}. The alphabet is here taken to be the set of $q$ symbols $\{0,\dots, q-1\}$. The following definition and notation for the rooted tree $\treeqq$ do not match that in~\cite{Nekrash} exactly but are better suited to our context. 

\begin{numba}
\label{nota:treeqq}
Let $\gr{V} = \{0,\dots, q-1\}^*$ be the set of finite strings of symbols $0,\ \dots,\ q-1$ and $\gr{E} = \left\{ \{\vtx{w}, \vtx{w}j\}\in \gr{V}^2 \mid \vtx{w}\in \gr{V}, j\in \{0,\dots, q-1\}\right\}$. Then $\treeqq := (\gr{V},\gr{E})$ is a rooted tree whose root is the empty string and in which the vertex $\vtx{w}$ has the~$q$ children $\vtx{w}j$, $j\in\{0,\dots q-1\}$. Strings of length $n$ are said to be on \emph{level~$n$} of $\treeqq$. 
\end{numba}

\begin{numba}
\label{num:rooted_section}	For each $\vtx{w}\in \gr{V}\tree_{q,q}$, the subtree spanned by $\left\{\vtx{w}\vtx{v}\in  \gr{V}\tree_{q,q} \mid \vtx{v} \in \gr{V}\tree_{q,q}\right\}$ will be denoted by $\tree_\vtx{w}$. Then $\tree_\vtx{w}$ is isomorphic to $\tree_{q,q,}$ under the map
$$
\psi_\vtx{w}(\vtx{w}\vtx{v}) = \vtx{v} : \tree_\vtx{w}\to\tree_{q,q}.
$$ 
For any $\vtx{w}\in\gr{V}\treeq$, the map $\cdot|_{\vtx{w}} :  \Isom(\tree_{q,q}) \to \Isom(\tree_{q,q})$ defined by
$$
g|_{\vtx{w}} = \psi_{g.\vtx{w}}\circ g\circ \psi_\vtx{w}^{-1}
$$ 
is the \emph{section map} at $\vtx{w}$, see~\cite[\S1.3]{Nekrash} where $\cdot|_{\vtx{w}}$ is called \emph{restriction}. 
\end{numba}

The set of semi-infinite strings $\{0,\ \dots,\ q-1\}^{\mathbb{N}}$ may be identified with the set of ends, $\partial\treeqq$, of the rooted tree.

The next definitions may be found in~\cite[Definitions~1.5.3 and~2.8.1]{Nekrash}, although self-replicating groups are called \emph{recurrent} in that book and given the present name only in~\cite{Nekrash2} (also see~\cite{GriRev}).
\begin{definition}
\label{defn:s.s.&s.r.}
The subgroup, $G$, of $\Isom(\tree_{q,q})$ is \emph{self-similar} if, for each $\vtx{w}\in \gr{V}\treeqq$, the section $g|_\vtx{w}$ belongs to $G$  for every $g\in G$.

The self-similar group, $G$, is \emph{self-replicating} if it is transitive on level~1 of $\treeqq$ and the section map 
$|_{\vtx{w}} : G_\vtx{w}\to G$ is surjective for every $\vtx{w}\in \gr{V}\treeqq$. 
\end{definition}
\paragraph{Note:}an earlier version of this article gave a weaker definition of self-similar groups which required that $g|_\vtx{w}$ belongs to $G$ only for $g\in G_\vtx{w}$. The two definitions are compared in \cite[\S2.4]{GriSav}.

The examples of self-similar groups given in~\cite[\S1.6--9]{Nekrash} include many examples of self-replicating groups such as the Grigorchuk~\cite{Grigorchuk} and Gupta-Sidki~\cite{Gupta-Sidki} groups and the odometer action of $\mathbb{Z}$ on $\treeqq$. That $G$ should be closed in $\Isom(\treeqq)$ is not part of the definition that $G$ be self-replicating. The groups considered in this article are closed, however, and the closure of a self-replicating group, such as one of those described in~\cite{Nekrash}, is self-replicating. 

\begin{numba}
\label{nota:treeqqo}
The notation $\treeqqo$ will be used for the rooted tree in which the root has $q-1$ children and every other vertex has $q$ children. This will be identified with the rooted subtree of $\treeqq$ spanned by finite strings in $\{0, \dots, q-1\}$ whose first letter is not $0$.
\end{numba}

\subsection{The regular tree in terms of sequences of labels}
\label{sec:concrete}

This subsection describes the regular tree, $\treeq$, in terms of infinite strings of symbols in an analogous way to the description of $\treeqq$ in paragraph~\ref{nota:treeqq}. The labellings of vertices and edges implicit in this description underpin the connection between scale groups and self-replicating groups shown Section~\ref{sec:s.s&s.r}. The description given here has also been given in \cite[\S2.2]{B_T_Zheng}.

The end, $\omega$, fixed by a scale group plays a distinguished role like that played by the root vertex in the case of self-replicating groups. Corresponding to the levels of a rooted tree are levels relative to $\omega$ in a regular tree. 
\begin{definition}
\label{defn:horo} 
Fix $\omega\in\partial\treeq$, choose a vertex $\vtx{v}_0\in \gr{V}\treeq$ and define $B_{\vtx{v}_0} : \gr{V}\treeq\to\mathbb{Z}$ by 
$$
B_{\vtx{v}_0}(\vtx{v}) = d(\vtx{v},\vtx{w}) - d(\vtx{v}_0,\vtx{w})\quad (\vtx{w}\in [\vtx{v},\omega)\cap [\vtx{v}_0,\omega)).
$$
(The number on the right is independent of $\vtx{w}\in [\vtx{v},\omega)\cap [\vtx{v}_0,\omega)$). The function $B_{\vtx{v}_0}$ is a \emph{Busemann function} for $\omega$ and the level sets of this function are called \emph{horospheres}. 
\end{definition}
See \cite[Chapter~II.8]{BridHaef} for Busemann functions and horospheres. The horospheres in $\treeq$ depend only on $\omega$ and not on the choice $\vtx{v}_0$. Horospheres are counterparts of the levels of a rooted tree.

The tree $\treeq$ may be defined as the span of the horospheres for a specific end. The same definition is given in \cite[\S2.2]{B_T_Zheng}.
\begin{definition}
\label{defn:treeq+1}
For each $n\in\mathbb{Z}$, define
$$
\gr{V}_n:= \left\{ \vtx{w} = (w_i) \in \{0,\dots,q-1\}^{(-\infty,n]} \mid w_i=0\mbox{ for almost all }i \right\}
$$
and
$$
\gr{V} := \bigcup_{n\in\mathbb{Z}} \gr{V}_n.
$$
Thus $\gr{V}$ is a set of semi-infinite strings in the symbols $\{0,\dots,q-1\}$ with all but finitely many symbols in each string being equal to~$0$. 

For each $\vtx{w}\in \gr{V}_n$, $\vtx{w}^+ = \vtx{w}|_{(-\infty,n-1]}$ is the string which omits the last term of $\vtx{w}$ and, for each $j\in\{0,\dots,q-1\}$, $\vtx{w}j\in \gr{V}_{n+1}$ is defined by
$$
(\vtx{w}j)_i = w_i\mbox{ if }i\leq n \mbox{ and }(\vtx{w}j)_{n+1} = j.
$$
Then $\vtx{w}\mapsto \vtx{w}^+ : \gr{V}_n\to \gr{V}_{n-1}$ is a $q$-to-$1$ surjective map. Define 
$$
\gr{E} = \left\{ \{\vtx{w},\vtx{w}^+\} \mid \vtx{w}\in \gr{V}\right\}.
$$
Then each $\vtx{w}\in \gr{V}$ has the $q+1$ neighbours 
$$
\{\vtx{w}^+\}\cup \left\{ \vtx{v}\in \gr{V} \mid \vtx{v}^+=\vtx{w}\right\} = \{\vtx{w}^+\}\cup \left\{ \vtx{w}j\in \gr{V} \mid j\in\{0,\dots,q-1\}\right\}.
$$ 
The $(q+1)$-regular tree $\wtreeq$ will be henceforth identified with the graph $(\gr{V},\gr{E})$.

For $\vtx{w}\in\gr{V}_n$ and $j\in \mathbb{N}$, $\vtx{w}|_{(-\infty,n-j]} \in \gr{V}_{n-j}$ is the semi-infinite string which omits the last $j$ terms in $\vtx{w}$. Then $\{\vtx{w}|_{(-\infty,n-j]}\}_{j\in \mathbb{N}}$ is a ray in $\wtreeq$. Denote the string in $\gr{V}_n$ in which all symbols equal~$0$ by $\tvtx{v}_n$. Then for each $\vtx{w}\in\gr{V}$ there is $J$ such that $\vtx{w}|_{(-\infty,n-j]} = \tvtx{v}_j$ for all $j>J$ and hence the rays $\{\vtx{w}|_{(-\infty,n-j]}\}_{j\leq n}$, $\vtx{w}\in \gr{V}_n\subset\gr{V}$, all lie in the same end. This end will be denoted by~$\omega$. 

Define the particular element $\tilde{x}_0\in \Isom(\wtreeq)_\omega$ by
\begin{equation}
\label{eq:wxzero}
(\tilde{x}_0.\vtx{w})_m = w_{m-1},  \quad (\vtx{w}=(w_m)\in \gr{V}\wtreeq).
\end{equation}
In particular, $\tilde{x}_0.\tvtx{v}_n = \tvtx{v}_{n+1}$, $(n\in\mathbb{Z})$.
\end{definition}

\begin{remark}
	\label{rem:ends&sides}
	The sets $\gr{V}_n$ are the horospheres for $\omega$. The element $\tilde{x}_0$ is a translation with repelling end $\omega$ and axis of translation the path $\{\tvtx{v}_n\}_{n\in\mathbb{Z}}$. 
	
	Given a bi-infinite sequence $(w_i)_{i\in\mathbb{Z}}$ with $w_i\in \{0,\dots,q-1\}$ and $w_i=0$ as $i\to-\infty$, put $\vtx{w}_n = (w_i)_{i\leq n}\in \{0,\dots,q-1\}^{(-\infty,n]}$. Then $\{\vtx{w}_n\}_{n\in\mathbb{Z}}$ is a path in $\wtreeq$. The ray $\{\vtx{w}_n\}_{n\leq0}$ belongs to the end $\omega$ and the ray $\{\vtx{w}_n\}_{n\geq0}$ determines a unique end of $\treeq$ that is not equal to $\omega$. The bi-infinite sequences $(w_i)_{i\in\mathbb{Z}}$  thus correspond to $\partial\wtreeq\setminus\{\omega\}$. 
\end{remark}

\begin{numba}
	Every $\vtx{v}\in\gr{V}\treeq$ has the form $\vtx{v} = \tvtx{v}_n\vtx{w}$ with $n\in\mathbb{Z}$ and $\vtx{w}\in\treeqqo$ unique. (Recall from paragraph~\ref{nota:treeqqo} that strings in $\gr{V}\treeqqo$ have their first letter not~$0$.) Then the element $\tilde{x}_0\in\Isom(\treeq)$ satisfies 
	$$
	\tilde{x}_0.\vtx{v} = \tilde{x}_0.(\tvtx{v}_n\vtx{w}) = \tvtx{v}_{n+1}\vtx{w},\quad (\vtx{v}\in \gr{V}\treeq).
	$$
\end{numba}

\begin{numba}
	\label{num:section_regular}
	Let $\vtx{w}$ be in the horosphere $\gr{V}_n$ of $\treeq$. Then each $\vtx{v}\in \gr{V}\tree_{\vtx{w}}$ belongs to a horosphere $\gr{V}_m$ with $m\geq n$ and has the form $\vtx{w}v_{n+1}\dots v_{m}$ with $v_j\in q$. As in paragraph~\ref{num:rooted_section}, define the isomorphism $\phi_{\vtx{w}}: \tree_{\vtx{w}} \to \treeqq$ by
\begin{equation*}
\label{eq:rooted_subtrees}
\phi_{\vtx{w}} : \vtx{w}v_{n+1}\dots v_{m}\mapsto v_{n+1}\dots v_{m}.
\end{equation*}
Then, for each $g\in \Isom(\treeq)$,  the \emph{section} of $g$ at $\vtx{w}$ is  the composite map 
\begin{equation*}
\label{eq:section}
g|_{\vtx{w}} = \phi_{g.\vtx{w}}\circ g\circ \phi_\vtx{w}^{-1}\in \Isom(\tree_{q,q}).
\end{equation*}
\end{numba}

 Definition~\ref{defn:treeq+1} facilitates a direct description of the representation of $\mathbb{Q}_p\rtimes \mathbb{Q}_p^\times$ as a scale group on the Bruhat-Tits tree seen in Example~\ref{examp:scale_gp}.
\begin{example}
	\label{examp:padic}
	Construct a graph by defining its vertex and edge sets to be 
	\begin{align*}
	\gr{V} &= \left\{ y + p^{n+1}\mathbb{Z}_p \mid n\in\mathbb{Z},\ y\in\mathbb{Q}_p\right\}
	\\
	\mbox{ and }\ \gr{E} &= \left\{ (y + p^{n}\mathbb{Z}_p,y + p^{n+1}\mathbb{Z}_p) \mid n\in\mathbb{Z},\ y\in\mathbb{Q}_p\right\}.
	\end{align*}
	Each $y\in \mathbb{Q}_p$ may be written uniquely as $y = \sum_{n=-\infty}^\infty y_n p^n$ with the coefficients $y_n\in\{0,\dots,p-1\}$ for each $n$ and $y_n=0$ as $n\to-\infty$. Hence the vertex $y + p^{n+1}\mathbb{Z}_p$ may be specified by the sequence $(y_m)_{m\leq n} \in \{0,\dots,p-1\}^{(-\infty,n]}$. This vertex then has the $p+1$ neighbours $y + p^{n}\mathbb{Z}_p$ and $y + p^n y_{n+1}+ p^{n+2}\mathbb{Z}_p$, $y_{n+1}\in\{0,\dots,p-1\}$ and the graph is isomorphic to $\tree_{p+1}$.  
	
	The $(ax+b)$-group $\mathbb{Q}_p\rtimes \mathbb{Q}_p^\times$ acts on $\gr{V}$ by
	\begin{equation}
	\label{eq:padic_action}
	(b,a).(y + p^{n+1}\mathbb{Z}_p) = (b+ay) + ap^{n+1}\mathbb{Z}_p,\quad (y + p^{n+1}\mathbb{Z}_p\in \gr{V}).
	\end{equation}
	This action preserves the edge relations and thus defines an action by automorphisms of $\tree_{p+1}$. 
	
	For each fixed $y\in\mathbb{Q}_p$ the bi-infinite sequence $\{y+ p^{n+1}\mathbb{Z}_p\}_{n\in\mathbb{Z}}$ is a path in the tree. All rays $\{y+ p^{n+1}\mathbb{Z}_p\}_{n\leq 0}$ ($y\in\mathbb{Q}_p$) belong to the same end of the tree because, for every $y$, there is $m\in\mathbb{Z}$ such that $y_{m'}=0$ if $m'\leq m$. Identify this end with $[1:0]\in P^1(\mathbb{Q}_p)$ and identify the end containing the ray $\{y+ p^{n+1}\mathbb{Z}_p\}_{n\geq 0}$ with $[y:1]$. The boundary of the tree is thus identified with the projective line $P^1(\mathbb{Q}_p) = \{[1:0]\}\cup \{[y:1] \mid y\in\mathbb{Q}_p\}$. 
	The horospheres for the end $[1:0]$ are the sets $\gr{V}_n := \mathbb{Q}_p/p^{n+1}\mathbb{Z}_p$.
\end{example}


The explicit description of $\wtreeq$ in Definition~\ref{defn:treeq+1} allows isometries fixing the end $\omega$ to be presented in terms of finite permutations located at the vertices of the tree. This corresponds to the portrait description of elements of $\Isom(\treeqq)$ in~\cite[\S2.1]{GriSav} and~\cite[\S1.3.2]{Nekrash}.
\begin{proposition}
\label{prop:describe_autos}
For every $x\in \Isom(\wtreeq)_\omega$ there are a unique $n\in\mathbb{Z}$ and elliptic $h\in \Isom(\wtreeq)_\omega$ such that $x = h\tilde{x}_0^n$. 

For the elliptic $h$ and each $\vtx{w} \in \gr{V}\wtreeq$, there is $\pi^{(h)}_\vtx{w}\in \Sym(q)$ such that
\begin{equation}
\label{eq:auto_form0}
h.(\vtx{w}j) = (h.\vtx{w})\pi^{(h)}_\vtx{w}(j), \quad (j\in \{0,\dots, q-1\})
\end{equation}
and there is $N\in\mathbb{Z}$ depending on $h$ such that $\pi^{(h)}_{\tvtx{v}_n}(0) = 0$ for all $n\leq N$. 

Conversely, given a function $\pi : \gr{V}\wtreeq \to \Sym(q)$ and $N\in\mathbb{Z}$ such that $\pi_{\tvtx{v}_n}(0) = 0$ for all $n\leq N$, define, for each $\vtx{w} \in \gr{V}_n\subset \gr{V}\wtreeq$,
\begin{equation}
\label{eq:auto_form2}
w'_i = \pi_{\vtx{w}|_{(-\infty,i-1]}}(w_{i}),\quad (i\in (-\infty,n]).
\end{equation}
Then $\vtx{w}' := (w'_i)_{i\leq n}\in \gr{V}_n$ and the map $\vtx{w}\mapsto \vtx{w}' : \gr{V}\wtreeq \to \gr{V}\wtreeq$ is an elliptic element, $h$, in $\Isom(\wtreeq)_\omega$. 
\end{proposition}
\begin{proof}
Given $x\in \Isom(\wtreeq)_\omega$, there is $n\in \mathbb{Z}$ such that $x.\gr{V}_m = \gr{V}_{m+n}$ for every horosphere $\gr{V}_m$. Then $h := x\tilde{x}_0^{-n}$ is elliptic, thus proving the first claim. 

Next, consider $h$ elliptic, $\vtx{w}\in \gr{V}\wtreeq$ and $i\in\{0,\dots,q-1\}$. Since $(\vtx{w}i)^+ = \vtx{w}$, we have that $(h.(\vtx{w}i))^+ = h.\vtx{w}$ and $h.(\vtx{w}i) = (h.\vtx{w})k$ with $k\in q$. The map $\pi^{(h)}_\vtx{w} : i\mapsto k$ is a bijection because $h$ is, and so $\pi^{(h)}_\vtx{w}$ belongs to $\Sym(q)$ and~\eqref{eq:auto_form0} holds. Moreover, since $h.\omega = \omega$, there is $N\in \mathbb{Z}$ such that $h.\vtx{v}_n = \vtx{v}_{n}$ for all $n\leq N$. Hence $\pi^{(h)}_{\vtx{v}_n}(0) = 0$ for all $n\leq N$. 


Next, suppose that $(w'_i)_{i\leq n}$ is defined by~\eqref{eq:auto_form2}. Choose $N\in\mathbb{Z}$ such that $w_i=0$ and $\pi_{\vtx{v}_i}(0)=0$ for all $i\leq N$. Then $w'_i = 0$ for all $i\leq N$ and $\vtx{w}' = (w'_i)_{i\leq n}\in \gr{V}_n\subset\gr{V}\wtreeq$. For $\vtx{w}\in \gr{V}_n$, both $\vtx{w}^+$ and $h.\vtx{w}^+$  belong to $\gr{V}_{n-1}$ and we have $(h.\vtx{w}^+)_i = (h.\vtx{w})^+_i$ for all $i\in(-\infty,n-1]$.
Hence $h.\vtx{w}^+ = (h.\vtx{w})^+$ and $h$ preserves the edges in $\wtreeq$. With $N$ chosen as above, we have that $h.\tvtx{v}_n = \tvtx{v}_n$ for all $n\leq N$. Hence $h.\omega = \omega$ and $h\in\Isom(\wtreeq)_{\omega}$.
\end{proof}

Proposition~\ref{prop:describe_autos} facilitates the definition of classes of examples of scale groups. The first examples are analogues of the ``universal group" for a local action defined in~\cite{BM}. Let $\pi^{(x)} : \gr{V}\treeq\to \Sym(q)$ be as in the proposition.
\begin{example}
\label{examp:scale_group2}
Let $S\leq \Sym(q)$ be a transitive permutation group. Define
$$
P^{(S)} = \left\{ h\tilde{x}_0^n\in \Isom(\wtreeq)_\omega \mid n\in\mathbb{Z},\ \pi^{(h)}_\vtx{w}\in S\mbox{ for all }\vtx{w}\in\gr{V}\treeq\right\}.
$$
Then $P^{(S)}$ is a closed subgroup of $\Isom(\wtreeq)_\omega$ and is transitive on $\gr{V}\treeq$ because transitivity of $S$ implies that $P^{(S)}_{\vtx{v}}$ is transitive on the children of $\vtx{v}$ and $\tilde{x}_0$ translates the horospheres. In fact, $P^{(\Sym(q))} = \Isom(\treeq)_\omega$.
\end{example}

\begin{example}
\label{examp:scale_group}
Let $S\leq \Sym(q)$ be a transitive permutation group. Define
$$
C^{(S)} = \left\{ h\tilde{x}_0^n\in P^{(S)} \mid \pi^{(h)}\mbox{ is constant on horospheres}\right\}.
$$
In other words, there are $\chi\in S^{\mathbb{Z}}$ and $N\in\mathbb{Z}$ with $\chi_n(0) = 0$ for all $n\leq N$ and $\pi^{(h)}_\vtx{w} = \chi_n$ for all $\vtx{w}\in \gr{V}_n$. Then the same argument as in the previous example shows that $C^{(S)}$ is a scale group.
\end{example}
A particular case of Example~\ref{examp:scale_group} has $S$ the additive group of the field $\mathbb{F}_p$ of order~$p$ acting on itself by translation. Then $C^{(\mathbb{F}_p,+)} \cong \mathbb{F}_p(\!(t)\!)\rtimes \langle t\rangle$.

\section{Edge labellings and scale groups, correspondences with self-replicating groups}
\label{sec:s.s&s.r}

By an \emph{edge-labelling} of $\treeq$ we shall mean a map that assigns a label in $\{0,\dots,q\}$ to each directed edge in $\treeq$ and which is, for every vertex~$\vtx{v}$, a bijection on the set of out-edges from~$\vtx{v}$. Edge labellings compatible with a scale group~$P$, see Definition~\ref{defn:compatible_colour}, are the key to showing the correspondence between scale groups 
and self-replicating groups 
because tree labellings are used in the definition of the latter. This correspondence was shown previously in~\cite{Horadam} using a method that is less direct than compatible labellings. 

Although the idea is inspired by the notion of legal colouring of edges defined in~\cite{BM}, edge labellings compatible with~$P$ differ in that: all edges pointing towards the end~$\omega$ are labelled~$q$, see Definition~\ref{defn:compatible_colour}.\ref{defn:compatible_colour1}; the label $0$ also has a special role, see Definition~\ref{defn:compatible_colour}.\ref{defn:compatible_colour2}; and compatibility with~$P$ means that there are elements of $P$ that preserve the labelling on half-trees not containing~$\omega$, see Definition~\ref{defn:compatible_colour}.\ref{defn:compatible_colour3}. 

\subsection{Edge labellings compatible with a scale group}
\label{sec:edgelabelling}

Recall from \S\ref{defn:scalegp} that: $\omega$ is a fixed end of $\treeq$; $\gr{E}_\vtx{v}$ is the set of out-edges at~$\vtx{v}$; $\vtx{v}^+$ is the unique neighbour of $\vtx{v}$ that is closer to $\omega$; and $\tree_\vtx{v}$ is the subtree of $\treeq$ spanned by the set of vertices~$\vtx{w}$ such that $\vtx{v}$ lies on the ray $(\omega,\vtx{w}]$.  

\begin{definition}
\label{defn:compatible_colour}
Suppose that $P\in \inftran$. A \emph{$P$-labelling of $\gr{E}\treeq$} is a set of bijective maps $c_\vtx{v} : \gr{E}_\vtx{v}\to \{0,1,\dots, q\}$, one for each $\vtx{v}\in \gr{V}\treeq$, such that 
\begin{enumerate}
\item $c_\vtx{v}(\vtx{v},\vtx{v}^+) = q$ for all $\vtx{v}\in \gr{V}\treeq$; 
\label{defn:compatible_colour1}
\item there is a path, $\{\vtx{v}_n\}_{n\in\mathbb{Z}}$, in $\treeq$ with $\vtx{v}_n = \vtx{v}_{n+1}^+$ and $c_{\vtx{v}_n}(\vtx{v}_n,\vtx{v}_{n+1}) = 0$ for all $n\in\mathbb{Z}$; and 
\label{defn:compatible_colour2}
\item for every $\vtx{u}_1,\vtx{u}_2\in \gr{V}\treeq$ there is $x\in P$ such that $x.\vtx{u}_1 = \vtx{u}_2$ and $x|_{\tree_{\vtx{u}_1}}$ preserves the labelling $T_{\vtx{u}_1}\to T_{\vtx{u}_2}$.
\label{defn:compatible_colour3}
\end{enumerate}
A $P$-labelling of $\gr{E}\treeq$ will be said to be \emph{compatible with $P$}. 
\end{definition}
Given a $P$-labelling $\{c_\vtx{v}\}_{\vtx{v}\in \gr{V}\treeq}$ of $\gr{E}\treeq$ and an edge $(\vtx{v},\vtx{w})$, Definition~\ref{defn:compatible_colour}.\ref{defn:compatible_colour1} implies that $c_\vtx{v}(\vtx{v},\vtx{w})\in\{0,\dots,q-1\}$ if and only if $c_\vtx{w}(\vtx{w},\vtx{v}) = q$. 

For every scale group $P$ there is at least one $P$-labelling of $\gr{E}\treeq$.
\begin{proposition}
\label{prop:compatible_label}
Let $P\in \inftran$. Then there is a $P$-labelling of $\gr{E}\treeq$.  
\end{proposition}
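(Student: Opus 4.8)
The plan is to build the labelling by hand from a translation and a coherent system of coset transversals, and then simply read off the three defining properties. First I would fix a vertex $\vtx{v}_0$ and use Proposition~\ref{prop:scale_gp_props} to produce $x\in P$ for which $x.\vtx{v}_0$ is a child of $\vtx{v}_0$ and $[\stab_P(\vtx{v}_0):x\stab_P(\vtx{v}_0)x^{-1}]=q$; writing $\vtx{v}_n=x^n.\vtx{v}_0$ for the axis of $x$, the sequence $\{\vtx{v}_n\}_{n\ge 0}$ is a ray inside the rooted subtree $\tree_{\vtx{v}_0}\cong\treeqq$, and $\treeq=\bigcup_{k\ge 0}\tree_{\vtx{v}_{-k}}$ with $\tree_{\vtx{v}_{-k}}=x^{-k}.\tree_{\vtx{v}_0}$. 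I would record a labelling by a single function $\sigma$ on vertices, $\sigma(\vtx{w})\in\{0,\dots,q-1\}$ being the label of the edge $(\vtx{w}^+,\vtx{w})$ and every edge $(\vtx{w},\vtx{w}^+)$ getting label $q$; with this encoding Definition~\ref{defn:compatible_colour}.\ref{defn:compatible_colour1} is automatic, \ref{defn:compatible_colour}.\ref{defn:compatible_colour2} becomes $\sigma(\vtx{v}_n)=0$ for all $n$, and \ref{defn:compatible_colour}.\ref{defn:compatible_colour3} becomes: any two vertices are matched by some element of $P$ preserving $\sigma$ on the proper descendants of the first.

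Next I would reduce Definition~\ref{defn:compatible_colour}.\ref{defn:compatible_colour3}. Since $P$ is vertex-transitive and fixes $\omega$, an element of $P$ fixing a horosphere setwise cannot be hyperbolic, so it is elliptic; as in the proof of Proposition~\ref{prop:scale_gp_props}, $\stab_P(\vtx{v})$ is transitive on the $q$ children of $\vtx{v}$ (hence $[\stab_P(\vtx{v}_n):\stab_P(\vtx{v}_{n+1})]=q$ and $\stab_P(\vtx{v}_0)$ is transitive on the set $X_n$ of depth-$n$ descendants of $\vtx{v}_0$), and combining these shows $P^{(e)}$ is transitive on every horosphere. Granting this, it suffices to construct $\sigma$ so that (a) for $\vtx{u}_1,\vtx{u}_2\in X_n$ there is $g\in\stab_P(\vtx{v}_0)$ with $g.\vtx{u}_1=\vtx{u}_2$ preserving $\sigma$ on the proper descendants of $\vtx{u}_1$, and (b) $\sigma$ is $x$-invariant; then for arbitrary $\vtx{u}_1\in\gr{V}_m$, $\vtx{u}_2\in\gr{V}_{m'}$ one conjugates (a) into $\tree_{\vtx{v}_0}$ by a power of $x$ and inserts $x^{m'-m}$, using (b), to obtain a matching element preserving the labelling (the ``upward'' edges cause no trouble, being labelled $q$ on both sides).

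For the construction I would fix coset representatives $g_{0,0}=\id,g_{0,1},\dots,g_{0,q-1}$ of $\stab_P(\vtx{v}_1)$ in $\stab_P(\vtx{v}_0)$ with $g_{0,j}.\vtx{v}_1$ enumerating the children of $\vtx{v}_0$, set $g_{n,j}=x^n g_{0,j}x^{-n}$ (so $x g_{n,j}x^{-1}=g_{n+1,j}$), and build transversals $T_n=\{t_\vtx{w}:\vtx{w}\in X_n\}\subset\stab_P(\vtx{v}_0)$ with $t_{\vtx{v}_n}=\id$ recursively by a coherence rule: for $\vtx{w}\in X_n$ with parent $\vtx{w}^+$, let $j(\vtx{w})$ be the index with $t_{\vtx{w}^+}^{-1}.\vtx{w}=g_{n-1,j(\vtx{w})}.\vtx{v}_n$, put $t_\vtx{w}=t_{\vtx{w}^+}g_{n-1,j(\vtx{w})}$, and declare $\sigma(\vtx{w})=j(\vtx{w})$. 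Finally extend $\sigma$ to all of $\treeq$ by $\sigma(\vtx{w})=\sigma(x^k.\vtx{w})$ for any $k$ with $x^k.\vtx{w}$ a proper descendant of $\vtx{v}_0$. That this extension is well defined, that $\sigma(\vtx{v}_n)=0$, and that (b) holds all come out of the key claim $t_{x.\vtx{w}}=x\,t_\vtx{w}\,x^{-1}$ (whenever both sides are defined), proved by induction on $n$ from $xg_{n-1,j}x^{-1}=g_{n,j}$; property (a) comes out of a parallel induction showing that for $\vtx{u}_1,\vtx{u}_2\in X_n$ the element $g=t_{\vtx{u}_2}t_{\vtx{u}_1}^{-1}\in\stab_P(\vtx{v}_0)$ satisfies $t_{g.\vtx{w}}=g\,t_\vtx{w}$ for every descendant $\vtx{w}$ of $\vtx{u}_1$, hence fixes all the indices $j(\vtx{w})$ and so preserves $\sigma$.

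The main obstacle is that one cannot just transport labels by stabiliser elements: a fully $\stab_P(\vtx{v}_0)$-invariant labelling is impossible, because $\stab_P(\vtx{v}_0)$ permutes the children of $\vtx{v}_0$ transitively. So the labelling must be pinned down through the coherent transversals, and the real work is checking that \emph{both} induction arguments go through with the representatives chosen $x$-conjugacy-compatibly — the $x$-equivariance of the $T_n$, which is what glues the pieces $x^{-k}.\tree_{\vtx{v}_0}$ into a consistent labelling of all of $\treeq$ (giving Definition~\ref{defn:compatible_colour}.\ref{defn:compatible_colour2} and (b)), and the relative equivariance $t_{g.\vtx{w}}=g\,t_\vtx{w}$ that yields (a). Everything else — that each $c_\vtx{v}$ is a bijection onto $\{0,\dots,q\}$, and that the elements composed in the reduction really preserve the labelling on the relevant rooted subtrees — is routine bookkeeping.
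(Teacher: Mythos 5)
Your proposal is correct and is essentially the paper's construction: with $x_i := g_{0,i}x$ your transversal expression $t_{\vtx{w}}x^{d}$ is exactly the paper's word $x_{i_1}\dots x_{i_d}$, so the labelling you build on $\tree_{\vtx{v}_0}$ and its extension by powers of $x$ coincide with the labelling defined in~\eqref{eq:define_colour} and~\eqref{eq:define_colour2}, and your witnesses for Definition~\ref{defn:compatible_colour}.\ref{defn:compatible_colour3} are the same composites of powers of the translation with label-preserving elements. The only differences are bookkeeping: you package the data as one hyperbolic element plus elliptic coset representatives and prove the equivariance claims ($t_{x.\vtx{w}}=xt_{\vtx{w}}x^{-1}$ and $t_{g.\vtx{w}}=gt_{\vtx{w}}$) by explicit inductions, and you check Definition~\ref{defn:compatible_colour}.\ref{defn:compatible_colour2} explicitly, steps the paper absorbs into the uniqueness of the word $x_{i_1}\dots x_{i_d}$ and the extension by $x_0$.
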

\begin{proof}
	Since $P$ is a scale group, there is $x_0\in P$ which translates an axis $\{\vtx{v}_n\}_{n\in\mathbb{Z}}$ and for which $\omega$ is a repelling end. The labelling will be defined initially on the rooted tree $\tree_{\vtx{v}_0}$. Let $\vtx{w}_i$, $i\in\{0,\dots, q-1\}$, be the children of $\vtx{v}_0$ in $\tree_{\vtx{v}_0}$ and, using that $P$ acts transitively on $\gr{V}\treeq$, choose $x_i\in P$ for each~$i\ne0$ such that $x_i.\vtx{v}_0 = \vtx{w}_i$. Since $x_i.\tree_{\vtx{v}_0} = \tree_{\vtx{w}_i}$ for each $i$, there is, for each $\vtx{w}\in \gr{V}\tree_{\vtx{v}_0}$, a unique string $i_1\dots i_d$ in $\{0,\dots,q-1\}^*$ such that $(x_{i_1}\dots x_{i_d}).\vtx{v}_0 = \vtx{w}$ and: 
	\begin{itemize}
		\item $d$ is the level of $\vtx{w}$ in $\tree_{\vtx{v}_0}$;  
		\item $\vtx{w}^+ = (x_{i_1}\dots x_{i_{d-1}}).\vtx{v}_0$; and 
		\item the children of $\vtx{w}$ are $(x_{i_1}\dots x_{i_d}).\vtx{w}_i = (x_{i_1}\dots x_{i_d}x_i).\vtx{v}_0$, for $i\in q$.
	\end{itemize} Define 
\begin{equation}
\label{eq:define_colour}
c_\vtx{w}(\vtx{w},\vtx{w}^+) = q\mbox{ and }c_\vtx{w}(\vtx{w},(x_{i_1}\dots x_{i_d}).\vtx{w}_i) = i.
\end{equation}
Then $c_\vtx{w}$ satisfies Definition~\ref{defn:compatible_colour}.\ref{defn:compatible_colour1} and, setting $x = x_{i_1}\dots x_{i_d}$, Definition~\ref{defn:compatible_colour}.\ref{defn:compatible_colour3} is satisfied for $\vtx{u}_1 = \vtx{v}_0$ and $\vtx{u}_2 = \vtx{w}$. 

To extend the labelling to all of $\gr{E}\treeq$, observe that $x_0.\tree_{\vtx{v}_0}\subset \tree_{\vtx{v}_0}$ with translation by $x_0$ preserving the labelling in $\gr{E}\tree_{\vtx{v}_0}$, and that every vertex in $\treeq$ is in $\tree_{\vtx{v}_n}$ for some $n\in\mathbb{Z}$. Hence, if $\vtx{v}\in \tree_{\vtx{v}_n}$, then $x_0^{-n}.\vtx{v}\in \tree_{\vtx{v}_0}$ and it does not depend on the choice of $n$ to define 
\begin{equation}
\label{eq:define_colour2}
c_\vtx{v}(\vtx{v},\vtx{w}) = c_{x_0^{-n}.\vtx{v}}(x_0^{-n}.\vtx{v}, x_0^{-n}.\vtx{w}), \quad (\vtx{v},\vtx{w})\in \gr{E}_\vtx{v}.
\end{equation}

To see that Definition~\ref{defn:compatible_colour}.\ref{defn:compatible_colour3} is satisfied, consider $\vtx{u}_1,\vtx{u}_2\in \gr{V}\treeq$. Then there are $n_i\geq0$ such that $x_0^{n_i}.\vtx{u}_i\in \tree_{\vtx{v}_0}$ and $y_i\in P$ such that $x_0^{n_i}.\vtx{u}_i = y_i.\vtx{v}_0$ and $x_0$, $y_1$ and $y_2$ preserve the labelling. Set $x = x_0^{-n_2}y_2y_1^{-1}x_0^{n_1}$. Then $x.\vtx{u}_1 = \vtx{u}_2$ and $x|_{\tree_{\vtx{u}_1}}$ preserves the labelling $\tree_{\vtx{u}_1}\to \tree_{\vtx{u}_2}$. 
\end{proof}

The following illustrates the labelling defined in Proposition~\ref{prop:compatible_label}.
\begin{example}
\label{examp:p-adic}
Labellings of $\gr{E}\tree_{p+1}$ compatible with the action of $\mathbb{Q}_p \rtimes\mathbb{Q}_p^\times$ on $\tree_{p+1}$ given in Equation~\eqref{eq:padic_action} of Example~\ref{examp:padic} will be defined. 

Denote $0+p\mathbb{Z}_p\in \gr{V}\tree_{p+1}$ by $\vtx{v}_0$. 
Choose $a\in \mathbb{Z}_p^\times$ with $|a|_p=1$ and set $x_i = (ip,ap)$ in $\mathbb{Q}_p \rtimes\mathbb{Q}_p^\times$. The children of $\vtx{v}_0$ in $\tree_{\vtx{v}_0}$ are 
$$
\vtx{w}_i = x_i.(0+p\mathbb{Z}_p) = ip + p^2\mathbb{Z}_p,\quad i\in\{0,\dots,p-1\}.
$$ 

 Following Proposition~\ref{prop:compatible_label}, we begin by labelling the edges from the vertex, $\vtx{w} = x.\vtx{v}_0$, with $x = x_{i_1}\dots x_{i_d}$, on level $d$ of the rooted $\tree_{\vtx{v}_0}$. Noting that $x = (b,(ap)^d)$ where $b = i_1p+ i_2 ap^2 +\dots + i_d a^{d-1}p^d$, we have $\vtx{w} = b + p^{d+1}\mathbb{Z}$ and the children of $\vtx{w}$ on level $d+1$ are $x.\mathbf{w}_i = b + ia^dp^{d+1} + p^{d+2}\mathbb{Z}_p$. Hence, denoting the labelling given by this choice of $x_i$ in Proposition~\ref{prop:compatible_label} by $c^{(a)}_{\mathbf{w}}$,
\begin{align}
 c^{(a)}_{\mathbf{w}}(b+p^{d+1}\mathbb{Z}_p,b + ia^dp^{d+1} + p^{d+2}\mathbb{Z}_p) &= i \ \ \text{ or, equivalently, }\notag\\
c^{(a)}_{\mathbf{w}}(b+p^{d+1}\mathbb{Z}_p,b + jp^{d+1} + p^{d+2}\mathbb{Z}_p) &= ja^{-d} \pmod{p}.\label{eq:label_tree}
\end{align}
In case $a=1$ this labelling also comes from the identification of $\gr{V}\tree_{p+1}$ with $\bigcup_{n\in\mathbb{Z}} \mathbb{Q}_p/p^{n+1}\mathbb{Z}_p$.

The labelling extends to all of $\tree_{p+1}$ by Equation~\eqref{eq:define_colour2}. Given $\mathbf{v}\in \gr{V}_d\tree_{p+1}$ and $(\mathbf{v},\mathbf{w})\in \gr{E}_{\mathbf{v}}$, we have $\mathbf{v} = b+p^{d+1}\mathbb{Z}_p$  and $\mathbf{w} = b+jp^{d+1} + p^{d+2}\mathbb{Z}_p$. Then, choosing $n$ such that $x_0^{-n}.\mathbf{v} \in \tree_{\mathbf{v}_0}$, we have $x_0^{-n}.\mathbf{v} = b(ap)^{-n}+p^{d-n+1}\mathbb{Z}_p$ and $x_0^{-n}.\mathbf{w} = b(ap)^{-n}+ja^{-n}p^{d-n+1} + p^{d-n+2}\mathbb{Z}_p$. Hence, by Equations~\eqref{eq:define_colour2} and~\eqref{eq:label_tree},
$$
c^{(a)}_{\mathbf{v}}(\mathbf{v},\mathbf{w}) = c^{(a)}_{x_0^{-n}.\vtx{v}}(x_0^{-n}.\vtx{v}, x_0^{-n}.\vtx{w}) = ja^{-n}a^{n-d} \pmod{p} = ja^{-d} \pmod{p},
$$ 
and Equation~\eqref{eq:label_tree} extends to all of $\tree_{p+1}$. 
\end{example}

\begin{remark}
The $P$-labelling of $\gr{E}\treeq$ in Proposition~\ref{prop:compatible_label} involves a choice of elements $x_i\in P$, $i\in\{0,\dots,q-1\}$. Different choices may produce a different $P$-labellings. In fact, all $P$-labellings correspond to such choices. Given a $P$-labelling $\{c_{\vtx{v}}\}_{\vtx{v}\in \gr{V}\treeq}$, let $\vtx{v}_0\in\gr{V}\treeq$ and $\vtx{w}_0$, \dots, $\vtx{w}_{q-1}$ be its  children in $\tree_{\vtx{v}_0}$. For each $i\in\{0,\dots,q-1\}$ choose $x_i\in P$ such that $x_i.\vtx{v} = \vtx{w}_i$ and $x_i|_{\tree_{\vtx{v}}} :  \tree_{\vtx{v}}\to \tree_{\vtx{w}_i}$ preserves the labelling of $\tree_{\vtx{v}}$. Then using these $x_0$, \dots, $x_{q-1}$ in the proof of Proposition~\ref{prop:compatible_label} reproduces the given labelling of $\gr{E}\treeq$. 
\end{remark}

The definition of $\treeq$ in terms of semi-infinite strings in Definition~\ref{defn:treeq+1} leads to the following labelling of $\gr{E}\treeq$.
\begin{definition}
	\label{defn:stan_labelling}
	The \emph{standard labelling} of $\wtreeq$ is $\{c_{\vtx{v}}\}_{\vtx{v}\in\gr{V}(\wtreeq)}$ with 
	$$
	c_\vtx{v}(\vtx{v},\vtx{v}j) = j\mbox{ and } c_\vtx{v}(\vtx{v},\vtx{v}^+) = q,\quad (j\in\{0,\dots, q-1\}).
	$$ 
\end{definition}
The standard labelling is an $\Isom(\treeq)_{\omega}$-labelling of $\gr{E}\treeq$ with $\vtx{v}_n = \tvtx{v}_n$, but it is also true that every labelling satisfying Definition~\ref{defn:compatible_colour}.\ref{defn:compatible_colour1} and~\ref{defn:compatible_colour}.\ref{defn:compatible_colour2} is compatible with $\Isom(\treeq)_{\omega}$. A proper scale subgroup, $P$, of $\Isom(\treeq)_{\omega}$ naturally has fewer $P$-labellings. All $P$-labellings are conjugate to the standard one, as follows.  

\begin{proposition}
	\label{prop:xzero}
	Let $P\in\inftran$ and suppose that $\{c'_{\vtx{v}}\}_{\vtx{v}\in \gr{V}\treeq}$ is a $P$-labelling of $\gr{E}\treeq$. Then there is an automorphism, $x$, of $\treeq$ that sends $\{c'_{\vtx{v}}\}_{\vtx{v}\in \gr{V}\treeq}$ to the standard labelling, {\it i.e.\/} $c'_{\vtx{v}} = c_{x.\vtx{v}}\circ x$, and with $x.\vtx{v}_n = \tvtx{v}_n$. 
	
	The conjugate $P' = x P x^{-1}$ belongs to $\inftran$ and the standard labelling is compatible with $P'$.
\end{proposition}
\begin{proof}
	Since $\Isom(\treeq)$ is transitive on $\partial\treeq$, it may be supposed that $P$ stabilises the end $\omega$ specified in Definition~\ref{defn:treeq+1}. Then $x$ will be in $\Isom(\treeq)_\omega$ and we may use Proposition~\ref{prop:describe_autos} to define it.
	
	Let $\vtx{v}\in \gr{V}\treeq$. Then ${c}_{\vtx{v}}(\vtx{v},\vtx{v}^+) = {c'}_{\vtx{v}}(\vtx{v},\vtx{v}^+) =q$ and so the restriction of ${c'}_{\vtx{v}}\circ{c}_{\vtx{v}}^{-1}$ to $\{0,1\dots,q-1\}$ is a permutation. Denote this permutation by $\pi_{\vtx{v}}$. Then there is $N$ such that $\pi_{\vtx{v}_n}(0)=0$ for all $n\leq N$ because, by Definition~\ref{defn:compatible_colour}.\ref{defn:compatible_colour2}, the ray $\{\vtx{v}_{n}\}_{n\leq0}$ belongs to $\omega$ and $N$ and $l$ may be chosen such that $\vtx{v}_n = \tvtx{v}_{n+l}$ for all $n\leq N$. Defining $x$ as in Proposition~\ref{prop:describe_autos} we obtain, by~\eqref{eq:auto_form0} and the definitions of $c_{x.\vtx{v}}$ and $\pi_\vtx{v}$,
	$$
	c_{x.\vtx{v}}(x.\vtx{v},x.(\vtx{v}j)) = c_{x.\vtx{v}}(x.\vtx{v},x.\vtx{v}\pi_\vtx{v}(j)) = \pi_\vtx{v}(j) = \pi_\vtx{v}(c_\vtx{v}(\vtx{v},\vtx{v}j)) = c'_\vtx{v}(\vtx{v},\vtx{v}j)
	$$
	as claimed. Since $\pi_{\vtx{v}_{n}}(0)=0$ for all $n\leq N$, it follows from Equation~\eqref{eq:auto_form2} that $x.\vtx{v}_{n}=\vtx{v}_{n}=\tvtx{v}_{n+l}$ for all such $n$. Replacing $x$ by $\tilde{x}_0^{-l}x$, it may be supposed that $x.\vtx{v}_{n}=\tvtx{v}_{n}$ for $n\leq N$. For any $n$ for which this holds, we have $\vtx{v}_{n+1} = \vtx{v}_nk$ with $k\in\{0,1,\dots, q-1\}$ and $c'_{\vtx{v}_n}(\vtx{v}_n,\vtx{v}_nk) = 0$. Hence, by Equation~\eqref{eq:auto_form2}, $x.\vtx{v}_{n+1}= x.\vtx{v}_n\pi_{\vtx{v}_n}(k)$ with $\pi_{\vtx{v}_n}(k) = \pi_{\vtx{v}_n}(c_{\vtx{v}_n}(\vtx{v}_n,\vtx{v}_nk)) = c'_{\vtx{v}_n}(\vtx{v}_n,\vtx{v}_nk) = 0$ and it follows that $x.\vtx{v}_{n+1}=\tvtx{v}_n0 = \tvtx{v}_{n+1}$. An induction argument now shows that $x.\vtx{v}_n = \tvtx{v}_{n}$ for all $n\in\mathbb{Z}$.
	
	The conditions satisfied by scale groups are preserved under conjugation and so $P' = x P x^{-1}$ is a scale group. The standard labelling satisfies Definition~\ref{defn:compatible_colour}.\ref{defn:compatible_colour1}\&.\ref{defn:compatible_colour2}, and it remains only to check the third condition for $P'$. Let $\vtx{u}_1,\vtx{u}_2\in \gr{V}\treeq$. There is $g\in P$ such that $g.(x^{-1}.\vtx{u}_1)= x^{-1}.\vtx{u}_2$ and $g|_{\tree_{x^{-1}.\vtx{u}_1}}$ preserves the labelling $c'$ because $c'$ is compatible with $P$. Then $xgx^{-1}\in P'$, $(xgx^{-1}).\vtx{u}_1=\vtx{u}_2$ and $xgx^{-1}|_{\tree_{\vtx{u}_1}}$ preserves the standard labelling.
\end{proof}

The elements $x_i\in P$, $i\in \{0,\dots,q-1\}$, chosen in the proof of Proposition~\ref{prop:compatible_label} generate a subgroup of $P$ that is transitive on $\gr{V}\treeq$. The proof thus also establishes the following fact which will be useful later.
\begin{proposition}
	\label{prop:weak_criterion}
	Suppose that $P$ is a closed subgroup of $\Isom(\treeq)_\omega$ and that, for some $\vtx{v}\in \gr{V}\treeq$, there are $x_i\in P$, $i\in\{0,\dots,q-1\}$, such that $x_i.\vtx{v}$ are the children of $\vtx{\vtx{v}}$ in the rooted tree $\tree_\vtx{v}$. Then $P$ is a scale group. 
	\endproof
\end{proposition}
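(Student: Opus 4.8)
The plan is to show that the subgroup $Q = \langle x_0,\dots,x_{q-1}\rangle$ generated by the given elements is already transitive on $\gr{V}(\treeq)$, and then invoke Propositions~\ref{prop:transitive_on_ends} and~\ref{prop:implies_scale} (or rather, directly verify the definition of a scale group for $P$, since $Q\leq P$ forces $P$ transitive too). Indeed, once $Q\leq \Isom(\treeq)_\omega$ is transitive on vertices and $P$ is closed with $Q\leq P\leq\Isom(\treeq)_\omega$, the group $P$ is automatically closed, fixes $\omega$, and is vertex-transitive, hence a scale group by Definition~\ref{defn:scalegp}. So the whole content is the transitivity of $Q$.

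The key steps, in order. First I would reproduce the combinatorial bookkeeping from the proof of Proposition~\ref{prop:compatible_label}: since each $x_i$ fixes $\omega$ and sends $\vtx{v}$ into $\tree_\vtx{v}$ (namely to a child of $\vtx{v}$), we have $x_i.[\vtx{v},\omega) = [x_i.\vtx{v},\omega)$ and hence $x_i.\tree_\vtx{v} = \tree_{x_i.\vtx{v}}$, the rooted subtree hanging below the $i$-th child of $\vtx{v}$. Iterating, for any finite string $i_1\cdots i_d\in\{0,\dots,q-1\}^*$ the element $x_{i_1}\cdots x_{i_d}$ maps $\vtx{v}$ to the vertex of $\tree_\vtx{v}$ reached by following the edges labelled $i_1,\dots,i_d$; as $i_1\cdots i_d$ ranges over all finite strings this vertex ranges over all of $\gr{V}(\tree_\vtx{v})$. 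Therefore $Q$ acts transitively on $\gr{V}(\tree_\vtx{v})$. Second, I would note that $x_0$ (say) is hyperbolic: it fixes $\omega$ and moves $\vtx{v}$ to an adjacent vertex, so by the trichotomy of~\cite[Proposition~3.2]{Tits} recalled in the text it translates a bi-infinite path, and since it fixes the end $\omega$ the vertex $\vtx{v}$ lies on its axis with $\omega$ a repelling end; consequently $x_0^{-1}$ pushes $\tree_\vtx{v}$ strictly outward, and $\bigcup_{n\geq0} x_0^{-n}.\tree_\vtx{v} = \gr{V}(\treeq)$. Combining the two observations: given any $\vtx{w}\in\gr{V}(\treeq)$, pick $n\geq0$ with $x_0^n.\vtx{w}\in\tree_\vtx{v}$, then pick $g\in Q$ with $g.\vtx{v} = x_0^n.\vtx{w}$, so that $(x_0^{-n}g).\vtx{v} = \vtx{w}$ with $x_0^{-n}g\in Q$. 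Hence $Q$, and a fortiori $P$, is transitive on $\gr{V}(\treeq)$.

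Since the proof of Proposition~\ref{prop:compatible_label} literally carried out this argument (the remark preceding the statement already says so), I would keep this short: essentially one sentence recalling that the elements $x_i$ were shown there to generate a vertex-transitive subgroup, plus the remark that a closed over-group of a vertex-transitive subgroup of $\Isom(\treeq)_\omega$ is a scale group. The only point requiring a hair of care — and the main (very mild) obstacle — is making sure the hypothesis really does supply a hyperbolic element: one must observe that at least one $x_i.\vtx{v}$ is a genuine neighbour of $\vtx{v}$ distinct from $\vtx{v}$, which is exactly the statement that $x_i.\vtx{v}$ is a child of $\vtx{v}$ in $\tree_\vtx{v}$, so the axis argument goes through and the ascending union $\bigcup_n x_0^{-n}.\tree_\vtx{v}$ exhausts the tree; the rest is the routine transitivity bookkeeping inside a single rooted subtree. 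I would therefore present the proof as: ``This was established in the course of proving Proposition~\ref{prop:compatible_label}: the elements $x_0,\dots,x_{q-1}$ generate a subgroup of $P$ transitive on $\gr{V}(\treeq)$, and $P$, being closed and contained in $\Isom(\treeq)_\omega$, is therefore a scale group.''

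\begin{proof}
This was established in the course of proving Proposition~\ref{prop:compatible_label}. There it was shown that, since each $x_i$ fixes $\omega$ and maps $\vtx{v}$ to a child of $\vtx{v}$ in $\tree_\vtx{v}$, one has $x_i.\tree_\vtx{v} = \tree_{x_i.\vtx{v}}$, and hence for every finite string $i_1\cdots i_d$ the element $x_{i_1}\cdots x_{i_d}$ carries $\vtx{v}$ to the vertex of $\tree_\vtx{v}$ at the end of the path through the children indexed $i_1,\dots,i_d$; as the string varies, this vertex ranges over all of $\gr{V}(\tree_\vtx{v})$. Thus the subgroup $Q$ of $P$ generated by $x_0,\dots,x_{q-1}$ is transitive on $\gr{V}(\tree_\vtx{v})$. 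Moreover $x_0$ fixes $\omega$ and moves $\vtx{v}$ to an adjacent vertex, so it is hyperbolic with $\vtx{v}$ on its axis and $\omega$ a repelling end; hence $\bigcup_{n\geq0} x_0^{-n}.\tree_\vtx{v} = \gr{V}(\treeq)$. Given any $\vtx{w}\in\gr{V}(\treeq)$, choose $n\geq0$ with $x_0^n.\vtx{w}\in\tree_\vtx{v}$ and $g\in Q$ with $g.\vtx{v} = x_0^n.\vtx{w}$; then $(x_0^{-n}g).\vtx{v} = \vtx{w}$ and $x_0^{-n}g\in Q\leq P$. Therefore $P$ is transitive on $\gr{V}(\treeq)$, and since it is closed and contained in $\Isom(\treeq)_\omega$, it is a scale group.
\end{proof}
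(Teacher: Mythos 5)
Your proof is correct and is essentially the paper's own argument: the paper disposes of this proposition by noting that the proof of Proposition~\ref{prop:compatible_label} already shows the elements $x_0,\dots,x_{q-1}$ generate a vertex-transitive subgroup (words $x_{i_1}\cdots x_{i_d}$ exhaust $\gr{V}(\tree_{\vtx{v}})$, and $\bigcup_{n\geq 0}x_0^{-n}.\tree_{\vtx{v}}$ exhausts $\gr{V}(\treeq)$), so a closed overgroup in $\Isom(\treeq)_\omega$ is a scale group. Your explicit write-up, including the check that $x_0$ is hyperbolic with $\omega$ repelling, is just a slightly more detailed rendering of the same reasoning.
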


\subsection{$P$-labellings and self-replicating groups}
\label{sec:scaleands.r.}

The correspondence between a scale group, $P$, acting on $\treeq$ with a compatible $P$-labelling and a self-replicating group, $\Psr{P}$, acting on $\treeqq$ will now be exhibited. Recall the definition of the section $x|_{\vtx{v}}$ from paragraph~\ref{num:section_regular}.


Given a scale group, $P$, put $P|_\vtx{v} := \left\{ x|_{\vtx{v}}\in \Isom(\tree_{q,q}) \mid x\in P\right\}$. 
\begin{proposition}
\label{prop:s.s.&s.r.}
Let $P\in \inftran$ be compatible with the standard labelling of $\gr{E}\treeq$. Then $P|_\vtx{v}$ is independent of $\vtx{v}\in \gr{V}\treeq$, is equal to $\phi_{\vtx{v}}\circ P_\vtx{v}|_{\tree_{\vtx{v}}}\circ\phi_{\vtx{v}}^{-1}$, and is a closed self-replicating subgroup of $\Isom(\tree_{q,q})$. 
\end{proposition}
\begin{proof}
The first step in the argument is to show that $P|_\vtx{v} = \phi_{\vtx{v}}\circ P_\vtx{v}|_{\tree_{\vtx{v}}}\circ\phi_{\vtx{v}}^{-1}$. It is clear that $P|_\vtx{v} \supseteq \phi_{\vtx{v}}\circ P_\vtx{v}|_{\tree_{\vtx{v}}}\circ\phi_{\vtx{v}}^{-1}$. Consider $x|_\vtx{v}\in P|_\vtx{v}$. Since $P\in \inftran$, there is $z\in P$ such that $z.(x.\vtx{v}) = \vtx{v}$ and $z|_{\tree_{x.\vtx{v}}}$ preserves the labelling $\tree_{x.\vtx{v}}\to \tree_\vtx{v}$. Then $zx\in P_\vtx{v}$ and $x|_\vtx{v} =  zx|_\vtx{v}$. Hence $P|_\vtx{v} \subseteq \phi_{\vtx{v}}\circ P_\vtx{v}|_{\tree_{\vtx{v}}}\circ\phi_{\vtx{v}}^{-1}$. 

That $P|_\vtx{v} = \phi_{\vtx{v}}\circ P_\vtx{v}|_{\tree_{\vtx{v}}}\circ\phi_{\vtx{v}}^{-1}$ is a closed subgroup of $\Isom(\tree_{q,q})$ then follows because $P_\vtx{v}$ is a closed subgroup of $\Isom(\treeq)$. 

Suppose next that $\vtx{w}$ is another vertex in $\treeq$. Then, since $P\in \inftran$, there is $y\in P$ such that $y.\vtx{w} = \vtx{v}$ and $y|_{\tree_\vtx{w}} : \tree_\vtx{w}\to \tree_\vtx{v}$ preserves the labelling of $\tree_\vtx{w}$. Hence for each $x\in P_\vtx{v}$ we have that $y^{-1}xy\in P_\vtx{w}$ and $y^{-1}xy|_{\tree_\vtx{w}} =  x|_{\tree_\vtx{v}}$.
It follows that $P_\vtx{v}|_{\tree_{\vtx{v}}}\leq P_\vtx{w}|_{\tree_{\vtx{w}}}$ and, reversing roles, that $P_\vtx{v}|_{\tree_{\vtx{v}}}\geq P_\vtx{w}|_{\tree_{\vtx{w}}}$. Hence $P|_\vtx{v} = P|_\vtx{w}$ and does not depend on $\vtx{v}$. 

To show that $\phi_{\vtx{v}}\circ P_\vtx{v}|_{\tree_{\vtx{v}}}\circ\phi_{\vtx{v}}^{-1}$ is self-replicating, it must be shown that 
$$
(\phi_{\vtx{v}}\circ P_\vtx{v}|_{\tree_{\vtx{v}}}\circ\phi_{\vtx{v}}^{-1})|_{\vtx{s}} = \phi_{\vtx{v}}\circ P_\vtx{v}|_{\tree_{\vtx{v}}}\circ\phi_{\vtx{v}}^{-1}\mbox{ for every }{\vtx{s}}\in \tree_{q,q}.
$$
This holds because, recalling that $\vtx{v}$ is a semi-infinite string and $\vtx{s}$ is finite string, we have that $(\phi_{\vtx{v}}\circ P_\vtx{v}|_{\tree_{\vtx{v}}}\circ\phi_{\vtx{v}}^{-1})|_{\vtx{s}} = P|_{\vtx{v}\vtx{s}}$, with the concatenation $\vtx{v}\vtx{s}$ a semi-infinite string, and it has already been shown that $P|_{\vtx{v}\vtx{s}} = P|_\vtx{v}$. 
\end{proof}

Denoting the self-replicating group $P|_{\vtx{v}}$ acting on $\treeqq$ constructed in Proposition~\ref{prop:s.s.&s.r.} by $\Psr{P}$, we have established one direction of the correspondence $P\leftrightarrow \Psr{P}$. The other direction is established next.
\begin{proposition}
\label{prop:s.s.&s.r.2}
Let $\Psr{P}$ be a closed self-replicating subgroup of $\Isom(\tree_{q,q})$. Then there is $P\in \inftran$ compatible with the standard labelling of $\wtreeq$ and with $\Psr{P} = P|_{\vtx{v}}$ for every $\vtx{v}\in\gr{V}\treeq$. 
\end{proposition}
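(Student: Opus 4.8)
The idea is to reverse the construction of Proposition~\ref{prop:s.s.&s.r.}: starting from the abstract self-replicating group $\Psr{P}\leq \Isom(\tree_{q,q})$, I will build a scale group $P$ acting on $\wtreeq$ by gluing together copies of $\Psr{P}$ along the horospheres, using the standard labelling of $\wtreeq$ (Definition~\ref{defn:stan_labelling}) to identify each rooted subtree $\tree_{\tvtx{v}_n}$ with $\treeqq$. Recall from Remark~\ref{rem:rooted_subtrees} that each $\tree_{\vtx{v}}\cong \treeqq$ via $\phi_{\vtx{v}}$, and in particular each $\tree_{\tvtx{v}_n}$ carries a copy of $\Psr{P}$. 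The first step is to realise $\Psr{P}$, via $\phi_{\tvtx{v}_0}^{-1}\circ(-)\circ\phi_{\tvtx{v}_0}$, as a group $Q_0$ of isometries of the half-tree $\tree_{\tvtx{v}_0}$ fixing $\tvtx{v}_0$, and then to push it up one level: because $\Psr{P}$ is transitive on level~$1$ and self-replicating, for each child $\vtx{w}$ of $\tvtx{v}_0$ in $\tree_{\tvtx{v}_0}$ there is an element of $Q_0$ carrying $\tvtx{v}_0$ to $\vtx{w}$ and inducing (via $\psi$'s) the identity-up-to-$\Psr{P}$ on the subtree. I then take $P$ to be generated by $Q_0$, by $\tilde{x}_0$, and by $\tilde{x}_0^{-1} Q_0 \tilde{x}_0$, etc.; more precisely $P$ should be the closed subgroup of $\Isom(\wtreeq)_\omega$ of those $x = h\tilde{x}_0^n$ (using Proposition~\ref{prop:describe_autos}) whose restriction to every half-tree $\tree_{\vtx{v}}$, read through the standard labelling, lies in $\Psr{P}$.

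Concretely, I would \emph{define}
$$
P = \left\{ x\in\Isom(\wtreeq)_\omega \mid \phi_{x.\vtx{v}}\circ x\circ\phi_{\vtx{v}}^{-1}\in\Psr{P}\ \text{for every }\vtx{v}\in\gr{V}(\wtreeq)\right\}.
$$
The verification then breaks into three parts. (i) $P$ is a closed subgroup: closedness is immediate since $\Psr{P}$ is closed and the conditions are closed; the group axioms follow from the cocycle-type identity $\phi_{(xy).\vtx{v}}\circ xy\circ\phi_{\vtx{v}}^{-1} = (\phi_{x.(y.\vtx{v})}\circ x\circ\phi_{y.\vtx{v}}^{-1})(\phi_{y.\vtx{v}}\circ y\circ\phi_{\vtx{v}}^{-1})$ together with self-similarity of $\Psr{P}$ (so that the restriction condition at $\vtx{v}$ is equivalent to the restriction condition at any descendant, via the maps $\psi_\vtx{w}$). (ii) $P$ is a scale group: $\tilde{x}_0\in P$ since $\tilde{x}_0$ maps $\tree_{\tvtx{v}_n}$ isomorphically to $\tree_{\tvtx{v}_{n+1}}$ respecting the standard labelling, so $\phi_{\tilde{x}_0.\vtx{v}}\circ\tilde{x}_0\circ\phi_{\vtx{v}}^{-1} = \id\in\Psr{P}$; and for a vertex $\vtx{v}$ and each of its $q$ children $\vtx{w}_i$ in $\tree_{\vtx{v}}$, transitivity of $\Psr{P}$ on level~$1$ gives $g_i\in\Psr{P}$ with $g_i.\emptyset$ the $i$-th child, and lifting $g_i$ through $\phi_{\vtx{v}}$ and extending by $\tilde{x}_0$-conjugates on the rest of the tree produces $x_i\in P$ with $x_i.\vtx{v} = \vtx{w}_i$ — here I must check that such a lift exists as a bona fide isometry of all of $\wtreeq$ fixing $\omega$, which is exactly what self-replicating-ness ($\psi_\vtx{s}\circ(\Psr{P})_\vtx{s}\circ\psi_\vtx{s}^{-1} = \Psr{P}$) buys us recursively down the tree; then Proposition~\ref{prop:weak_criterion} applies. (iii) The standard labelling is compatible with $P$: conditions~\ref{defn:compatible_colour}.\ref{defn:compatible_colour1} and~\ref{defn:compatible_colour}.\ref{defn:compatible_colour2} hold for the standard labelling with $\vtx{v}_n = \tvtx{v}_n$ by construction (Definition~\ref{defn:stan_labelling}), and condition~\ref{defn:compatible_colour}.\ref{defn:compatible_colour3} is precisely the statement that the $x_i$ just constructed preserve labels, combined with transitivity, handled as in the proof of Proposition~\ref{prop:compatible_label}. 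Finally $\Psr{P} = P|_{\vtx{v}_0}$ is essentially a tautology from the defining condition together with Equation~\eqref{eq:s.s&s.r.} in the proof of Proposition~\ref{prop:s.s.&s.r.}: the inclusion $P|_{\vtx{v}_0}\subseteq\Psr{P}$ is the definition of $P$ at $\vtx{v} = \vtx{v}_0$, and the reverse inclusion uses self-replication to lift any $g\in\Psr{P}$ fixing $\vtx{v}_0$'s image to an element of $P_{\vtx{v}_0}$.

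\textbf{Main obstacle.} The delicate point is step (ii)/(iii): showing that an element $g\in\Psr{P}$ living only on a single rooted subtree $\tree_{\vtx{v}}$ actually extends to a global isometry of $\wtreeq$ lying in $P$ (in particular fixing $\omega$ and having restrictions in $\Psr{P}$ on \emph{every} half-tree, including those higher up towards $\omega$ and those in sibling subtrees). The clean way to do this is to extend $g$ by the identity outside $\tree_{\vtx{v}}$ — this is automatically an isometry fixing $\omega$ — and then check the restriction condition at every vertex: at vertices inside $\tree_{\vtx{v}}$ it follows from self-similarity of $\Psr{P}$; at $\vtx{v}^+$ and above, and in sibling subtrees, the restriction of the extended map is the identity, which lies in $\Psr{P}$ trivially. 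So in fact no hard analysis is needed once one commits to "extend by the identity," and the only thing to be careful about is bookkeeping with the maps $\phi_\vtx{v}$, $\psi_\vtx{w}$ and the horosphere indexing from Definition~\ref{defn:treeq+1}. I would also remark that $P$ so constructed is the \emph{largest} scale group with $P|_{\vtx{v}_0} = \Psr{P}$ for the standard labelling, which ties back to the claimed bijective nature of the correspondence.
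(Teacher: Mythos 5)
Your definition of $P$ coincides with the paper's (Equation~\eqref{eq:define_P}), and step (i) together with the observation $\tilde{x}_0\in P$ is fine; but the way you dispose of what you correctly identify as the main obstacle is not valid, and this breaks both the transitivity argument and the inclusion $\Psr{P}\subseteq P|_{\vtx{v}_0}$. If you transport $g\in\Psr{P}$ to $\tree_{\vtx{v}}$ via $\phi_{\vtx{v}}^{-1}$ and extend by the identity outside $\tree_{\vtx{v}}$, the resulting map $x$ must still pass the membership test for $P$ at the ancestors $\vtx{v}^{+},\vtx{v}^{2+},\dots$, and the restriction of $x$ to $\tree_{\vtx{v}^{k+}}$ is \emph{not} the identity, since $\tree_{\vtx{v}}\subset\tree_{\vtx{v}^{k+}}$. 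The test at $\vtx{v}^{k+}$ demands that the element of $\Isom(\treeqq)$ acting as $g$ on the subtree below one level-$k$ vertex and trivially everywhere else lie in $\Psr{P}$. Self-replication does not give this: it says only that \emph{some} element of $\Psr{P}$ fixing that vertex has state $g$ there, not one that is trivial on the sibling subtrees. Concretely, let $\Psr{P}$ be the closure of the binary odometer, which is self-replicating and isomorphic to $(\mathbb{Z}_2,+)$ (this example is recalled in the paper right after the proposition). With the wreath recursion $\tau=(1,\tau)\sigma$ one computes $\tau^{2b}=(\tau^{b},\tau^{b})$, so every element of $\Psr{P}$ fixing level~$1$ has equal states at the two level-$1$ vertices; hence $(\tau,1)$, i.e.\ ``$\tau$ below the vertex $0$ and trivial below $1$'', is not in $\Psr{P}$. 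Taking $g=\tau$ and $\vtx{v}=\tvtx{v}_0$, your identity-extension already fails the membership test at $\tvtx{v}_{-1}$, so it does not produce an element of $P$.

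The correct mechanism, which is what the paper's proof does, is to use surjectivity of the restriction maps $|_{0^{(n)}}:\Psr{P}_{0^{(n)}}\to\Psr{P}$ to choose inductively a \emph{coherent} sequence $g_n\in\Psr{P}_{0^{(n)}}$ with state $g$ at $0^{(n)}$ and with $\psi_{0^{(1)}}\circ g_{n+1}\circ\psi_{0^{(1)}}^{-1}=g_n$, and to define the global isometry on $\bigcup_{n}\tree_{\tvtx{v}_{-n}}=\gr{V}(\wtreeq)$ as the common extension of the maps $\phi_{\tvtx{v}_{-n}}^{-1}\circ g_n\circ\phi_{\tvtx{v}_{-n}}$; the point is precisely that the $g_n$ may, and in general must, act nontrivially on the sibling subtrees, and membership in $P$ at all ancestors then holds by construction. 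Your step (ii) also contains a slip that feeds into the same issue: no $g_i\in\Psr{P}$ can satisfy ``$g_i.\emptyset$ is the $i$-th child'', since every automorphism of a rooted tree fixes the root; what you need is a lift $h_i\in P_{\tvtx{v}_0}$ of an element of $\Psr{P}$ sending the level-$1$ vertex $0$ to $i$, so that $h_i\tilde{x}_0.\tvtx{v}_0=\tvtx{v}_0 i$, and producing such a lift $h_i$ again requires the coherent-sequence argument above before Proposition~\ref{prop:weak_criterion} can be invoked.
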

\begin{proof}
Let the section $|_\vtx{v}:\Isom(\treeq)\to\Isom(\treeqq)$ be as in~\eqref{eq:section} and define
\begin{equation}
\label{eq:define_P}
P = \left\{ x\in\Isom(\wtreeq) \mid x|_\vtx{v} \in \Psr{P}\mbox{ for all }\vtx{v}\in \gr{V}\wtreeq\right\}.
\end{equation}
Then $P$ is closed in $\Isom(\treeq)$ because the section is continuous and $\Psr{P}$ is closed in~$\Isom(\tree_{q,q})$, and is a subgroup because, for all $x,y\in\Isom(\treeq)$, $xy|_\vtx{v}\ =  x|_{y.\vtx{v}}y|_\vtx{v}$ and $\Psr{P}$ is a subgroup of $\Isom(\treeqq)$. 

To show that $\Psr{P} = P|_{{\vtx{v}}}$ it suffices to show that $\Psr{P} \leq P|_{{\vtx{v}}}$, because the reverse inclusion holds by definition, and, for this, it suffices to show that, given $\hat{g}\in \Psr{P}$, there is $g\in P$ such that $g|_{\vtx{v}_0} = \hat{g}$. Define, for $\vtx{v}\in \gr{V}\tree_{\vtx{v}_0}$, 
$$
g.\vtx{v} = \phi_{\vtx{v}_0}^{-1}(\hat{g}.\phi^{\phantom{-1}}_{\vtx{v}_0}(\vtx{v}))
$$ 
with $\phi_{\vtx{v}_0}$ as in paragraph~\ref{num:section_regular}. The fact that $\Psr{P}$ is self-replicating may now be used to extend $g$ inductively to $\tree_{\vtx{v}_{-n}}$ for each $n\geq0$ so that $g.\vtx{v}_0 = \vtx{v}_0$ and $\phi_{\vtx{v}_{-n}}\circ g\circ \phi_{\vtx{v}_{-n}}^{-1}\in \Psr{P}$. For the inductive step, note that the map $\phi_{\vtx{v}_{-n-1}}$ is an isomorphism $\tree_{\vtx{v}_{-n-1}}\to \treeqq$ which carries $\tree_{\vtx{v}_{-n}}$ to the subtree,  $\tree_\vtx{0}$, of $\treeqq$ spanned by $\left\{\vtx{v} = v_1v_2\dots v_r \in q^*\mid v_1=0\right\}$, see paragraph~\ref{num:rooted_section}. Since $\Psr{P}$ is self-replicating, there is $\hat{g}^{(n+1)}\in \Psr{P}$ such that $\hat{g}^{(n+1)}.\vtx{0} = \vtx{0}$ and $\hat{g}^{(n+1)}|_{\tree_{\vtx{0}}} = \phi_{\vtx{v}_{-n}}\circ g\circ \phi_{\vtx{v}_{-n}}^{-1}$. Then $\phi_{\vtx{v}_{-n-1}}\circ \hat{g}^{(n+1)}\circ \phi_{\vtx{v}_{-n-1}}$ is an extension of $g$ to $\tree_{\vtx{v}_{-n-1} }$ which stabilises $\vtx{v}_0$ and is such that $\phi_{\vtx{v}_{-n-1}}\circ g\circ \phi_{\vtx{v}_{-n-1}}^{-1} = \hat{g}^{(n+1)}\in \Psr{P}$. Since $\treeq = \bigcup_{n\geq0} \tree_{\mathbf{v}_{-n}}$, this induction extends $g$ to all of $\treeq$. Every $\vtx{v}$ belongs to $\tree_{\mathbf{v}_{-n}}$ for large enough $n$ and, for such $n$, $g|_\vtx{v}$ is a section of $\Psr{P}$ because $\phi_{\vtx{v}_{-n}}\circ g\circ \phi_{\vtx{v}_{-n}}^{-1} \in \Psr{P}$. Hence $g$ belongs to $P$ as required.

 It remains to show that $P$ is a scale group. The horosphere $\gr{V}_{0}$ is the increasing union of its intersections with $\tree_{\vtx{v}_{-n}}$ as $n\to\infty$. The intersection $\gr{V}_0\cap\tree_{\vtx{v}_{-n}}$ is precisely the $n^{\rm th}$ level of $\tree_{\vtx{v}_{-n}}$and so, since $\Psr{P}$ is transitive on each level of $\treeqq$, $\gr{V}_0$ is contained in a single $P$-orbit in $\treeq$. Next, notice that $\tilde{x}_0$ belongs to $P$ because $\tilde{x}_0|_\vtx{v} = \id$ for every $\vtx{v}\in \gr{V}(\treeq)$. Since $\tilde{x}_0^n.\gr{V}_0 = \gr{V}_n$, it follows that $P$ is transitive on $\gr{V}\treeq$ and hence is a scale group. 
\end{proof}
The maps $P\to\Psr{P}$ and $\Psr{P}\to P$ defined in Propositions~\ref{prop:s.s.&s.r.} and~\ref{prop:s.s.&s.r.2} are inverse to one another and there is thus a bijective correspondence $P\leftrightarrow \Psr{P}$ between scale groups with a given $P$-labelling and self-replicating groups. Illustrating this correspondence: taking $P = \Isom(\treeq)_\omega$, we have $\Psr{P} = \Isom(\treeqq)$; taking $P = H\rtimes\langle x_0\rangle$ as in Example~\ref{examp:scale_group}, $\Psr{P}$ is the image of the coordinatewise action of $S^{\mathbb{N}}$ on finite strings in $\{0,\dots,q-1\}$; and taking $\mathbb{Q}_p\rtimes\langle p\rangle$ as in Example~\ref{examp:p-adic}, $\Psr{P}$ is the closure of $\mathbb{Z}\curvearrowright\tree_{p+1}$ under the odometer action,~\cite[\S1.3.4]{Nekrash}, which is isomorphic to $(\mathbb{Z}_p,+)$. 

\subsection{Residue actions of a scale group}
\label{sec:residual}

Self-replicating groups are residually finite because they act on rooted trees. Each such group,~$\Psr{P}$, preserves the levels of the tree and hence there is, for each~$d\geq0$, a homomorphism~$\pi_d : \Psr{P}\to \Isom(\treeqq^{(d)})$, where $\treeqq^{(d)}$ is the  finite subtree of $\treeqq$ with depth~$d$. 

The correspondence $P\leftrightarrow \Psr{P}$ associates the finite groups $\pi_d(\Psr{P})$ with the scale group~$P$ and a given $P$-labelling of $\gr{E}\treeq$. These groups are not quotients of~$P$ but bear a similar relationship to $P$ as the residual field does to $\mathbb{Q}_p$ and will be called residue groups of~$P$. Residue groups are finite invariants of scale groups and will be used in Section~\ref{sec:tree_rep} to distinguish between them. 
\begin{definition}
\label{defn:residualaction}
The \emph{residue group} of $P\in\inftran$ is the permutation group on $\{0,\dots,q-1\}$ induced by the action of $\Psr{P}$ on level~$1$ of $\treeqq$, up to conjugation in $\Sym(q)$. The residue group of $P$ is denoted by $[P]$. 

More generally, the \emph{level~$d$ residue group} of $P\in\inftran$ is a subgroup of $\Isom(\treeqq^{(d)})$ induced by the action of $\Psr{P}$ on $\treeqq^{(d)}$, up to conjugation in $\Isom(\treeqq^{(d)})$. The level~$d$ residue group is denoted by $[P]^{(d)}$. 
\end{definition}

Self-replicating groups can be distinguished by seeing that their level $d$ residue groups differ for one~$d$. For this, it suffices to consider their actions on $\{0,\dots, q-1\}^d$, the set of leaves of $\treeqq^{(d)}$, or to distinguish their level $d$ residues as abstract groups. The next proposition facilitates that without needing to consider the action on $\treeqq^{(d)}$.
\begin{proposition}
\label{prop:dresidisquotient}
\begin{enumerate}
\item \label{prop:dresidisquotient1}
Let $\Psr{P}$ be a self-replicating group and let $\vtx{w}\in\gr{V}\treeqq$ be on level~$d$. Then the action of $\Psr{P}$ on $\{0,\dots, q-1\}^d$ is equivalent to the translation action of $\Psr{P}$ on the coset space $\Psr{P}/\Psr{P}_{\vtx{w}}$
\item \label{prop:dresidisquotient2}
Let $P\in\inftran$ and suppose that $x\in P$ has $s(x^{-1}) = q$. Then there is $\vtx{v}\in\gr{V}\treeq$ such that the action of $P_{\vtx{v}}$ on $P_{\vtx{v}}/(x^dP_{\vtx{v}}x^{-d})$ determines the same permutation group as the action of $[P]^{(d)}$ on $\{0,\dots, q-1\}^d$.
\end{enumerate}
\end{proposition}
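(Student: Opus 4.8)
The two parts are linked: part~\ref{prop:dresidisquotient1} is the purely combinatorial statement about rooted trees, and part~\ref{prop:dresidisquotient2} transports it to the scale group via the correspondence $P\leftrightarrow \Psr{P}$ of Propositions~\ref{prop:s.s.&s.r.} and~\ref{prop:s.s.&s.r.2}. I would prove part~\ref{prop:dresidisquotient1} first. The action of $\Psr{P}$ on $\{0,\dots,q-1\}^d$ is by definition the action on level~$d$ of $\treeqq$, i.e.\ on $\gr{V}(\treeqq^{(d)})$ restricted to the leaves, and the orbit of $\vtx{w}$ under $\Psr{P}$ is all of level~$d$ because a self-replicating group is transitive on every level (this follows by induction from level-1 transitivity together with self-replication, and should be recorded as a preliminary observation, or cited as standard). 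Since $\Psr{P}$ acts transitively on $\{0,\dots,q-1\}^d$ with point stabiliser $\Psr{P}_{\vtx{w}}$, the orbit--stabiliser correspondence gives a $\Psr{P}$-equivariant bijection $\{0,\dots,q-1\}^d \leftrightarrow \Psr{P}/\Psr{P}_{\vtx{w}}$; that is exactly the claimed equivalence of actions. The only subtlety is that ``equivalent as permutation groups'' needs the bijection to intertwine the actions, which it does by construction of the orbit--stabiliser map.

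For part~\ref{prop:dresidisquotient2}, the hypothesis $s(x^{-1}) = q$ should be unpacked: I expect it forces $x^{-1}$ to be hyperbolic with translation length~$1$ and $\omega$ as a repelling end for $x^{-1}$, equivalently $\omega$ is an attracting end for $x$ and $x$ translates a bi-infinite axis by distance~$1$ away from $\omega$; in the notation of \S\ref{sec:concrete} this means $x$ plays the role of (a conjugate of) $x_0^{-1}$, or rather $x^{-1}$ plays the role of $x_0$. Pick a vertex $\vtx{v}$ on the axis of $x$ on the side of the axis away from $\omega$, so that $\{\vtx{v}, x^{-1}.\vtx{v}\}$ is an edge with $x^{-1}.\vtx{v} = \vtx{v}^+$; then $x^{-d}.\vtx{v} = \vtx{v}^{d+}$ and the rooted subtree $\tree_{x^{-d}.\vtx{v}} = \tree_{\vtx{v}^{d+}}$ contains $\tree_{\vtx{v}}$, with $\tree_{\vtx{v}}$ sitting inside it as the subtree below the vertex $\vtx{v}$, which is on level~$d$ of $\tree_{\vtx{v}^{d+}}$. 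Under the isomorphism $\phi_{\vtx{v}^{d+}} : \tree_{\vtx{v}^{d+}} \to \treeqq$ this $\vtx{v}$ goes to a level-$d$ vertex $\vtx{w}$, and conjugation by $x^{d}$ identifies $\tree_{\vtx{v}^{d+}}$ with $\tree_{\vtx{v}}$ in a way compatible with the labelling, so that $x^{d} P_{\vtx{v}} x^{-d} = P_{x^{-d}.\vtx{v}} = P_{\vtx{v}^{d+}}$ fixes $\vtx{v}^{d+}$; more precisely I want $P_{\vtx{v}}$ acting on the coset space $P_{\vtx{v}}/(x^{d}P_{\vtx{v}}x^{-d})$. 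The key identification is that $x^{d}P_{\vtx{v}}x^{-d} \cap P_{\vtx{v}} $ is the stabiliser in $P_{\vtx{v}}$ of the level-$d$ vertex $\vtx{v}$ inside the rooted tree $\tree_{\vtx{v}^{d+}}$, hmm—I need to be careful about which coset space: the stabiliser of $\vtx{w}$ in $[P]^{(d)}$ pulls back along $\pi_d : \Psr{P}\to[P]^{(d)}$ and along Corollary~\ref{cor:s.s.&s.r.} ($P|_{\vtx{v}} = P_{\vtx{w}}|_{\tree_{\vtx{w}}}$) to the stabiliser of $\vtx{v}$ in $P_{\vtx{v}^{d+}}$, and I should show this equals the image of $x^{d}P_{\vtx{v}}x^{-d}$ inside $P_{\vtx{v}^{d+}}$ appropriately. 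Then Corollary~\ref{cor:s.s.&s.r.} together with part~\ref{prop:dresidisquotient1} finishes: the action of $[P]^{(d)}$ on $\{0,\dots,q-1\}^d$ is the action of $\Psr{P} = P_{\vtx{v}^{d+}}|_{\tree_{\vtx{v}^{d+}}}$ on $\Psr{P}/\Psr{P}_{\vtx{w}}$, which unwinds to the action of $P_{\vtx{v}}$ on $P_{\vtx{v}}/(x^{d}P_{\vtx{v}}x^{-d})$ after translating everything one level at a time and using that $x^{-d}$ conjugates $\tree_{\vtx{v}^{d+}}$ onto $\tree_{\vtx{v}}$ respecting the standard labelling.

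\textbf{Main obstacle.} The delicate point is the bookkeeping in part~\ref{prop:dresidisquotient2}: one must track how the isomorphisms $\phi_{\vtx{v}}$, the restriction maps $|_{\tree_{\vtx{v}}}$, and conjugation by powers of $x$ interact, and verify that the subgroup appearing in the statement, $x^{d}P_{\vtx{v}}x^{-d}$, really is (up to the identifications of Corollary~\ref{cor:s.s.&s.r.}) the stabiliser of the relevant level-$d$ vertex, so that orbit--stabiliser from part~\ref{prop:dresidisquotient1} applies verbatim. A clean way to organise this is: first fix $\vtx{v}$ on the axis of $x$ below $\omega$; second show $x^{-d}.\vtx{v} = \vtx{v}^{d+}$ and that $x^{-d}|_{\tree_{\vtx{v}^{d+}}}$ is a labelling-preserving isomorphism $\tree_{\vtx{v}^{d+}}\to\tree_{\vtx{v}}$ (this is where $s(x^{-1})=q$ is used, guaranteeing $x$ translates along an axis with $\omega$ attracting so that $x^{d}$ does not disturb the labelling—strictly I may need to pass to a conjugate/adjust the labelling so that the axis of $x$ carries the $0$-labelled path of Definition~\ref{defn:compatible_colour}.\ref{defn:compatible_colour2}, which is harmless); third, apply Corollary~\ref{cor:s.s.&s.r.} with $\vtx{w}$ there equal to $\vtx{v}^{d+}$ to get $\Psr{P} = P_{\vtx{v}^{d+}}|_{\tree_{\vtx{v}^{d+}}}$ and note that under $\phi_{\vtx{v}^{d+}}$ the level-$d$ leaf set is identified with $\{0,\dots,q-1\}^d$ and the stabiliser of $\vtx{v}$ with the preimage of $x^{d}P_{\vtx{v}}x^{-d}$; fourth, invoke part~\ref{prop:dresidisquotient1}. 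The genuinely non-formal content is step two; everything else is an unwinding of definitions, and once step two is in place the proof is short.
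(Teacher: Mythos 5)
Part~\ref{prop:dresidisquotient1} of your proposal is the paper's argument: transitivity of $\Psr{P}$ on level~$d$ (the paper treats this as immediate; your short induction from level-$1$ transitivity and self-replication is the standard justification) plus orbit--stabiliser gives the $\Psr{P}$-equivariant bijection $\Psr{P}/\Psr{P}_{\vtx{w}}\to\{0,\dots,q-1\}^d$. Nothing more is needed there.

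In part~\ref{prop:dresidisquotient2} there is a genuine gap, located exactly where you say ``hmm---I need to be careful about which coset space''. The dynamics are backwards in your sketch: $s(x^{-1})=q$ means $\omega$ is the \emph{repelling} end of $x$, so $x$ plays the role of $x_0$ (not $x_0^{-1}$) and $x.\vtx{v}$ is a \emph{child} of $\vtx{v}$ for $\vtx{v}$ on the axis. Hence $x^dP_{\vtx{v}}x^{-d}=P_{x^d.\vtx{v}}$, the stabiliser of the level-$d$ descendant of $\vtx{v}$ along the axis, which is a subgroup of $P_{\vtx{v}}$; your identity $x^dP_{\vtx{v}}x^{-d}=P_{x^{-d}.\vtx{v}}=P_{\vtx{v}^{d+}}$ is false (that group \emph{contains} $P_{\vtx{v}}$, so the coset space in the statement would not even parse), and likewise it is $x^{d}$, not $x^{-d}$, that carries $\tree_{\vtx{v}^{d+}}$ onto $\tree_{\vtx{v}}$. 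Your ``key identification'' --- that $x^dP_{\vtx{v}}x^{-d}\cap P_{\vtx{v}}$ is the stabiliser in $P_{\vtx{v}}$ of the level-$d$ vertex $\vtx{v}$ --- is vacuous as written ($P_{\vtx{v}}$ stabilises $\vtx{v}$), so the crucial step is never actually established. The paper's proof avoids the detour through the ancestor $\vtx{v}^{d+}$ altogether: build a $P$-labelling from $x$ as in Proposition~\ref{prop:compatible_label} (so $\Psr{P}$ is defined and $x$ preserves the labelling on $\tree_{\vtx{v}}$), observe $x^dP_{\vtx{v}}x^{-d}=P_{x^d.\vtx{v}}\leq P_{\vtx{v}}$ and that $P_{x^d.\vtx{v}}$ contains the kernel of the surjection $g\mapsto g|_{\vtx{v}}:P_{\vtx{v}}\to\Psr{P}$ given by Corollary~\ref{cor:s.s.&s.r.}; this surjection then induces a $P_{\vtx{v}}$-equivariant bijection $P_{\vtx{v}}/P_{x^d.\vtx{v}}\to\Psr{P}/\Psr{P}_{\vtx{w}}$ with $\vtx{w}=\phi_{\vtx{v}}(x^d.\vtx{v})$, and part~\ref{prop:dresidisquotient1} finishes. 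Your plan could be repaired in the spirit you intend --- work in $\tree_{\vtx{v}^{d+}}$, where $\vtx{v}$ lies on level $d$, obtain the permutation group of $P_{\vtx{v}^{d+}}$ on $P_{\vtx{v}^{d+}}/P_{\vtx{v}}$, and conjugate by $x^{-d}$, which sends the pair $\bigl(P_{\vtx{v}},\,x^dP_{\vtx{v}}x^{-d}\bigr)$ to $\bigl(P_{\vtx{v}^{d+}},\,P_{\vtx{v}}\bigr)$ --- but even that version needs the surjectivity-plus-kernel argument above to pass from $P$-stabilisers to $\Psr{P}$-stabilisers, which your sketch leaves out, and the identities you currently rely on are incorrect as stated.
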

\begin{proof} 
 \ref{prop:dresidisquotient1}. Self-replicating groups are transitive on each level of $\treeqq$ by definition. Hence the map $g\Psr{P}_{\vtx{w}}\mapsto g.\vtx{w}$ is a bijection $\Psr{P}/\Psr{P}_{\vtx{w}}\to \{0,\dots, q-1\}^d$ and is $\Psr{P}$-equivariant.
 
\ref{prop:dresidisquotient2}. Since $s(x^{-1}) = q$, $x$ is a translation in the direction away from $\omega$. Choose $\vtx{v}$ on the axis of translation by~$x$. Then $x.\vtx{v}$ is a child of $\vtx{v}$ in $\tree_{\vtx{v}}$ and we may choose $x_1,\ \dots,\ x_{q-1}\in P$ that send $\vtx{v}$ to each of the other children of $\vtx{v}$ and define a $P$-labelling of $\treeq$ that is preserved by~$x$ as in Proposition~\ref{prop:s.s.&s.r.}. Then $x^dP_{\vtx{v}}x^{-d} = P_{x^d.\vtx{v}}$ and, moreover, $P_{x^d.\vtx{v}}$ contains the kernel of the restriction map $g\mapsto g|_{\tree_\vtx{v}}: P_{\vtx{v}} \to \Psr{P}$. This map is a surjective homomorphism, by Proposition~\ref{prop:s.s.&s.r.}, and hence induces a $P_{\vtx{v}}$-equivariant bijection $P_{\vtx{v}}/P_{x^d.\vtx{v}} \to \Psr{P}/\Psr{P}_{\vtx{w}}$ with $\vtx{w} = \phi_{\vtx{v}}(x^d.\vtx{v})$.
The claim follows from part~\ref{prop:dresidisquotient1}. 
\end{proof}

  \begin{remark}
\label{rem:residue}
The scale groups seen in Examples~\ref{examp:scale_group2} and~\ref{examp:scale_group} have residue group equal to~$S$. Indeed, it may be seen that $P^{(S)}$ of Example~\ref{examp:scale_group2} is a maximal scale group with residue~$S$, and that $H\rtimes \langle x_0\rangle$ of Example~\ref{examp:scale_group} is minimal with this residue. 
The groups $\mathbb{Q}_p\rtimes\langle ap\rangle$ of Example~\ref{examp:p-adic} are not all isomorphic but all have residue $(\mathbb{F}_p,+)$, while $\mathbb{Q}_p\rtimes\mathbb{Q}^\times_p$ has residue $\mathbb{F}_p\rtimes\mathbb{F}^\times_p$. These groups are minimal for their respective residues.
\end{remark}

\section{The tree representation theorem}
\label{sec:tree_rep}

The tree representation theorem, \cite[Theorem~4.1]{BW}, associates a scale group to a subgroup tidy for an element of a t.d.l.c.~group. Concrete isometry groups of trees thus emerge from the abstract theory of t.d.l.c.~groups. This section describes the representation in terms of the tree labellings given above, and examines the dependence of the scale group on the choice of tidy subgroup. 

A few definitions are needed for the statement the theorem. For $G$ a t.d.l.c.~group, $x\in G$ and a compact open subgroup $V\leq G$, define subgroups 
$$
V_{\pm} := \bigcap_{n\geq0} x^{\pm n} Vx^{\mp n}\mbox{ and }V_{--} := \bigcup_{n\in\mathbb{Z}} x^n V_-x^{-n}.
$$ 
Then $V$ is \emph{tidy for~$x$} if $V = V_+V_-$ and $V_{--}$ is closed. Subgroups tidy for~$x$ exist, and the \emph{scale of~$x$}, which is a structural invariant for $G$ and does not depend on~$V$, is the positive integer $s(x) = [xV_+x^{-1} : V_+]$, see~\cite{GWil}. 

Tidy subgroups and the scale have been used to answer numerous questions about general t.d.l.c.~groups. The next theorem associates more detailed information with~$x$ than the scale value. 
\begin{theorem}[Tree Representation]
\label{thm:tree_rep}
Let $G$ be a t.d.l.c.~group, $x\in G$ and $V\leq G$ be a compact open subgroup tidy for~$x$. Suppose that $s(x^{-1}) = q$ is greater than $1$. Then there is a continuous homomorphism 
$$
\rho : V_{--}\rtimes \langle x\rangle \to \Isom(\treeq)_\omega
$$ 
with $\omega\in\partial\tree_{q+1}$. Moreover: $V$ stabilises a vertex in $\gr{V}\tree_{q+1}$; $\ker\rho$ is the largest compact normal subgroup of $V_{--}\rtimes \langle x\rangle$; and $\rho(V_{--}\rtimes \langle x\rangle)$ is a closed subgroup of $\Isom(\treeq)_\omega$ and is transitive on $\gr{V}\tree_{q+1}$.\endproof
\end{theorem}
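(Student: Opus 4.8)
The plan is to build the tree from a two-sided descending chain of compact subgroups of $V_{--}$ and let $M := V_{--}\rtimes\langle x\rangle$ act on the associated coset tree. Put $L_n := x^n V_- x^{-n}$ for $n\in\mathbb{Z}$. One has $xV_-x^{-1} = xVx^{-1}\cap V_-\subseteq V_-$ for any compact open $V$, so $\{L_n\}_{n\in\mathbb{Z}}$ is a descending chain, $L_{n+1}\subseteq L_n$, with $\bigcup_{n}L_n = V_{--}$; and because $V$ is tidy, the scale of $x^{-1}$ is computed on $V_- = V_+^{(x^{-1})}$, so $[L_n:L_{n+1}] = [V_-:xV_-x^{-1}] = [x^{-1}V_-x:V_-] = s(x^{-1}) = q$ for every $n$. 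Each $L_n$ is compact and open in $V_{--}$. Let $T$ be the graph with vertex set $\gr{V}(T) = \bigsqcup_{n\in\mathbb{Z}} V_{--}/L_n$, with $gL_n$ placed at ``level~$n$'', and with an edge joining each $gL_n$ to its parent $gL_{n-1}$ (well defined since $L_n\subseteq L_{n-1}$). Then every vertex has one parent and $q$ children, so $T$ is $(q+1)$-regular; and $T$ is a tree because the parent map eventually identifies all of $V_{--}/L_m$ as $m\to-\infty$ (since $g^{-1}g'\in V_{--}=\bigcup_m L_{-m}$ for all $g,g'$), so every vertex has a unique reduced path to a common end $\omega$, and hence any two vertices are joined by a unique geodesic. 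Thus $T\cong\treeq$.

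Next I define $\rho$. Let $V_{--}$ act by left translation, $h\cdot gL_n = (hg)L_n$, and let $x$ act by $x\cdot gL_n = (xgx^{-1})L_{n+1}$. A direct check shows both maps are well defined, bijective on each level, and commute with the parent map, so both act by automorphisms of $T$; the left translations fix $\omega$, and $x$ is the unit translation along the axis $\{L_n\}_{n\in\mathbb{Z}}$, directed away from $\omega$, so it fixes $\omega$ too. Since $\rho(x)\rho(h) = \rho(xhx^{-1})\rho(x)$ holds identically on cosets, these two actions assemble into a group homomorphism $\rho : M\to\Isom(\treeq)_\omega$, and $\rho$ is continuous because the preimage of each vertex stabiliser $\Isom(T)_v$ is the open subgroup $\stab_M(v)$ of $M$ (for instance $\rho^{-1}(\Isom(T)_{L_0}) = V_-$).

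That $\rho(M)$ is transitive on $\gr{V}(T)$ is immediate: $x^{m-n}$ moves $gL_n$ to a coset at level $m$, and a left translation then sends it to any prescribed $g'L_m$. For closedness it is enough to know that the vertex stabiliser $\rho(M)_{L_0}$ is compact, for any vertex-transitive subgroup $H\leq\Isom(T)$ with a compact vertex stabiliser is closed: if $g\in\overline{H}$ fixes a vertex $v$ then a net in $H$ converging to $g$ eventually lies in $\Isom(T)_v$, hence in the closed group $H_v$, so $g\in H_v$; thus $\overline{H}_v = H_v$, and decomposing $\overline{H}$ and $H$ over the common vertex orbit with coset representatives taken from $H$ gives $\overline{H}=H$. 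Here $\rho(M)_{L_0} = \rho(\stab_M(L_0))$, and $\stab_M(L_0)=V_-$ (an element of $M$ fixing $L_0$ must preserve levels, hence lie in $V_{--}$, and stabilise the coset $L_0$, hence lie in $L_0 = V_-$), which is compact; so $\rho(M)_{L_0}$ is compact and $\rho(M)$ is closed. Being closed, end-fixing and transitive on $\gr{V}(T)$, $\rho(M)$ is a scale group.

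Finally, $\ker\rho$ consists of the elements acting trivially on every coset, so $\ker\rho = \bigcap_{h\in V_{--},\,n\in\mathbb{Z}} hL_n h^{-1} = \bigcap_{m\in M} mV_- m^{-1}$, the core of $V_-$ in $M$; it is a closed subgroup of the compact group $V_-$, hence compact, and it is normal in $M$. To see it is the \emph{largest} compact normal subgroup, let $N\trianglelefteq M$ be compact. Its image in $M/V_{--}\cong\mathbb{Z}$ is finite, hence trivial, so $N\leq V_{--}=\bigcup_m L_{-m}$; compactness then gives $N\leq L_{-m_0}$ for some $m_0$, and $x$-invariance of $N$ gives $N = x^{m_0}Nx^{-m_0}\leq x^{m_0}L_{-m_0}x^{-m_0} = V_-$; since $N$ is also normal in $M$, it lies in the core of $V_-$, i.e. $N\subseteq\ker\rho$. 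I expect the only steps needing real care to be those where tidiness is actually invoked: the equality of the valency with $s(x^{-1})+1$, which rests on the minimality $V=V_+V_-$, and the closedness of $V_{--}$, which is what makes $M$ a bona fide t.d.l.c.\ group and $\rho$ continuous; the rest is bookkeeping with cosets and the permutation topology on $\Isom(\treeq)$.
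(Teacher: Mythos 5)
Your construction is exactly the one the paper follows in \S\ref{sec:tree_construction} (and in \cite{BW}): the coset tree $\bigsqcup_{n\in\mathbb{Z}} V_{--}/x^nV_-x^{-n}$ with $V_{--}$ acting by left translation and $x$ by twisted shift, with valency $q+1$ coming from tidiness via $[V_-:xV_-x^{-1}]=s(x^{-1})$, so this is essentially the same approach and your verification is correct. The extra arguments you supply for closedness of the image and for $\ker\rho$ being the normal core of $V_-$ (hence the largest compact normal subgroup) are sound and fill in the parts the paper delegates to the citation \cite{BW}.
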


Thus, a t.d.l.c.~group $G$ has a subquotient isomorphic to a scale group once it has an element, $x$, with $s(x^{-1})>1$. The homomorphism $\rho$ sends $x$ to a translation of $\treeq$ that has $\omega$ as a repelling end, and sends $V_{--}$ to a group of elliptic elements in $\Isom(\treeq)_\omega$. (If the definition of scale groups were expanded to include the group of translations of a line, then these claims would hold for $s(x^{-1})=1$ too but would not convey any information.)

Conversely, let $P$ be a scale group acting on $\treeq$ and choose $x\in P$ such that $x.\vtx{v}_0 = \vtx{v}_{1}$. Then $V := P_{\vtx{v}_0}$ is tidy for $x$ and $s(x^{-1}) = q$. Further, $V_{--}$ is equal to  $P^{(e)}$ and $P = V_{--}\rtimes \langle x\rangle$. 

The following characterises scale groups up to group isomorphism. It is similar in spirit to \cite[Theorems 4.1 \& 7.1]{CaCoMoTe}.
\begin{corollary}
\label{cor:tree_rep}
The locally compact group $G$ has a faithful representation as a scale group if and only if it satisfies:
\begin{enumerate}
\item \label{cor:tree_rep1}
$G$ has no non-trivial compact normal subgroup;
\item \label{cor:tree_rep2}
there is an open normal subgroup $H\triangleleft G$ such that $G/H \cong\mathbb{Z}$; and
\item \label{cor:tree_rep3}
there are a compact open subgroup $V\leq G$ and an element $x\in G$ such that $xVx^{-1} \leq V$ and $\bigcup_{n\in\mathbb{Z}} x^nVx^{-n} = H$.
\end{enumerate}
Any $G$ satisfying \ref{cor:tree_rep1}.--\ref{cor:tree_rep3}. is isomorphic to a scale subgroup of $\Isom(\treeq)_{\omega}$ with $q = \min \left\{s(y) \mid y\in G \mbox{ with }s(y)\ne1\right\}$. \endproof
\end{corollary}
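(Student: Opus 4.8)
The plan is to prove the two directions of the "if and only if" by translating between the abstract conditions \eqref{cor:tree_rep1}--\eqref{cor:tree_rep3} and the Tree Representation Theorem. For the forward direction, suppose $G$ has a faithful representation as a scale group $P \leq \Isom(\treeq)_\omega$. Condition \eqref{cor:tree_rep1} holds because a non-trivial compact normal subgroup of $P$ would be a non-trivial group of elliptic isometries fixed setwise under conjugation; using vertex-transitivity one checks such a subgroup would have to fix the whole tree, contradicting faithfulness. For \eqref{cor:tree_rep2} and \eqref{cor:tree_rep3}, apply Proposition~\ref{prop:semidirect} and Proposition~\ref{prop:scale_gp_props}: pick any $\vtx{v}_0$, set $V = \stab_P(\vtx{v}_0)$, and take $x\in P$ with $x.\vtx{v}_0 = \vtx{v}_{1}$ so that $xVx^{-1} < V$ with index $q$; then $H := \bigcup_{n\in\Z} x^{n}Vx^{-n} = P^{(e)}$ is open (it contains $V$), normal (it is $P^{(e)}$, which is normal by Proposition~\ref{prop:semidirect}), and $P = H \rtimes \langle x\rangle$ gives $G/H \cong \Z$.

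For the converse, assume \eqref{cor:tree_rep1}--\eqref{cor:tree_rep3}. The element $x$ from \eqref{cor:tree_rep3} satisfies $xVx^{-1}\le V$, so $V$ is already tidy for $x^{-1}$ in a trivial way: $V_{+}$ (in the sense of the Tree Representation Theorem, built from $x^{-1}$) equals $\bigcap_{n\ge 0} x^{-n}Vx^{n}$, which is just $V$ since the subgroups $x^{-n}Vx^{n}$ are increasing; and $V_{-} = \bigcap_{n\ge 0}x^{n}Vx^{-n}$, a decreasing intersection; $V = V_{+}V_{-}$ holds trivially as $V_{+}=V$, and $V_{--} = \bigcup_{n\in\Z}x^{n}V_{-}x^{-n}$ — here I would identify $V_{--}$ with $H$ using condition \eqref{cor:tree_rep3}, which forces $V_- $ and the $x$-conjugates to exhaust $H$, and $H$ is closed since it is open. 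Also $s(x^{-1}) = [x^{-1}V_+ x : V_+] = [x^{-1}Vx : V] = [V : xVx^{-1}]$, which is finite; call it $q$. I then apply the Tree Representation Theorem to $x^{-1}$: it yields $\rho : V_{--}\rtimes\langle x\rangle = H \rtimes \langle x\rangle \to \Isom(\treeq)_\omega$ with image a closed vertex-transitive subgroup fixing an end, i.e.\ a scale group, and $\ker\rho$ the largest compact normal subgroup of $H\rtimes\langle x\rangle$. Here I must argue $H\rtimes\langle x\rangle = G$ (from \eqref{cor:tree_rep2}) and that $\ker\rho$ is trivial: a compact normal subgroup of $G$ is trivial by \eqref{cor:tree_rep1}, so $\rho$ is a faithful, and hence by properness an isomorphism onto a scale group.

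For the final sentence, once $G$ is realised as a scale group on $\treeq$, the translation $x$ of length $1$ satisfies $s(x) = q$ by Proposition~\ref{prop:scale_gp_props}, and any $y\in G$ with $s(y)\ne 1$ is a hyperbolic isometry whose scale equals $q^{\ell}$ where $\ell\ge 1$ is its translation length (computed from the index $[\stab_G(\vtx{v}) : y\stab_G(\vtx{v})y^{-1}]$ along its axis), so the minimum non-trivial scale is exactly $q$; symmetrically $s(y^{-1})$ for $y$ hyperbolic toward $\omega$ also gives powers of $q$. The main obstacle I anticipate is the bookkeeping in the converse identifying $V_{--}$ precisely with $H$ and confirming $V$ is genuinely tidy for $x^{-1}$ in the technical sense of \cite{GWil} (one must check both $V = V_+V_-$ and closedness of $V_{--}$, the latter coming from $H$ being open hence closed); everything else is a fairly direct transcription between the tree picture and the scale-theoretic conditions.
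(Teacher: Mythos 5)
Your overall route is the one the paper intends: the corollary is stated as an immediate consequence of Theorem~\ref{thm:tree_rep} together with the paragraph preceding it, and your forward direction (triviality of compact normal subgroups via vertex-transitivity, then Propositions~\ref{prop:semidirect} and~\ref{prop:scale_gp_props} giving $V$, $x$, $H=P^{(e)}$ and $G/H\cong\Z$) and your computation of $q$ as the minimal non-trivial scale are essentially correct, up to the directional slip that for the length-one translation $x$ away from $\omega$ one has $s(x)=1$ and $s(x^{-1})=q$, not $s(x)=q$.

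The converse, however, contains a step that fails as written: you tidy for $x^{-1}$ and in doing so swap $V_+$ and $V_-$. With your conventions $V_-=\bigcap_{n\geq0}x^nVx^{-n}$, and then $V_{--}=\bigcup_{n\in\Z}x^nV_-x^{-n}$ is \emph{not} $H$: it is contained in $H$ but is in general a small $x$-invariant subgroup. Concretely, for $G=\Q_p\rtimes\langle p\rangle$ with $V=\Z_p$ and $x$ acting as multiplication by $p$, your $V_-$ is $\bigcap_{n\geq0}p^n\Z_p=\{0\}$ and hence your $V_{--}=\{0\}\neq\Q_p=H$; in the model case of a scale group, your $V_{--}$ is just the fixator of the axis of $x$. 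So the identification ``$V_{--}=H$'' is false under your definitions, and in addition Theorem~\ref{thm:tree_rep} applied to the element $x^{-1}$ would require $s((x^{-1})^{-1})=s(x)>1$, which is not among the hypotheses and is equal to $1$ in these examples. The repair is to apply the theorem to $x$ itself with the paper's conventions: since $xVx^{-1}\leq V$, one has $V_-=\bigcap_{n\geq0}x^{-n}Vx^{n}=V$ (increasing conjugates), $V_+=\bigcap_{n\geq0}x^{n}Vx^{-n}$, $V=V_+V_-$ trivially, $V_{--}=\bigcup_{n\in\Z}x^nVx^{-n}=H$ which is closed because open, and $s(x^{-1})=[x^{-1}V_-x:V_-]=[V:xVx^{-1}]=q$ --- exactly the hypothesis $s(x^{-1})=q>1$ of Theorem~\ref{thm:tree_rep}, which then represents $H\rtimes\langle x\rangle$ faithfully (by condition~\eqref{cor:tree_rep1}) as a scale group. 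A secondary point: ``$H\rtimes\langle x\rangle=G$'' does not follow from condition~\eqref{cor:tree_rep2} alone, since conditions~\eqref{cor:tree_rep1}--\eqref{cor:tree_rep3} as literally stated allow $xH$ to generate a proper subgroup of $G/H$ (take $x=t^2$ and $V=\Z_p$ in $\Q_p\rtimes\langle t\rangle$); you should either read~\eqref{cor:tree_rep3} with $x$ a generator modulo $H$ --- which is what your forward direction produces --- or add an argument replacing $x$ by such a generator.
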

The representation of $G$ as a scale group need not be unique. Different choices of the compact open subgroup $V$ in the tree representation theorem may produce different actions of $G$ on $\treeq$, as is seen in \S\ref{sec:Pnotunique}.  

\subsection{Construction of the tree and compatible labellings}
\label{sec:tree_construction}

Theorem~\ref{thm:tree_rep} is proved by constructing the tree $\treeq$ from $x$ and $V_{--}$. One approach is to observe that the group $V_{--}\rtimes \langle x\rangle$ is isomorphic to the HNN-extension $V_-*_\alpha$ with $\alpha: V_-\to V_-$ being the endomorphism $\alpha(v) = xvx^{-1}$ and that $\treeq$ is the Bass-Serre tree for this HNN-extension. Since the index $[V_-:xV_-x^{-1}] =s(x^{-1}) = q$ is finite, this tree is locally finite and has valency equal to $q+1$. An equivalent approach that constructs the tree directly from  $V_{--}$ (which does not have to be constructed in the course of the argument because it is given to us as a subgroup of $G$) is followed in~\cite{BW} and here. This approach also constructs the tree in a form that matches the concrete description in Definition~\ref{defn:treeq+1} and labels the edges as in Proposition~\ref{prop:compatible_label}.

	For convenience, the conjugation automorphism $y\mapsto xyx^{-1}$ will be denoted by $\alpha$ and the scale $s(x^{-1})$ by~$q$. Denote $\gr{W}_n = V_{--}/\alpha^{n+1}(V_-)$, put 
$$
\gr{W} = \bigsqcup_{n\in\mathbb{Z}} \gr{W}_n\mbox{ and }\gr{E} = \left\{ \{g\alpha^n(V_-),g\alpha^{n+1}(V_-)\}\in \gr{W}^2 \mid n\in\mathbb{Z},\ g\in V_{--}\right\};
$$
and define $\tree^{(\alpha)}$ to be the graph $(\gr{W},\gr{E})$. The map $g\alpha^{n+1}(V_-)\mapsto g\alpha^{n}(V_-)$ on $\gr{W}(\tree^{(\alpha)})$ is $q$-to-$1$. Hence $g\alpha^{n+1}(V_-)\in\gr{W}_n$ has $q+1$ neighbours, namely: $g\alpha^{n}(V_-)\in\gr{W}_{n-1}$; and~$g\alpha^{n+1}(g_i)\alpha^{n+2}(V_-)\in\gr{W}_{n+1}$ where $g_i$, ${0\leq i\leq q-1}$, are coset representatives for $V_-/\alpha(V_-)$. Hence $\tree^{(\alpha)}$ is isomorphic to $\treeq$. 

Choosing a fixed set  $\{g_i\}_{0\leq i\leq q-1}$ of coset representatives for $V_-/\alpha(V_-)$, with $g_0 = \id$, specifies an isomorphism $\phi:\tree^{(\alpha)}\to \wtreeq$ as follows. The set $\{g\alpha^{m+1}(V_-)\}_{m\leq n}$ is a ray in $\tree^{(\alpha)}$ for each $g\in V_{--}$, and there is $m$ such that 
$$
g\alpha^{m'+1}(V_-) = \alpha^{m'+1}(V_-) = g_0\alpha^{m'+1}(V_-)\text{ for all }m'\leq m
$$ 
because $V_{--}=\bigcup_{m\in\mathbb{Z}}\alpha^m(V_-)$ and  $\alpha^{m-1}(V_-)>\alpha^m(V_-)$. Furthermore, it follows by induction on $n-m$ that there are unique $i_k\in \{0,\dots, q-1\}$ for all $m\leq k\leq n$ such that
\begin{equation}
\label{eq:productforg}
g\alpha^{n+1}(V_-) = \alpha^{m+1}(g_{i_{m+1}})\dots\alpha^{n}(g_{i_n})\alpha^{n+1}(V_-).
\end{equation}
Define 
$$
\phi(g\alpha^{n+1}(V_-))= \{i_m\}_{m\leq n}\in \{0,\dots,q-1\}^{(-\infty,n]}.
$$ 
Then $\phi$ maps $\gr{W}(\tree^{(\alpha)})$ to $\gr{V}(\wtreeq)$ and is a graph isomorphism because 
$$
\phi(g\alpha^{n+1}(V_-))^+ = \phi(g\alpha^n(V_-)).
$$ 
In particular, the rays $\{g\alpha^{m+1}(V_-)\}_{m\leq n}$ all belong to the same end of $\tree^{(\alpha)}$.
%
%

Next we see that $V_{--}\rtimes \langle x\rangle$ acts on $\tree^{(\alpha)}$ as a scale group. The group $V_{--}$ acts transitively on each $\gr{W}_n$ by translation because $\gr{W}_n$ is the quotient of $V_{--}$ by $\alpha^{n+1}(V_-)$.
Define an action of $x$  on $\gr{W}$ by
$$ 
\label{eq:V--action2}
x.(g\alpha^{n+1}(V_-)) = \alpha(g\alpha^{n+1}(V_-)) = \alpha(g)\alpha^{n+2}(V_-)\quad (g\in V_{--},\ n\in\mathbb{Z}).
$$
The actions of $V_{--}$ and $x$ extend to the semi-direct product and so the fact that $x.\gr{W}_n = \gr{W}_{n+1}$ implies that $V_{--}\rtimes \langle x\rangle$ is transitive on $\gr{W}$. Since
\begin{align*}
x.\{g\alpha^n(V_0-),g\alpha^{n+1}(V_-)\} &= \{\alpha(g)\alpha^{n+1}(V_0-),\alpha(g)\alpha^{n+2}(V_-)\}\\
\intertext{ and, for every $h\in V_{--}$, }
h.\{g\alpha^n(V_0-),g\alpha^{n+1}(V_-)\} &= \{(hg)\alpha^n(V_0-),(hg)\alpha^{n+1}(V_-)\}.
\end{align*}
$V_{--}\rtimes\langle x\rangle$ acts by tree isometries. Let $\rho : V_{--}\rtimes\langle x\rangle \to \Isom(\tree^{(\alpha)})$ be the induced homomorphism. To see that $\rho(V_{--}\rtimes\langle x\rangle)$ is closed, it suffices to show that $\rho(V_{--})$ is closed. That is so because, if $\rho(g_n)\to w\in \Isom(\tree^{(\alpha)})$, with $g_n\in V_{--}$, then it may be supposed that $\rho(g_n).V_- = w.V_-$ for all $n$ whence, since $V_-$ is compact, $\{g_n\}_{n\geq0}$ has an accumulation point $g\in V_{--}$ and $w=\rho(g)$. This completes the proof that $\rho(V_{--}\rtimes \langle x\rangle)$ is a scale group.

The standard labelling of $\tree_{q+1}$ pulled back to $\tree^{(\alpha)}$ via $\phi$ gives 
\begin{align*}
c_{g\alpha^{n+1}(V_-)}(g\alpha^{n+1}(V_-),g\alpha^{n}(V_-)) &=q  \quad \text{ and}\\
c_{g\alpha^{n+1}(V_-)}(g\alpha^{n+1}(V_-),g\alpha^{n+2}(V_-)) &= i_{n+1} \text{ for }i\in\{0,\dots,q-1\},
\end{align*} 
with $i_{n+1}$ as in Equation~\eqref{eq:productforg}. This labelling is compatible with $V_{--}\rtimes\langle x\rangle$ because: $\{\mathbf{v}_n=\alpha^{n+1}(V_-)\}_{n\in\mathbb{Z}}$ satisfies $\mathbf{v}_n = \mathbf{v}_{n+1}^+$ and $c_{\mathbf{v}_n}(\mathbf{v}_n,\mathbf{v}_{n+1})=0$ for all $n\in\mathbb{Z}$; and, given $g\alpha^{k+1}(V_-)$ and $h\alpha^{n+1}(V_-)$ in $\gr{W}$, then $z = h\alpha^{n-k}(g^{-1})x^{n-k}$ satisfies $z.g\alpha^{k+1}(V_-) = h\alpha^{n+1}(V_-)$ and preserves the labelling if the coset representatives $h$ and $g$ are chosen to be the products on the right of Equation~\eqref{eq:productforg}.

\subsection{Uniqueness of scale group representations}
\label{sec:sc.gp.rep._ uniqueness}

The scale-group representation of $V_{--}\rtimes\langle x\rangle$ of Theorem~\ref{thm:tree_rep} need not be unique. Different choices of subgroup tidy for $x$ may yield different actions on $\treeq$ even if the scale groups themselves are isomorphic. Given a representation, $\rho$, corresponding to the tidy subgroup $V_-$, another is obtained by composing $\rho$ with conjugation by an element, $y$ say, of $V_{--}\rtimes\langle x\rangle$. This conjugate representation corresponds to the subgroup $yV_-y^{-1}$ that is tidy for $yxy^{-1}$ and, if $V_-$ is the stabiliser of $\vtx{v}\in\treeq$, then $yV_-y^{-1}$ is the stabiliser of $y.\vtx{v}$. We consider such conjugate representations to be equivalent. Examples of non-equivalent representations are seen in \S\ref{sec:Pnotunique}. 

A criterion for uniqueness of the scale-group representation up to conjugacy is established in this section. To avoid unnecessary annotations, $V_-$ will be denoted by $V$ and $V_{--}$ by $H$. Then the image of $H\rtimes\langle x\rangle$ under $\rho$ will be the scale group $P = P^{(e)}\rtimes\langle x_0\rangle$ with $P^{(e)} = \rho(H)$ and $x_0=\rho(x)$. 

Several preliminary results are needed.
\begin{lemma}
\label{lem:con_trans}
Let $P = P^{(e)}\rtimes\langle x_0\rangle$ be a scale group acting on $\treeq$ and let $\overline{\con(x_0)}$ denote the closure of the contraction subgroup for $x_0$. Then $\overline{\con(x_0)}$ is transitive on $\partial\treeq\setminus\{\omega\}$. 
\end{lemma}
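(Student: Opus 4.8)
The statement asserts that the closure of the contraction subgroup $\con(x_0) = \{g \in P : x_0^n g x_0^{-n} \to \id \text{ as } n \to \infty\}$ acts transitively on $\partial\treeq\setminus\{\omega\}$. Since $x_0$ translates towards $\omega$ being a repelling end (by the conventions fixed before Proposition~\ref{prop:semidirect}, with $x_0 = \rho(x)$ and $\omega$ repelling for $x$), conjugation by $x_0^n$ pushes the support of an elliptic element deeper and deeper away from $\omega$, so $\con(x_0)$ should consist precisely of the elliptic elements whose fixed-point sets ``point away from $\omega$'' in a strong enough sense. The plan is to identify a concrete supply of elements of $\con(x_0)$ and show they already move one fixed boundary point to every other.

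First I would fix a reference vertex $\vtx{v}_0$ on the axis of $x_0$, with $x_0.\vtx{v}_0 = \vtx{v}_1$ and more generally $x_0^n.\vtx{v}_0 = \vtx{v}_n$ along the axis; let $\omega'$ be the end of the axis other than $\omega$. Set $V = \stab_P(\vtx{v}_0)$. By Proposition~\ref{prop:scale_gp_props}, $x_0 V x_0^{-1} = \stab_P(\vtx{v}_1) < V$, and $V_{--} = \bigcup_n x_0^n V x_0^{-n} = P^{(e)}$, with $V_{-} = \bigcap_{n\ge 0} x_0^n V x_0^{-n} = \stab_P(\omega') = \bigcap_n \stab_P(\vtx{v}_{-n})$ the fixator of the whole axis. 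The key observation is that $\con(x_0) \supseteq \bigcup_{n\in\mathbb Z} x_0^{-n}\, N\, x_0^{n}$ where $N$ is the pointwise fixator in $P$ of the half-tree $\tree_{\vtx{v}_0}^{c}$ (everything on the $\omega$-side of $\vtx{v}_0$), because any $g$ supported inside $\tree_{\vtx{v}_n}$ for large $n$ has $x_0^k g x_0^{-k}$ supported inside $\tree_{\vtx{v}_{n+k}}$, which shrinks to the identity. I would verify that $N$ is open in $V$ (it contains the fixator of a ball around $\vtx{v}_0$, hence is open and nontrivial as $P$ acts with infinite orbits) and that $x_0^{-n} N x_0^n$ fixes $\omega'$ for every $n$, so all these elements lie in $P^{(e)} \cap \con(x_0)$.

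Next I would run the orbit argument. Take any $\omega_1, \omega_2 \in \partial\treeq\setminus\{\omega\}$. Along the geodesic $(\omega, \omega_1)$ and along $(\omega,\omega')$, both rays converge into $\omega$, so for $n$ large the vertices $\vtx{v}_{-n}$ (on the $\omega$-side of $\vtx{v}_0$, on the axis) lie on both $(\omega,\omega_1)$ and $(\omega,\omega_2)$ and on $(\omega, \omega')$; more precisely the confluent point of $(\omega,\omega_1)$ and $(\omega,\omega')$ is some $\vtx{u}_1 = \vtx{v}_{-m_1}$ on the axis, and similarly for $\omega_2$. I want an element of $\con(x_0)$ carrying $\omega_1$ to $\omega_2$. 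Using Proposition~\ref{prop:transitive_on_ends} (or directly Corollary~\ref{cor:transitive_on_ends}), $P^{(e)}$ is transitive on $\partial\treeq\setminus\{\omega\}$; but I need the mover to lie in $\con(x_0)$. The trick is to conjugate down the axis: choose $h \in P^{(e)}$ with $h.\omega_1 = \omega_2$, and observe that $h$ agrees with a fixed element on a large ball; then $x_0^{k} h x_0^{-k}$ for suitable $k$ can be arranged to have support pushed far from $\omega$ while still sending the (now also pushed) images of $\omega_1, \omega_2$ appropriately — but this requires $h$ to fix $\omega'$, which need not hold.

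So the cleaner route, and the one I would actually carry out, is: first replace $h$ by an element fixing $\omega'$. By Corollary~\ref{cor:transitive_on_ends} applied with the end $\omega$ and the pair $(\omega_1,\omega')$ there exists $z\in P$ fixing $\omega$ and $\omega_1$ with $z.\omega' $ arbitrary along its geodesic — more usefully, there is $h_1 \in P^{(e)}$ fixing $\omega'$ with $h_1.\omega_1$ equal to any prescribed end sharing a long initial segment of $(\omega,\omega')$ with $\omega_1$; composing two such, I can move $\omega_1$ to $\omega_2$ through elements each fixing $\omega'$, hence lying in $V_- = \stab_P(\omega')$. Now the real point: an element $g \in V_- = \bigcap_n \stab_P(\vtx{v}_{-n})$ fixes the whole $\omega$-ward axis; if moreover $g$ fixes a neighbourhood of $\omega'$ along that axis — i.e.\ $g$ fixes $\vtx{v}_{-n}$ for all $n$ and acts trivially near $\omega'$ — then conjugating, $x_0^{-k} g x_0^{k}$ fixes $\vtx{v}_{n-k}$ and its support moves toward the $\omega$-side... hmm, that pushes toward $\omega$, the wrong direction. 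I need the reverse: I want support pushed \emph{away} from $\omega$, so I should use $g \in V_-$ and consider $x_0^{k} g x_0^{-k}$, $k\to+\infty$; since $g$ fixes $\vtx{v}_0$ and everything $\omega$-ward, $x_0^k g x_0^{-k}$ fixes $\vtx{v}_k$ and everything between $\vtx{v}_k$ and $\omega$, so its "active region" is confined to $\tree_{\vtx{v}_k}$. As $k\to\infty$ these shrink, so $x_0^k g x_0^{-k} \to \id$: every element of $V_-$ lies in $\con(x_0)$. Therefore $V_- \subseteq \con(x_0) \subseteq \overline{\con(x_0)}$, and since the two-composition argument above realizes $\omega_1 \mapsto \omega_2$ by elements of $V_-$, we get $\overline{\con(x_0)}$ transitive on $\partial\treeq\setminus\{\omega\}$.

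\textbf{The main obstacle.} The crux is the claim $V_- \subseteq \con(x_0)$ — equivalently that every elliptic element fixing the repelling axis is contracted by $x_0$ — and then the ancillary claim that $V_- = \stab_P(\omega')$ already acts transitively on $\partial\treeq \setminus \{\omega\}$. The first is a clean support/conjugation argument once one is careful about which direction $x_0$ translates. The second is where I'd be most careful: it is essentially a relative version of Proposition~\ref{prop:transitive_on_ends} — transitivity of $\stab_P(\omega)$ on ends forces, via the compactness/accumulation trick in that proof, transitivity of $\stab_P(\omega)\cap\stab_P(\omega')$ on the remaining ends, because the moving elements constructed there fix $\vtx{v}$ and hence can be chosen to fix the relevant initial segment of $(\omega,\omega')$. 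I would lift the argument of Proposition~\ref{prop:transitive_on_ends} almost verbatim, tracking that the accumulation point $x$ of the $x_n$ there can be taken inside $V_- = \stab_P(\omega')$ when $\omega_1,\omega_2$ are chosen on the $\omega'$-side, and then bootstrap to all of $\partial\treeq\setminus\{\omega\}$ by composing.
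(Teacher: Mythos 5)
Your argument breaks at its central claim, that the fixator of the axis (your $V_-$, the paper's $V_0=\bigcap_{n\in\mathbb{Z}}x_0^n V x_0^{-n}$) lies in $\con(x_0)$. An element $g$ fixing the axis pointwise fixes the vertices $\vtx{v}_m$ for all $m$, but it may act nontrivially on the subtrees hanging off the axis at $\vtx{v}_m$ for $m$ arbitrarily far towards $\omega$; conjugation by $x_0^k$ merely shifts these hanging subtrees along the axis, so $x_0^k g x_0^{-k}$ can move a vertex at distance $1$ from $\vtx{v}_0$ for every $k$ and need not converge to the identity. (Your phrase ``fixes $\vtx{v}_k$ and everything between $\vtx{v}_k$ and $\omega$'' conflates fixing the ray $[\vtx{v}_k,\omega)$ with fixing the whole half-tree on the $\omega$-side, which is what the argument would actually require.) A concrete counterexample is the scale group of Example~\ref{examp:scale_gp}, the image $P$ of $\mathbb{Q}_p\rtimes\mathbb{Q}_p^\times$ acting on $\tree_{p+1}$ with $\omega=[1:0]$ and $x_0$ the image of $(0,p)$: there $\overline{\con(x_0)}$ is the image of $(\mathbb{Q}_p,+)$, while the fixator of the axis is the image of $\mathbb{Z}_p^\times$, which meets $\overline{\con(x_0)}$ trivially. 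The same example kills your second ingredient: the axis fixator fixes the attracting end $\omega'$, so it cannot possibly be transitive on $\partial\treeq\setminus\{\omega\}$ (and it is not even transitive on $\partial\treeq\setminus\{\omega,\omega'\}$ here, its orbits on $\mathbb{Q}_p\setminus\{0\}$ being the spheres $|y|=p^n$). So no relative version of Proposition~\ref{prop:transitive_on_ends} for $\stab_P(\omega)\cap\stab_P(\omega')$ can hold, and a strategy whose moving elements all fix $\omega'$ is structurally unable to prove the lemma.

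The missing idea is that one should not try to locate $\overline{\con(x_0)}$ inside the axis fixator at all, but rather show that $\con(x_0)$ is large in the transverse direction: by \cite[Proposition~3.16]{BW} one has $V_{--}=\con(x_0)V_0$, so coset representatives for $V/x_0Vx_0^{-1}$ can be chosen in $\con(x_0)\cap V$; by Proposition~\ref{prop:weak_criterion} this makes $\overline{\con(x_0)}\rtimes\langle x_0\rangle$ itself a scale group, and Proposition~\ref{prop:transitive_on_ends} then gives transitivity of its elliptic part $\overline{\con(x_0)}$ on $\partial\treeq\setminus\{\omega\}$. Note that this input is genuinely nontrivial and cannot be replaced by a support argument of the kind you attempt: in the $\mathbb{Q}_p\rtimes\mathbb{Q}_p^\times$ example the only isometry in $P$ supported on a half-tree is the identity, yet $\con(x_0)\cap V$ still surjects onto $V/x_0Vx_0^{-1}$.
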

\begin{proof} By \cite[Proposition~3.16]{BW}, $H = \con(x_0)V_0$ with $V_0 = \bigcap_{n\in\mathbb{Z}} x_0^nVx_0^{-n}$. Hence $V = (V\cap \con(x_0))(V\cap x_0Vx_0^{-1})$ and there are $g_0, \dots, g_{q-1}$ in $V\cap \con(x_0)$ which are coset representatives for $V/(V\cap x_0Vx_0^{-1})$. Then $\rho(\overline{\con(x_0)}\rtimes \langle x\rangle)$ is a scale group, by Proposition~\ref{prop:weak_criterion}, and $\rho(\overline{\con(x_0)})$  is transitive on $\partial\treeq\setminus\{\omega\}$, by Proposition~\ref{prop:transitive_on_ends}. 
\end{proof}

\begin{lemma}
\label{lem:conjugate_end}
Let $P= P^{(e)}\rtimes\langle x_0\rangle$ be a scale group and let $hx_0\in P$. Then there is $g\in\overline{\con_P(x_0)}$ such that $g(hx_0)g^{-1}$ and $x_0$ have the same axis of translation. Hence $h'x_0$ with $h' = gh(x_0g^{-1}x_0^{-1})$ have the same axis of translation.
\end{lemma}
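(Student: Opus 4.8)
The element $hx_0\in P$ is hyperbolic (since $x_0$ is a translation by distance~$1$ away from $\omega$ and $h$ is elliptic, the product still moves horospheres by~$1$), so it has an axis of translation $(\omega,\omega_1)$ for some $\omega_1\in\partial\treeq\setminus\{\omega\}$; this end $\omega_1$ is the attracting end of $hx_0$. The axis of $x_0$ is $(\omega,\omega_0)$, where $\omega_0$ is the end containing the ray $\{\tvtx{v}_n\}_{n\geq 0}$ (equivalently $\omega_0$ is the attracting end of $x_0$). Two hyperbolic elements of $P$ that both repel $\omega$ have the same axis precisely when their attracting ends coincide, so the statement amounts to finding $g\in\overline{\con_P(x_0)}$ with $g.\omega_1=\omega_0$ — and then checking that conjugation by such a $g$ really does move the axis of $hx_0$ onto the axis of $x_0$.

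First I would invoke Lemma~\ref{lem:con_trans}: $\overline{\con_P(x_0)}$ is transitive on $\partial\treeq\setminus\{\omega\}$, so there is $g\in\overline{\con_P(x_0)}$ with $g.\omega_1=\omega_0$. Since every element of $\con_P(x_0)$, and hence of its closure, fixes $\omega$ (the contraction group consists of elliptic elements converging to the identity under positive conjugation by $x_0$, all of which fix the repelling end~$\omega$), we have $g.\omega=\omega$ as well. Then $g$ conjugates $hx_0$ to $g(hx_0)g^{-1}$, which is again hyperbolic with repelling end $g.\omega=\omega$ and attracting end $g.\omega_1=\omega_0$. A hyperbolic isometry of a tree is determined, as far as its axis is concerned, by its pair of fixed ends: the axis is exactly the bi-infinite path $(\omega,\omega_0)$ joining them. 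Hence $g(hx_0)g^{-1}$ and $x_0$ share the axis $(\omega,\omega_0)$, which is what we want.

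The only point requiring a little care is the elementary tree-geometry fact that a hyperbolic automorphism fixing the ordered pair of ends $(\omega,\omega_0)$ has $(\omega,\omega_0)$ as its translation axis, and that this axis is the unique one — but this is standard (see \cite[Proposition~3.2]{Tits} and the surrounding discussion already recalled in \S\ref{sec:scale}), since the set of vertices not moved farther from themselves is the path between the two fixed ends. I would also note explicitly that because both $g(hx_0)g^{-1}$ and $x_0$ translate in the direction of increasing horosphere index (both have $\omega$ repelling), they translate along the common axis in the same direction; this rules out the degenerate possibility that they are inverse translations on the same line. No genuine obstacle arises here: the whole argument is a one-line application of Lemma~\ref{lem:con_trans} followed by the observation that conjugation transports attracting/repelling ends along. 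The statement is really just packaging that transitivity result in the form needed for the uniqueness theorem to follow.

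\begin{proof}
Since $x_0$ translates every horosphere $\gr{V}_n$ onto $\gr{V}_{n+1}$ and $h$ is elliptic, $hx_0$ also maps $\gr{V}_n$ onto $\gr{V}_{n+1}$; hence $hx_0$ is hyperbolic and its axis of translation is a bi-infinite path $(\omega,\omega_1)$ with $\omega_1\in\partial\treeq\setminus\{\omega\}$ the attracting end of $hx_0$, while $\omega$ is its repelling end. Write $(\omega,\omega_0)$ for the axis of $x_0$, so that $\omega_0$ is the attracting end of $x_0$.

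By Lemma~\ref{lem:con_trans}, $\overline{\con_P(x_0)}$ is transitive on $\partial\treeq\setminus\{\omega\}$, so there is $g\in\overline{\con_P(x_0)}$ with $g.\omega_1=\omega_0$. Every element of $\con_P(x_0)$ fixes the repelling end $\omega$ of $x_0$, and this property passes to the closure, so $g.\omega=\omega$. Consequently $g(hx_0)g^{-1}$ is hyperbolic with repelling end $g.\omega=\omega$ and attracting end $g.\omega_1=\omega_0$. A hyperbolic tree automorphism translates along the unique path joining its two fixed ends; thus the axis of $g(hx_0)g^{-1}$ is $(\omega,\omega_0)$, which is precisely the axis of $x_0$. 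Moreover $g(hx_0)g^{-1}$ and $x_0$ both translate this axis in the direction from $\omega$ towards $\omega_0$, since $\omega$ is the repelling end of each. Hence $g(hx_0)g^{-1}$ and $x_0$ have the same axis of translation.
\end{proof}
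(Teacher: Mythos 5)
Your proof is correct and follows essentially the same route as the paper: apply Lemma~\ref{lem:con_trans} to move the attracting end of $hx_0$ onto that of $x_0$, then note that the conjugate fixes the same pair of ends $(\omega,\omega_0)$ and hence shares the axis of $x_0$. The only superfluous bit is your argument that $g$ fixes $\omega$ via properties of the contraction group — this is immediate since $g\in P\leq\Isom(\treeq)_\omega$.
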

\begin{proof}
The action of $hx_0$ in $\treeq$ is hyperbolic with repelling end $\omega$. Let $\omega_0,\,\omega_1$ be the attracting ends of $x_0$ and $hx_0$ respectively. Then there is $g$ in $\overline{\con_P(x_0)}$ such that $g.\omega_1 = \omega_0$ by Lemma~\ref{lem:con_trans}. Hence $g(hx_0)g^{-1}.\omega_0 = \omega_0$ and $x_0$ and $g(hx_0)g^{-1}$ translate the same axis. 
\end{proof}

 \begin{lemma}
 \label{lem:decreasing}
Let $G$ be a group and $y\in G$. Suppose that $V,W\leq G$ satisfy that $yWy^{-1}<W$ and that there are $m,n\in\mathbb{Z}$ such that
\begin{equation}
\label{eq:decreasing}
y^mVy^{-m} \leq W \leq y^nVy^{-n}.
\end{equation}
Then there is $l\in\mathbb{Z}$ such that $yVy^{-1}\leq y^lWy^{-l}$ and $V\not\leq y^lWy^{-l}$. 
 \end{lemma}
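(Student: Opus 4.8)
The statement is purely group-theoretic, about how the conjugates $y^lWy^{-l}$ of a decreasing subgroup $W$ interleave with the conjugates $y^kVy^{-k}$ of another subgroup $V$. I would first record the monotonicity facts: since $yWy^{-1}<W$, conjugating repeatedly gives a strictly decreasing chain $\dots > y^{-1}Wy > W > yWy^{-1} > y^2Wy^{-2} > \dots$; equivalently $k\mapsto y^kWy^{-k}$ is strictly order-reversing in $k$ (with respect to inclusion). So $y^lWy^{-l}\leq W$ iff $l\geq 0$, and the chain has no repeats. Conjugating \eqref{eq:decreasing} by $y^{-k}$ shows that for every $k\in\mathbb{Z}$ we have $y^{m+k}Vy^{-(m+k)}\leq y^kWy^{-k}\leq y^{n+k}Vy^{-(n+k)}$; in particular $W$ is sandwiched between two members of the $V$-chain, and so is every conjugate of $W$.

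**Key steps.** The goal is to find $l$ with $yVy^{-1}\leq y^lWy^{-l}$ but $V\not\leq y^lWy^{-l}$. The idea is to take the \emph{smallest} $l$ for which $yVy^{-1}\leq y^lWy^{-l}$ holds, and then argue that for this minimal $l$ the second containment must fail. First I would check the set $L:=\{l\in\mathbb{Z}: yVy^{-1}\leq y^lWy^{-l}\}$ is nonempty and bounded below. Nonempty: from \eqref{eq:decreasing} conjugated suitably, $yVy^{-1} = y^{1-m}(y^mVy^{-m})y^{-(1-m)} \leq y^{1-m}Wy^{-(1-m)}$, so $l=1-m\in L$. Bounded below (so a minimum exists): if $l\in L$ then $yVy^{-1}\leq y^lWy^{-l}\leq y^{l+n}Vy^{-(l+n)}$, i.e. $V\leq y^{l+n-1}Vy^{-(l+n-1)}$; I'd need this to force $l$ bounded below. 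Here I'd use that $W$ being a \emph{proper} subgroup of $yWy^{-1}$ means the $W$-chain is infinite and strictly decreasing, hence the sandwiching in \eqref{eq:decreasing} forces the $V$-chain to also be non-stabilising downward, which rules out $V\leq y^{-N}Vy^{N}$ for all large $N$ — otherwise $W$ would be trapped between equal subgroups. (This is the place where the hypothesis $yWy^{-1}<W$ being \emph{strict} is essential; without it the claim is false, e.g. $W=V$.) Let $l=\min L$. Then $yVy^{-1}\leq y^lWy^{-l}$ by definition. For the strictness: suppose toward a contradiction that also $V\leq y^lWy^{-l}$. Conjugating by $y^{-1}$ gives $y^{-1}Vy \leq y^{l-1}Wy^{-(l-1)}$, hence $yVy^{-1}\leq y \cdot y^{-1}Vy \cdot y^{-1} \le \dots$ — actually the cleaner route: $V\leq y^lWy^{-l}$ conjugated by $y$ gives $yVy^{-1}\leq y^{l+1}Wy^{-(l+1)}$; but $y^{l+1}Wy^{-(l+1)} < y^lWy^{-l}$, so this is consistent. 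I'd instead show $l-1\in L$: from $V\leq y^lWy^{-l}$ we get $yVy^{-1}\leq y\cdot V\cdot y^{-1}$... no. The right move is: $V\leq y^lWy^{-l}$ implies, conjugating by $y$, that $yVy^{-1}\leq y^{l+1}Wy^{-(l+1)}$, and conjugating by $y^{-1}$, that $y^{-1}Vy\leq y^{l-1}Wy^{-(l-1)}$. Combined with $yVy^{-1}\leq y^lWy^{-l}$: since the $W$-chain decreases, $yVy^{-1}\leq y^lWy^{-l}$ is the relevant one, and I want to beat $l$. From $y^{-1}Vy\leq y^{l-1}Wy^{-(l-1)}$ I get $V\leq y^lWy^{-l}$ (reconjugate) — circular. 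So I would instead exploit that $yVy^{-1}\leq V$ \emph{fails} in general, and use the chain of $V$'s: I'll show if both containments hold then $yVy^{-1}\le y^{l-1}Wy^{-(l-1)}$ too, contradicting minimality. Indeed $V\leq y^lWy^{-l}$ gives (conjugate by $y^{-1}$) $y^{-1}Vy\leq y^{l-1}Wy^{-(l-1)}$. And $yVy^{-1} = y^2(y^{-1}Vy)y^{-2}\le y^2\cdot y^{l-1}Wy^{-(l-1)}\cdot y^{-2}=y^{l+1}Wy^{-(l+1)}\le y^{l-1}Wy^{-(l-1)}$? That inclusion goes the wrong way since $l+1>l-1$. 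Hmm.

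**The main obstacle.** The real subtlety, and where I'd spend the effort, is getting the strict failure $V\not\leq y^lWy^{-l}$ right: the clean statement is that among all $l$ with $yVy^{-1}\leq y^lWy^{-l}$, the minimal one cannot also satisfy $V\leq y^lWy^{-l}$, because if it did, one checks directly that $yVy^{-1}\leq y^{l'}Wy^{-{l'}}$ for some smaller $l'$ (concretely, $V\leq y^lWy^{-l}$ and $yVy^{-1}\leq V$ would give it with $l'=l$; but more carefully one uses $W<y^{-1}Wy$ to conjugate $V\le y^lWy^{-l}$ down to $V\le y^{l-1}(y^{-1}Wy\cap\text{stuff})$...). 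I expect the correct argument is: take $l$ minimal with $V\leq y^{l}Wy^{-l}$ \emph{false} but $yVy^{-1}\le y^lWy^{-l}$, shown to exist by the boundedness analysis; or dually, let $l$ be the largest integer with $V\le y^lWy^{-l}$ (this set is bounded above since the $W$-chain strictly decreases and $V$ is sandwiched) and take $l+1$: then $V\not\le y^{l+1}Wy^{-(l+1)}$ by maximality, while $yVy^{-1}\le y^{l+1}(V)y^{-(l+1)}\cdot$ — no, I need $yVy^{-1}\le y^{l+1}Wy^{-(l+1)}$, which follows from $V\le y^lWy^{-l}$ by conjugating by $y$. So: \textbf{let $l$ be maximal with $V\leq y^lWy^{-l}$}; such $l$ exists (nonempty by \eqref{eq:decreasing} with suitable conjugation giving $V\le y^{-n}Wy^n$; bounded above because $W<yWy^{-1}$... wait direction — $yWy^{-1}<W$ so $y^{-n}Wy^{n}$ decreases as $n$ decreases, i.e. increases as... let me just say: the chain $\{y^kWy^{-k}\}$ is infinite strictly decreasing, $V$ sits between two of its members by \eqref{eq:decreasing}, and a fixed subgroup $V$ cannot be contained in arbitrarily small members of an infinite strictly decreasing chain unless... here is the crux: it CAN'T contain $V$ for arbitrarily negative conjugation index forever if that would sandwich a strictly-shrinking $W$ between fixed bounds). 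Then set this maximal $l$, replace it by $l$, and the desired conclusion is: $yVy^{-1}\le y^{l+1}Wy^{-(l+1)}$ (conjugate $V\le y^lWy^{-l}$ by $y$) and $V\not\le y^{l+1}Wy^{-(l+1)}$ (maximality) — so the required index in the statement is $l+1$. I would write up this last version, as it is cleanest, and the one genuine point needing care is verifying the maximum exists, which is exactly where $yWy^{-1}<W$ strict and $W\leq y^nVy^{-n}$ get used together.

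\begin{proof}
By hypothesis $yWy^{-1}<W$, so the chain of conjugates $\{y^kWy^{-k}\}_{k\in\mathbb{Z}}$ is strictly decreasing: $y^kWy^{-k}\supsetneq y^{k+1}Wy^{-(k+1)}$ for all $k$, and in particular $y^jWy^{-j}\leq y^kWy^{-k}$ if and only if $j\geq k$, with all these subgroups distinct.

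Consider the set $L=\{l\in\mathbb{Z}\mid V\leq y^lWy^{-l}\}$. Conjugating the right-hand inclusion of \eqref{eq:decreasing} by $y^{-n}$ gives $y^{-n}Wy^{n}\geq V$, so $-n\in L$ and $L\neq\varnothing$. We claim $L$ is bounded above. If $l\in L$, then conjugating the left-hand inclusion of \eqref{eq:decreasing} by $y^{l-m}$ yields $y^lVy^{-l}\leq y^{l-m+m}Wy^{-(l-m+m)}=y^lWy^{-l}$; hence $y^{-l}(y^lVy^{-l})y^l = V$ and also $V\leq y^lWy^{-l}\leq y^{l-m}Vy^{-(l-m)}$, the last step being \eqref{eq:decreasing} conjugated by $y^{l-m}$ and using $W\le y^nVy^{-n}$ appropriately. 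Thus for every $l\in L$ the subgroup $y^lWy^{-l}$ is sandwiched as $V\leq y^lWy^{-l}\leq y^{l+n-m}Vy^{-(l+n-m)}$. If $L$ were unbounded above, pick $l_1<l_2$ in $L$ with $l_2-l_1$ larger than the length $n-m$ of this sandwich; then $y^{l_1}Wy^{-l_1}$ and $y^{l_2}Wy^{-l_2}$ are both trapped between $V$ and conjugates of $V$ lying within the finitely many terms $V,\,yVy^{-1},\,\dots,\,y^{n-m}Vy^{-(n-m)}$ (after a common conjugation), forcing $y^{l_1}Wy^{-l_1}$ and $y^{l_2}Wy^{-l_2}$ into the \emph{same} sandwich interval; but conjugating, $y^{l_2-l_1}Wy^{-(l_2-l_1)}$ and $W$ would then be caught between a fixed pair of subgroups, contradicting that $\{y^kWy^{-k}\}$ is strictly decreasing with no two terms equal once $l_2-l_1\ge 1$ already separates them. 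Hence $L$ is bounded above and $l:=\max L$ exists.

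Set the index in the statement to be $l+1$. Since $l\in L$, we have $V\leq y^lWy^{-l}$; conjugating this inclusion by $y$ gives $yVy^{-1}\leq y^{l+1}Wy^{-(l+1)}$, which is the first required containment. For the second, $l+1\notin L$ by maximality of $l$, which is precisely the statement that $V\not\leq y^{l+1}Wy^{-(l+1)}$. This proves the lemma.
\end{proof}
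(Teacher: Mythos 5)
Your overall strategy --- take the largest $l$ with $V\leq y^lWy^{-l}$ and pass to $l+1$ --- is essentially the paper's proof (the paper takes the largest $l$ with $yVy^{-1}\leq y^lWy^{-l}$, which is your $l+1$), and your final paragraph is correct once the maximum is known to exist. The genuine gap is that both supporting steps for the existence of $\max L$, with $L=\{l\in\mathbb{Z}\mid V\leq y^lWy^{-l}\}$, fail as written. For nonemptiness you conjugate the right-hand inclusion $W\leq y^nVy^{-n}$ by $y^{-n}$ and read off $V\leq y^{-n}Wy^{n}$; in fact this gives the \emph{reverse} inclusion $y^{-n}Wy^{n}\leq V$. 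The correct witness comes from the other half of \eqref{eq:decreasing}: conjugating $y^mVy^{-m}\leq W$ by $y^{-m}$ gives $V\leq y^{-m}Wy^{m}$, so $-m\in L$. For boundedness above, your conjugation computations are wrong (conjugating $y^mVy^{-m}\leq W$ by $y^{l-m}$ yields $y^lVy^{-l}\leq y^{l-m}Wy^{-(l-m)}$, not $y^lVy^{-l}\leq y^lWy^{-l}$, and conjugating $W\leq y^nVy^{-n}$ by $y^l$ yields $y^lWy^{-l}\leq y^{l+n}Vy^{-(l+n)}$), and the concluding ``sandwich'' step is not a valid deduction: that two distinct conjugates $y^{l_1}Wy^{-l_1}$ and $y^{l_2}Wy^{-l_2}$ are caught between a fixed pair of subgroups contradicts nothing --- in an abstract group infinitely many distinct members of a strictly decreasing chain can lie between two fixed subgroups, and no finiteness or index hypothesis is available (or needed) in this lemma.

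The repair is short and is exactly where both hypotheses enter, as in the paper. Since $yWy^{-1}<W$, the chain $k\mapsto y^kWy^{-k}$ is strictly decreasing, so $y^jWy^{-j}\leq y^kWy^{-k}$ if and only if $j\geq k$. If $l\in L$, then combining $V\leq y^lWy^{-l}$ with the correctly conjugated right-hand inclusion $y^{-n}Wy^{n}\leq V$ gives $y^{-n}Wy^{n}\leq y^lWy^{-l}$, hence $l\leq -n$. Thus $L$ contains $-m$ and is bounded above by $-n$, so $\max L$ exists, and your last paragraph then completes the argument just as the paper's proof does.
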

 \begin{proof}
A rearrangement~\eqref{eq:decreasing} yields $y^{1-n}Wy^{n-1} \leq yVy^{-1}\leq y^{1-m}Wy^{m-1}$. Since $yWy^{-1}<W$, the first inclusion implies that $y^{k}Wy^{-k} < yVy^{-1}$ for every $k> 1-n$ and hence that there is a largest integer, $l$, such that $yVy^{-1}\leq y^lWy^{-l}$. Then, since~$l$ is the largest, $V\not\leq y^lWy^{-l}$. 
 \end{proof}

\begin{proposition}
\label{prop:sc.gp.rep._uniqueness} 
The scale group representation $\rho : H\rtimes \langle x\rangle\to \Isom(\treeq)$ is unique up to conjugacy if and only if the action $V\curvearrowright V/xVx^{-1}$ is primitive. 
\end{proposition}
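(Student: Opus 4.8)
The plan is to translate the statement into the language of compact open subgroups of $P$ and then treat the two implications separately. The underlying dictionary is that a scale-group representation of $P$ on a $(q+1)$-regular tree is determined, up to conjugacy, by the stabiliser of a vertex on the axis of the translation element: if $P$ acts on $\treeq$ as a scale group with $\omega$ repelling and $x_0.\vtx{v}_0$ a child of $\vtx{v}_0$, then (by the Orbit--Stabiliser computation of Proposition~\ref{prop:scale_gp_props}) $V:=\stab_P(\vtx{v}_0)$ is compact open, $x_0Vx_0^{-1}<V$, $[V:x_0Vx_0^{-1}]=q$, $\bigcup_n x_0^{-n}Vx_0^n=P^{(e)}$, and the action $V\curvearrowright V/x_0Vx_0^{-1}$ of the statement is exactly the action of $\stab_P(\vtx{v}_0)$ on the $q$ children of $\vtx{v}_0$. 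Conversely, any compact open $W\le P$ with $x_0Wx_0^{-1}<W$, $[W:x_0Wx_0^{-1}]=q$ and $\bigcup_n x_0^{-n}Wx_0^n=P^{(e)}$ is the vertex stabiliser of a scale-group representation of $P$, on $\bigsqcup_n P/x_0^nWx_0^{-n}$; this representation is faithful because $P$, being vertex-transitive on a tree, has no non-trivial compact normal subgroup, and two such subgroups yield equivalent representations when they are conjugate in $P$ (and, conversely, equivalent representations have $P$-conjugate vertex stabilisers). Finally, primitivity of $V\curvearrowright V/x_0Vx_0^{-1}$ is equivalent to $x_0Vx_0^{-1}$ being a maximal subgroup of $V$, hence, conjugating along the axis, to $x_0^{n+1}Vx_0^{-n-1}$ being maximal in $x_0^nVx_0^{-n}$ for every $n\in\Z$.

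For the ``only if'' direction I would argue the contrapositive. If $x_0Vx_0^{-1}$ is not maximal in $V$, choose $U$ with $x_0Vx_0^{-1}<U<V$. Multiplicativity of indices then gives $x_0Ux_0^{-1}<U$, $[U:x_0Ux_0^{-1}]=q$, $\bigcup_n x_0^{-n}Ux_0^n=P^{(e)}$ and $\bigcap_n x_0^nUx_0^{-n}=\bigcap_n x_0^nVx_0^{-n}$, so $U$ is the vertex stabiliser of a (faithful) scale-group representation of $P$ on a $(q+1)$-regular tree. This representation is not equivalent to the given one: were $pUp^{-1}=V$ for some $p\in P$, then $U=p^{-1}Vp=\stab_P(\vtx{a})$ for a vertex $\vtx{a}$ with $\stab_P(\vtx{a})\subsetneq\stab_P(\vtx{v}_0)$, and vertex-transitivity of $P$ on $\treeq$ forces the index $[V:U]=[\stab_P(\vtx{v}_0):\stab_P(\vtx{a})]$ to be a positive power of $q$, contradicting $1<[V:U]<q$.

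For the ``if'' direction, suppose the action is primitive and let a second scale-group representation of $P$ be given, with vertex stabiliser $W$. Using Lemma~\ref{lem:conjugate_end} (which rests on Lemma~\ref{lem:con_trans}) I would first replace this representation by an equivalent one whose translation element is $x_0$, so that $W$ satisfies $x_0Wx_0^{-1}<W$, $[W:x_0Wx_0^{-1}]=q$ and $\bigcup_n x_0^{-n}Wx_0^n=P^{(e)}$. Then Lemma~\ref{lem:decreasing}, applied to $P$, the element $x_0$ and the subgroups $V$ and $W$ — its nesting hypothesis $x_0^mVx_0^{-m}\le W\le x_0^nVx_0^{-n}$ holding because $V$ and $W$ are compact open and $P^{(e)}=\bigcup_n x_0^{-n}Vx_0^n=\bigcup_n x_0^{-n}Wx_0^n$ — produces $\ell\in\Z$ with $x_0Vx_0^{-1}\le x_0^\ell Wx_0^{-\ell}$ and $V\not\le x_0^\ell Wx_0^{-\ell}$. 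Replacing $W$ by the equivalent subgroup $x_0^\ell Wx_0^{-\ell}$, maximality of $x_0Vx_0^{-1}$ in $V$ forces $W\cap V=x_0Vx_0^{-1}$. It remains to upgrade this to $W=x_0Vx_0^{-1}$; once that is done, $W=\stab_P(x_0.\vtx{v}_0)$ is a vertex stabiliser of the original action, the two representations are equivalent, and uniqueness follows.

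This last step is the crux, and I expect it to be the main obstacle. Conjugating the inclusions above by powers of $x_0$ gives $x_0^{n+1}Vx_0^{-n-1}\le x_0^nWx_0^{-n}$ and $x_0^nVx_0^{-n}\not\le x_0^nWx_0^{-n}$ for all $n$, so primitivity yields $x_0^nWx_0^{-n}\cap x_0^nVx_0^{-n}=x_0^{n+1}Vx_0^{-n-1}$ for every $n$. I would then combine this family of identities with the facts that the chain $\{x_0^nWx_0^{-n}\}$ has index-$q$ steps and union $P^{(e)}$ and that $W$, being compact, fixes a vertex of $\treeq$, hence a sub-ray of $[\vtx{v}_1,\omega)$ where $\vtx{v}_1=x_0.\vtx{v}_0$. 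Concretely, assuming $c:=[W:x_0Vx_0^{-1}]>1$, the identities propagate to $[x_0^nWx_0^{-n}:x_0^{n+1}Vx_0^{-n-1}]=c$ for every $n$, and one then has to reconcile the two index-$q$ filtrations of $P^{(e)}$, one offset from the other by the constant factor $c$, by counting cosets inside a common member far enough out in both filtrations; this should force $c=1$ and hence $W=x_0Vx_0^{-1}$.
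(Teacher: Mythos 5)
Your ``only if'' half is essentially the paper's own argument (an intermediate subgroup $U$ with $xVx^{-1}<U<V$ is tidy, yields a second representation, and $1<[V:U]<q$ rules out conjugacy to $V$), and your dictionary between representations and vertex stabilisers is the one the paper uses. The problems are in the ``if'' half. First, Lemma~\ref{lem:conjugate_end} does not allow you to ``replace the representation by an equivalent one whose translation element is $x_0$'': it only produces $g$ with $g(hx_0)g^{-1}$ and $x_0$ sharing an axis, so after conjugating $W$ you know $(h'x_0)W'(h'x_0)^{-1}<W'$ for some elliptic $h'$ fixing that axis pointwise, not $x_0W'x_0^{-1}<W'$; $h'$ normalises every $x_0^nVx_0^{-n}$ (it fixes the vertices on the axis), but nothing makes it normalise $W'$. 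The paper sidesteps this by keeping the element $hx$ and transferring the alignment to $V$ instead: since $h$ fixes the axis, $hVh^{-1}=V$, hence $(hx)V(hx)^{-1}=xVx^{-1}$, and Lemma~\ref{lem:decreasing} is applied with $y=hx$. This slip is repairable by the same device.

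The genuine gap is the step you yourself call the crux: upgrading $W\cap V=x_0Vx_0^{-1}$ to $W=x_0Vx_0^{-1}$ (equivalently, to $W$ being a conjugate of $V$). You leave it at ``this should force $c=1$'', and the proposed bookkeeping with the two index-$q$ filtrations cannot do it on its own: in the imprimitive case exactly this configuration occurs with $1<c<q$ (take $W$ strictly between $x_0Vx_0^{-1}$ and $V$), so two index-$q$ filtrations of $P^{(e)}$ offset by a constant factor $c$ are perfectly consistent, and any argument closing the gap must use primitivity at every level, together with structural information about $W$ (for instance that, being compact, it lies in a vertex stabiliser), not just indices. Note how the paper's proof is organised so as not to need this identification at all: it argues the contrapositive, assuming the second representation is \emph{not} conjugate to $\rho$, and invokes that hypothesis precisely at this point to make the lower inclusion strict, obtaining $xVx^{-1}<(hx)^lW(hx)^{-l}\cap V<V$ and hence imprimitivity. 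Your direct formulation of the ``if'' direction obliges you to prove the statement the paper's non-conjugacy hypothesis stands in for, and that proof is missing.
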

\begin{proof}
We will show the contrapositive, that $H\rtimes \langle x\rangle$ has scale group representations that are not conjugate if and only if the action $V\curvearrowright V/xVx^{-1}$ is not primitive. Since every scale-group representation has kernel equal to the largest compact normal subgroup of $H\rtimes \langle x\rangle$, which is contained in $H$, we may divide out by this subgroup and assume that all such representations are faithful. 

Suppose that $V\curvearrowright V/xVx^{-1}$ is not primitive. Let $W$ be the stabiliser of the block of imprimitivity containing the coset $xVx^{-1}$. Then $W$ is compact and open in $H\rtimes \langle x\rangle$ and is tidy for $x$ because $xWx^{-1} < xVx^{-1} < W < V$. The inclusion $xVx^{-1} <W$ implies that $\bigcup_{n\in\mathbb{Z}} x^nWx^{-n} = H$ and so, by Theorem~\ref{thm:tree_rep}, there is a scale group representation $\rho_W : H\rtimes \langle x\rangle \to \Isom(\treeq)$ with $\rho_W(W)$ the stabiliser of a vertex in $\treeq$. The index $[V : W]$ is strictly less than $q$, whereas conjugates of $V$ contained in $V$ have index a power of~$q$. Therefore $W$ is not a conjugate of $V$ and $\rho_W$ is not conjugate to~$\rho$. 

Suppose, to the contrary, that there is $\rho_1$, a scale-group representation of $H\rtimes\langle x\rangle$, not conjugate to $\rho$. Then there is a compact open subgroup $W<H\rtimes\langle x\rangle$ such that $\rho_1(W)$ is the stabiliser of a vertex in $\treeq$ and, by Proposition~\ref{prop:scale_gp_props} and since $\rho_1$ is faithful, there is $hx\in H\rtimes\langle x\rangle$ such that $(hx)W(hx)^{-1}<W$. Conjugating by an element of $\overline{\con(x)}$, it may be assumed, by Lemma~\ref{lem:conjugate_end}, that $\rho(hx)$ and $\rho(x) = x_0$ have the same axis. Then $\rho(h)$ fixes all vertices on the axis of $x_0$, which implies that $\rho(h)\rho(xVx^{-1})\rho(h)^{-1} = \rho(xVx^{-1})$ and, since $\rho$ is faithful, that $(hx)V(hx)^{-1}=xVx^{-1}$. Hence $(hx)V(hx)^{-1}<V$ and we have, by Proposition~\ref{prop:scale_gp_props} again, that $\bigcup_{n\in\mathbb{Z}} (hx)^nV(hx)^{-n} = H = \bigcup_{n\in\mathbb{Z}} (hx)^nW(hx)^{-n}$. Then, since $V$ and $W$ are compact; there are $m,n\in\mathbb{Z}$ such that
$$
V\leq (hx)^m W(hx)^{-m}\mbox{ and }W\leq (hx)^nV(hx)^{-n}.
$$
By Lemma~\ref{lem:decreasing}, there is $l\in\mathbb{Z}$ such that $(hx)V(hx)^{-1}\leq (hx)^lW(hx)^{-l}$ and $V\not\leq (hx)^lW(hx)^{-l}$. However, it has already been seen that $(hx)V(hx)^{-1} = xVx^{-1}$ and so, since $W$ is not a conjugate of $V$, we have the strict inclusions
$$
xVx^{-1} < (hx)^lW(hx)^{-l}\cap V < V.
$$
Therefore the action $V\curvearrowright V/xVx^{-1}$ is not primitive. 
\end{proof}

Proposition~\ref{prop:sc.gp.rep._uniqueness} opens the way for the classification of finite primitive permutation groups given by the O'Nan-Scott Theorem, see~\cite{LiPrSa}, to have a role in the investigation of scale groups.  

\subsection{Examples of tree representation}
\label{sec:Pnotunique}

This subsection illustrates the tree representation theorem by calculating it for several groups and tidy subgroups. It is seen that the same group may have many non-equivalent scale group representations. 

\subsubsection{A general construction}
\label{sec:product_construction}
Here is a general construction on which the examples to follow are based. 

Let $F$ be a finite group and $A$ be a proper subgroup of $F$. Define 
$$
H = \left\{ h\in F^{\mathbb{Z}} \mid \exists n\in\mathbb{Z} \mbox{ such that }h_m \in A \mbox{ for all }m\leq n\right\}. 
$$
Equip $H$ with the coordinatewise product and the topology such that the profinite subgroups
$$
V_n := \left\{ h\in H \mid h_m \in A \mbox{ for all }m\leq n\right\} \cong A^{(-\infty,n]} \times F^{(n,\infty)} 
$$
are open. Define the shift automorphism $\alpha\in\Aut(H)$ by 
$$
\alpha(h)_n = h_{n-1},\quad (n\in\mathbb{Z}).
$$
Then $G(F,A) = H\rtimes \langle \alpha\rangle$ is a t.d.l.c.~group. In the case when $A$ is trivial, $G(F,A)$ will be abbreviated as $G(F)$.

Theorem~\ref{thm:tree_rep} will now be interpreted with $G=G(F,A)$, $x=\alpha$ and $V = V_0$. Then $V$ is tidy for~$\alpha$, because $\alpha^n(V) = V_n$ for all $n\in\mathbb{Z}$ and $V_+ = A^{\mathbb{Z}}$, $V_- = V$ and $V_{--} = H$, and $q = s(\alpha^{-1}) = [V_{-1} : V_0] = |F/A|$.  

The proof of the theorem identifies $\treeq$ with $\tree^{(\alpha)}$ having the vertex set
$$
\gr{V}\tree^{(\alpha)} = \gr{W} = \bigsqcup_{n\in\mathbb{Z}} H/V_n.
$$ 
Let $f_i$, $i\in\{0,\dots,q-1\}$, be coset representatives for $F/A$ with $f_0=\id_F$ and, abusing notation, denote $h\in H$ with $h_0=f_i$ and $h_m=\id_F$ for $m\ne 0$ by $f_i$ as well. Then, for $g = (g_m)\in H$ with $g_m\in A$ for $m<N$, we have 
$$
gV_{n+1} = \alpha^N(f_{i_N}) \dots \alpha^n(f_{i_n})V_{n+1},
$$ 
which may be compared with Equation~\eqref{eq:productforg}. Since the product in $H$ is coordinate-wise, the action of $H$ on $\gr{V}\tree^{(\alpha)}$ is
\begin{align*}
h.(gV_{n+1}) &= \alpha^M(h_{i_N}f_{i_N}) \dots \alpha^n(h_{i_n}f_{i_n})V_{n+1}\\
&= \alpha^M(f_{i'_N}) \dots \alpha^n(f_{i'_n})V_{n+1}
\end{align*}
with $M$ such that $h_m,g_m\in A$ for all $m<M$, and $f_{i'_m}$ the coset representative such that $h_mf_{i_m}A = f_{i'_m}A$. Hence the kernel of $\rho : G(F,A)\to \Isom(\tree^{(\alpha)})$ is $C^{\mathbb{Z}}$ where $C = \bigcap_{f\in F} fAf^{-1}$. 

The isomorphism $\phi : \tree^{(\alpha)}\to \treeq$ sends $\alpha^N(f_{i_N}) \dots \alpha^n(f_{i_n})V_{n+1}\in \gr{V}\tree^{(\alpha)}$ to the sequence $(i_m)\in \{0,\dots,q-1\}^{(-\infty,n]}\in \gr{V}\treeq$. Then the scale group acting on $\treeq$ induced by the action of $G(F,A)$ on $\tree^{(\alpha)}$ is that described in Example~\ref{examp:scale_group} with $S$ being the permutation subgroup of $\Sym(F/A)$ induced by the action $F/C\curvearrowright (F/C)/(A/C)$. 

\subsubsection{The examples}

Particular choices of $F$ and $A$ in \S\ref{sec:product_construction} illustrate the dependence of the residue group for a scale group on the choice of tidy subgroup $V$.
\begin{example}
\label{examp:Pnotunique}
Let $F=\Sym(3)$ and $A$ be the trivial subgroup. Denote the group $G(F,A)$ in \S\ref{sec:product_construction} by $G(\Sym(3))$. Then $q=|F|=6$ and the largest compact, normal $\alpha$-invariant subgroup of $G(\Sym(3))$ is trivial. The action $F\curvearrowright F/A$ is the Cayley representation of $\Sym(3)$ as a subgroup of $\Sym(6)$. To avoid confusion, this subgroup of $\Sym(6)$ will be denoted by $S_3$. 

\noindent{$\bullet$} Applying Theorem~\ref{thm:tree_rep} with $V = V_0$ represents $G(\Sym(3))\curvearrowright \tree_7$ as the scale group $P^{(S_3)}$ of Example~\ref{examp:scale_group}. The residue permutation group is $S_3$, and the level~$d$ residue group is $S_3^d$ acting on the finite tree $\tree_{6,6}^{(d)}$. 

\noindent{$\bullet$} For a different representation of $G(\Sym(3))$, let $\Alt(3)$ be the group of even permutations in $\Sym(3)$ and (denoting the identity in $\Sym(3)$ by $\iota$) apply Theorem~\ref{thm:tree_rep} with
$$
V = \left\{h\in H\mid h_m = \iota \mbox{ if }m<-1 \mbox{ and } h_{-1} \in \Alt(3)\right\}.
$$
Then $V$ is tidy for $\alpha$ because $\alpha(V)< V$. By Proposition~\ref{prop:dresidisquotient}, the residue group is $V\curvearrowright V/\alpha(V)$, which is isomorphic to $A_3\times (S_3/A_3)$ and is abelian.

To describe the residue group for this representation more explicitly, put 
$$
\Sym(3) = \langle \sigma, \tau \mid \sigma^2 = \iota = \tau^3,\ \sigma\tau\sigma = \tau^{-1}\rangle
$$
and, for $f\in\Sym(3)$, define $f_{[m]}\in H$ by $f_{[m]}(m) = f$ and $f_{[m]}(n) = \iota$ if $n\ne m$. Define maps 
$$
a,b : \Sym(3) \to H\mbox{ by }a(\sigma^i\tau^j) = \sigma^i_{[0]}\mbox{ and }b(\sigma^i\tau^j) = \tau^j_{[-1]}.
$$ 
Then $\left\{b(f)a(f)\mid f\in\Sym(3)\right\}$ is a set of coset representatives for $V/\alpha(V)$ which will enumerated $g_i$, $i\in\{0,\dots,5\}$ as in Table~\ref{tab:enum_cos_reps}.
\begin{table}[b]
	\begin{center}
		\begin{tabular}{|c||c|c|c|c|c|c|}
			\hline
			$i$&0&1&2&3&4&5\\
			\hline
			$g_i$&$\iota$ & $\tau_{[-1]}$ & $\tau^2_{[-1]}$ & $\sigma_{[0]}$ &$\tau_{[-1]}\sigma_{[0]}$ & $\tau^2_{[-1]}\sigma_{[0]}$\\
			\hline 
		\end{tabular}
	\end{center}
	\caption{Enumeration of coset representatives}
	\label{tab:enum_cos_reps}
\end{table}
Then, for $h\in H$, 
\begin{equation}
\label{eq:gproduct}
h = \prod_{m\in\mathbb{Z}} (h_m)_{[m]} = \prod_{m\in\mathbb{Z}} \alpha^m(a(h_m))\alpha^{m+1}(b(h_m)) = \prod_{m\in\mathbb{Z}} \alpha^m(g_{i_m}),
\end{equation}
which may be compared with~\eqref{eq:productforg}. 
 
The tree $\tree_7$ may now be constructed as in \S\ref{sec:tree_construction} and the action of $H$ on it computed in terms of local permutations, as in Proposition~\ref{prop:describe_autos}. For this, it suffices to compute the action of $f_{[l]}$ for $l\in\mathbb{Z}$. Doing so, we find that: for $f=\sigma^j\tau^k$ and $\vtx{v}$ in the $l$-horosphere, $\pi_{\vtx{v}}$ is the permutation 
$$
b(h_{l-1})a(h_l)\mapsto b(h_{l-1})a(fh_l)\ \mbox{ or }\ \pi_{\vtx{v}} = [
(\begin{smallmatrix} 0 & 3\end{smallmatrix})(\begin{smallmatrix} 1 & 4\end{smallmatrix})(\begin{smallmatrix} 2 & 5\end{smallmatrix})]^j;
$$
for $\vtx{v}$ in the $(l+1)$-horosphere, $\pi_{\vtx{v}}$ is the permutation 
$$
b(h_l)a(h_{l+1})\mapsto b(fh_l)a(h_{l+1})\ \mbox{ or }\ \pi_{\vtx{v}} = \begin{cases}
[(\begin{smallmatrix} 0 & 1 & 2 \end{smallmatrix})(\begin{smallmatrix} 3 & 4 & 5 \end{smallmatrix})]^k, & \mbox{ if } h_l\in  A_3 \\
[(\begin{smallmatrix} 0 & 1 & 2 \end{smallmatrix})(\begin{smallmatrix} 3 & 4 & 5 \end{smallmatrix})]^{-k}, & \mbox{ if } h_l\in \sigma A_3
\end{cases};
$$ 
and for $\vtx{v}$ in all other horospheres, $\pi_{\vtx{v}}$ if the identity map.
 
Denote the image of $G(\Sym(3))$ under its representation on $\tree_7$ by $P$. The above calculations then show that the residue group is 
$$
[P] = \langle (\begin{smallmatrix} 0 & 3\end{smallmatrix})(\begin{smallmatrix} 1 & 4\end{smallmatrix})(\begin{smallmatrix} 2 & 5\end{smallmatrix}), (\begin{smallmatrix} 0 & 1 & 2 \end{smallmatrix})(\begin{smallmatrix} 3 & 4 & 5 \end{smallmatrix})\rangle.
$$ 
In particular, $[P]$ is abelian and not isomorphic to $\Sym(3)$. Therefore $P^{(S_3)}$ and $P$ are not conjugate in $\Isom(\tree_7)_\omega$.

\noindent{$\bullet$} Other choices of tidy subgroup, one for each $r\in\mathbb{N}$, are
$$
V^{(r)} = \left\{ h\in H \mid h_m = \iota \mbox{ if }m<-r \mbox{ and } h_m\in \Alt(3)\mbox{ if }-r\leq m <0\right\},
$$
which is tidy for $\alpha$ because $\alpha(V^{(r)})<  V^{(r)}$. 

Denote the image of $G(\Sym(3))$ under its representation on $\tree_7$ by $P_r$. Then the stabiliser $(P_r)_{\vtx{v}_0}$ is isomorphic to $V^{(r)}$, and $V^{(r)}$ and $V^{(s)}$ are not conjugate if $r\ne s$. By Proposition~\ref{prop:dresidisquotient}, the level $d$ residue group of $P_r$ is isomorphic to $V^{(r)}\curvearrowright V^{(r)}/\alpha^d(V^{(r)})$ and, since $\alpha^d(V^{(r)})\triangleleft V^{(r)}$ and $S_3/A_3\cong C_2$, these groups are
$$
[P_r]_d\cong V^{(r)}/\alpha^d(V^{(r)}) \cong
\begin{cases}
A_3^{d}\times C_2^{d}, & \mbox{ if }d\leq r\\
A_3^r\times S_3^{d-r}\times C_2^r, & \mbox{ if }d>r
\end{cases}.
$$
Since they are not isomorphic for distinct values of $r$, the scale groups $P_r$ are not conjugate in $\Isom(\tree_7)_{\omega}$ although they are all isomorphic to $G(\Sym(3))$.

\noindent $\bullet$ Yet more scale group representations of $G(\Sym(3))$ may be found by considering, for $r>0$, the subgroups
$$
W^{(r)} = \left\{ h\in H \mid h_m = \iota \mbox{ if }m<-r \mbox{ and } h_m\in \langle\sigma\rangle\mbox{ if }-r\leq m <0\right\},
$$ 
which are tidy for $\alpha$ because $\alpha(W^{(r)})< W^{(r)}$. 

Denote the image of $G(\Sym(3))$ under its representation on $\tree_7$ by  $Q_r$. In this case $\alpha^d(W^{(r)})$ is not normal in $W^{(r)}$ and the level $d$ residue group $[Q_r]_d$ is isomorphic, by Proposition~\ref{prop:dresidisquotient}, to the quotient of $W^{(r)}$ by the normal core of $\alpha^d(W^{(r)})$. These groups are
$$
[Q_r]_d\cong 
\begin{cases} 
\langle\sigma\rangle^{d}\times S_3^{d}, & \mbox{ if }d\leq r\\
\langle\sigma\rangle^{r}\times S_3^{d-r}\times S_3^{r}, & \mbox{ if }d> r
\end{cases}.
$$
Hence the scale groups $Q_r$ are not conjugate for distinct~$r$ and are not conjugate to any scale group $P_r$.
\end{example}

A similar construction produces infinitely many abelian non-conjugate but isomorphic scale groups acting on $\tree_5$, as follows.
\begin{example}
\label{examp:Pnotunique2}
Let $F = C_4$, the cyclic group of order~$4$, and $A$ be the trivial subgroup. Define $G(F,A)$ as in \S\ref{sec:product_construction} and denote this group by $G(C_4)$.  Denote the order~$2$ subgroup of $C_4$ by $2C_4$ and define, for each $r\geq0$,
$$
V^{(r)} = \left\{ h\in C_4^{\mathbb{Z}} \mid h_m=0_{C_4} \mbox{ for all }m< -r,\ h_m\in 2C_4 \mbox{ if }-r\leq m <0\right\}.
$$
Denote the scale group acting on $\tree_5$ that is the image of $G(C_4)$ under the tree representation theorem by $P_r$. Then similar considerations to those above imply that
$$
[P_r]_d \cong
\begin{cases} 
(2C_4)^{d}\times (C_4/2C_2)^{d}, & \mbox{ if }d\leq r\\
(2C_4)^{r}\times C_4^{d-r}\times (C_4/2C_2)^{r}, & \mbox{ if }d> r
\end{cases}
$$
and the scale groups $P_r$ are not conjugate in $\Isom(\tree_5)_{\omega}$ for different values of~$r$ even though they are all isomorphic to $G(C_4)$.
\end{example}

\section{Groups acting doubly transitively on the boundary of a tree}
\label{sec:double_transitive}

If a scale group, $P$, is contained in a group, $G$, of isometries of $\treeq$ that does not fix the end fixed by $P$, then the action of $G$ on $\partial\treeq$ is $2$-transitive. The class of simple isometry groups of trees that are transitive on the boundary of the tree is proposed by P.-E.~Caprace in \cite{Caprace_microcosm} to be a microcosm of the class of all simple, compactly generated t.d.l.c.~groups, because many features of general groups can already be seen in these concrete isometry groups. Scale groups that are end-stabilisers of groups $2$-transitive on the boundary appear in this microcosm and it is of interest to characterise them.

Several groups, such as $\Isom(\treeq)$ itself and $PGL(2,\mathbb{Q}_p)$, are known to have $2$-transitive actions on $\partial\treeq$, and general groups with this property have been studied: in~\cite{Nebbia} in connection with harmonic analysis; and in~\cite{BM,Radu}, in connection with the structure of locally compact groups. In~\cite{Nebbia}, it is shown that if the closed subgroup $G$ is $2$-transitive on $\partial\tree$, then $G_{\vtx{w}}$ is transitive on $\partial\tree$ for every $\vtx{w}\in\gr{V}\tree$ (Proposition~1) and that $G$ has at most two orbits on $\gr{V}\tree$ (Proposition~2). See also~\cite[Lemma~3.1.1]{BM} and~\cite[Lemma~2.2]{Radu}. 
 
Results in~\cite{Nebbia} imply that the converse of the observation made in the first sentence of this section holds when $G$ is closed. 
\begin{lemma}
\label{lem:vertex_stabiliser}
Suppose that $G\leq \Isom(\tree)$ is closed and that the induced action on $\partial\tree$ is $2$-transitive. Let $\omega\in\partial\tree$. Then every $G$-orbit in $\gr{V}\tree$ is also a $G_\omega$-orbit. In particular, if the action of $G$ on $\gr{V}\tree$ is transitive, then $G_\omega$ is a scale group. 
\end{lemma}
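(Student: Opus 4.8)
The plan is to read the orbit statement off Nebbia's theorem that point stabilisers of $G$ in $\gr{V}(\tree)$ act transitively on $\partial\tree$, and then specialise. Since $G_\omega$ is a subgroup of $G$, every $G_\omega$-orbit in $\gr{V}(\tree)$ is contained in a $G$-orbit, so the $G_\omega$-orbits refine the $G$-orbits; it therefore suffices to prove the reverse, namely that two vertices lying in a common $G$-orbit already lie in a common $G_\omega$-orbit, after which the two partitions of $\gr{V}(\tree)$ coincide.

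To do this I would take $\vtx{v}_1,\vtx{v}_2\in\gr{V}(\tree)$ and $g\in G$ with $g.\vtx{v}_1=\vtx{v}_2$, and correct $g$ by a vertex-fixing element so that it fixes $\omega$ as well. The ends $g.\omega$ and $\omega$ both lie in $\partial\tree$; since $G$ is closed and $2$-transitive on $\partial\tree$, \cite[Proposition~1]{Nebbia} gives that $\stab_G(\vtx{v}_2)$ is transitive on $\partial\tree$, so there is $k\in\stab_G(\vtx{v}_2)$ with $k.(g.\omega)=\omega$. Then $h:=kg\in G$ satisfies $h.\omega=\omega$, so $h\in G_\omega$, while $h.\vtx{v}_1=k.\vtx{v}_2=\vtx{v}_2$. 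Hence $\vtx{v}_1$ and $\vtx{v}_2$ lie in one $G_\omega$-orbit, which proves the first assertion.

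For the ``in particular'' clause I would apply this with the single $G$-orbit taken to be all of $\gr{V}(\tree)$: then $\gr{V}(\tree)$ is a single $G_\omega$-orbit, i.e. $G_\omega=\stab_G(\omega)$ acts transitively on $\gr{V}(\tree)$. Since $G$, and hence $G_\omega$, is vertex-transitive, every automorphism carrying one vertex to another matches up their neighbours, so all vertices have the same valency and $\tree$ is regular; writing $q+1$ for that valency, $\tree=\treeq$. Finally $G_\omega$ is closed, being the stabiliser of the point $\omega\in\partial\treeq$ under the continuous action of the closed group $G$, it fixes $\omega$, and it is transitive on $\gr{V}(\treeq)$, so it is a scale group in the sense of Definition~\ref{defn:scalegp}.

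Given Nebbia's Proposition~1, there is essentially no obstacle here — the whole of the content is carried by that external result. If one wanted the lemma self-contained, the main work would be re-establishing the transitivity of $\stab_G(\vtx{v}_2)$ on $\partial\tree$; this is the natural analogue of Proposition~\ref{prop:transitive_on_ends} and would be proved in the same spirit, using $2$-transitivity on the boundary to move longer and longer initial segments of one ray based at $\vtx{v}_2$ onto those of another by elements of the compact group $\stab_G(\vtx{v}_2)$, and then passing to an accumulation point.
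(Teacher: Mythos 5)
Your proof is correct and rests on the same key input as the paper's, namely Nebbia's result that vertex stabilisers of a closed boundary-$2$-transitive group act transitively on $\partial\tree$ (Proposition~1 of~\cite{Nebbia}, as in the paper's surrounding discussion); your version is in fact slightly more direct, correcting the element $g$ with $g.\vtx{v}_1=\vtx{v}_2$ by a single element of $\stab_G(\vtx{v}_2)$, whereas the paper first passes to an intermediate vertex $\vtx{w}$ on the common ray $[\vtx{v}_1,\omega)\cap[\vtx{v}_2,\omega)$ (using that $G$ has at most two vertex orbits) and applies the correction twice. Your reading of the ``in particular'' clause as concerning $\stab_G(\omega)$, together with the observation that vertex-transitivity forces $\tree$ to be regular and that $G_\omega$ is closed, is the intended one and completes the argument.
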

\begin{proof}
Suppose that $\vtx{u},\vtx{v}\in \gr{V}\tree$ are in the same $G$-orbit. Then, since the action of $G$ on $\partial\tree$ has only one or two $G$-orbits in $\gr{V}\tree$ and $\tree$ is either regular or semiregular, the infinite ray $[\vtx{u},\omega)\cap[\vtx{v},\omega)$ contains an element, $\vtx{w}$, which belongs to the same $G$-orbit as $\vtx{u}$ and $\vtx{v}$.

Choose $g\in G$ with $g.\vtx{u} = \vtx{w}$. Then, since $G_{\vtx{w}}$ is transitive on $\partial\tree$ by \cite[Proposition 2]{Nebbia}, there is $h\in G$ such that $h.\vtx{w} = \vtx{w}$ and $h.(g.\omega) = \omega$. Therefore $hg$ is in $G_{\omega}$ and $(hg).\vtx{u} = \vtx{w}$. Repeating the argument for $\vtx{v}$ shows that there are $h',g'\in G_{\omega}$ such that $(h'g').\vtx{v} = \vtx{w}$. Then $(h'g')^{-1}(hg)\in G_{\omega}$ and $(h'g')^{-1}(hg).\vtx{u} = \vtx{v}$. 
\end{proof}

The following is a precise statement of the question in the first paragraph.
\begin{question}
\label{prob:2-trans?}
Which scale groups are equal to $G_{\omega}$ with $G\leq\Isom(\treeq)$ closed and $2$-transitive on $\partial\treeq$? 
\end{question}
Certain scale groups can be ruled out as end stabilisers because their residue must be a stabiliser of $q$ in a $2$-transitive permutation group on $\{0,1\dots,q\}$ and such $2$-transitive groups have been classified, see~\cite[\S7.7]{DixMort}. The remainder of this section is devoted to remarks on arguments of this kind.

A first approach is to appeal to the current structure theory of groups $G\leq\Isom(\treeq)$ transitive on $\gr{V}\treeq$ and $2$-transitive on $\partial\treeq$. The \emph{local action} of $G$ is a group $F\leq\Sym(q+1)$ comprising the permutations of $\gr{E}_{\vtx{v}}$ induced by the action of $G_{\vtx{v}}$ for a given vertex~$\vtx{v}$. This group is independent of $\vtx{v}$ up to conjugacy, see~\cite[\S0.2]{BM}. In~\cite[Lemma~3.1.1]{BM}, it is shown that $F$ is $2$-transitive and the classification of $2$-transitive finite permutation groups in~\cite{DixMort} is exploited to deduce results about the structure of $G$. 

For each group $F\leq\Sym(q+1)$ there is a vertex-transitive so-called \emph{maximal subgroup} $U(F)\leq \Isom(\treeq)$ having local action $F$. Proposition~3.2.2 in~\cite{BM} shows that, if $F$ is transitive, then every subgroup of $\Isom(\treeq)$ having local action~$F$ is contained in a conjugate of $U(F)$. If $F$ is $2$-transitive, then $U(F)$ is $2$-transitive on $\partial\treeq$. The discussion in~\cite[\S3.2]{BM} implies that $U(F)_\omega$ is isomorphic to the scale group $P^{(F_0)}$ defined in Example~\ref{examp:scale_group2}, with $F_0$ being the stabiliser of $0$ of $F\leq\Sym(q+1)$. The residue group of this scale group is~$F_0$ and, as pointed out in Remark~\ref{rem:residue}, $P^{(F_0)}$ is maximal with this residue. 

The next proposition collects parts of several results from~\cite{BM}. Recall that a group is \emph{quasisimple} if it is a perfect central extension of a simple group.
\begin{proposition}[M. Burger \& S. Mozes \cite{BM}, \S3.3]
\label{prop:local_struct}
Let the closed group $G\leq\Isom(\treeq)$ be transitive on $\gr{V}\treeq$ and $2$-transitive on $\partial\treeq$ and let $F$ be the local action of $G$. Suppose that $F_0$ is non-abelian and quasisimple. Then $G = U(F)$. 
\end{proposition}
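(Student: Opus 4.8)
\noindent The plan is to reduce, via the scale-group/self-replicating correspondence of Section~\ref{sec:s.s&s.r}, to the statement that the residue groups of $\stab_G(\omega)$ are as large as possible at every level, and to prove that by induction on the level, using the quasisimplicity of $F_0$ to control the local actions that occur and the $2$-transitivity on the boundary to exclude degeneracies. Concretely: since $G$ is $2$-transitive on $\partial\treeq$ its local action $F$ is $2$-transitive by \cite[Lemma~3.1.1]{BM}, hence transitive, so $G$ lies in a conjugate of $U(F)$ by \cite[Proposition~3.2.2]{BM}; after conjugating, assume $G\leq U(F)$. By Lemma~\ref{lem:vertex_stabiliser}, $P:=\stab_G(\omega)$ is a scale group, and its residue is $F_0$: an element of $\stab_G(\vtx v)$ fixing $\omega$ fixes the edge $(\vtx v,\vtx v^+)$, so it induces on the remaining $q$ edges at $\vtx v$ exactly the point stabiliser $F_0$. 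The same computation applied to $U(F)$ shows $\stab_{U(F)}(\omega)$ is a scale group with residue $F_0$, and it is a maximal one (it is isomorphic to $P^{(F_0)}$; see \cite[\S3.2]{BM} and Remark~\ref{rem:residue}); thus $P\leq\stab_{U(F)}(\omega)$. It suffices to prove $P=\stab_{U(F)}(\omega)$: then $G$ and $U(F)$ are $2$-transitive on $\partial\treeq$ with the same stabiliser of $\omega$, and since $\stab_{U(F)}(\omega)$ is transitive on $\partial\treeq\setminus\{\omega\}$ (as $U(F)$ is $2$-transitive on $\partial\treeq$) the double-coset decomposition $U(F)=\stab_{U(F)}(\omega)\cup\stab_{U(F)}(\omega)\,s\,\stab_{U(F)}(\omega)$, together with $\stab_{U(F)}(\omega)=\stab_G(\omega)\leq G$, forces $s\in G$ and hence $G=U(F)$. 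By the (inclusion-compatible, bijective) correspondence of Propositions~\ref{prop:s.s.&s.r.} and~\ref{prop:s.s.&s.r.2}, together with Proposition~\ref{prop:compatible_label} and closedness, proving $P=\stab_{U(F)}(\omega)$ is equivalent to proving that $\Psr{P}$ equals $\Psr{P^{(F_0)}}$, the group of all isometries of $\treeqq$ whose local action at every vertex lies in $F_0$; equivalently, that $[P]^{(d)}=[P^{(F_0)}]^{(d)}$ for every $d$.

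\emph{The rôle of $2$-transitivity.} A closed self-replicating group with level-$1$ action $F_0$ need not be maximal, so ``$P$ is a scale group'' is not enough, and the full hypothesis that $G$ is $2$-transitive on $\partial\treeq$ must be used. It enters through the fact that $\stab_G(\vtx v)$ is transitive on $\partial\treeq$ for every vertex $\vtx v$, \cite[Proposition~1]{Nebbia}. Passing to the stabilisers of the cones at $\vtx v$ and iterating, this gives, for each $d$ and each vertex $\vtx u$ on level~$d$ of $\treeqq$, a subgroup of $\Psr{P}$ fixing the path from the root to $\vtx u$ and acting transitively on the boundary cone of $\treeqq$ below $\vtx u$; more to the point, combined with the tree structure it supplies automorphisms in $\Psr{P}$ supported on a single pendant subtree below a chosen level-$d$ vertex. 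It is this ``independence of pendant subtrees'' that will rule out the degenerate cases in the induction.

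\emph{The induction.} For $d=1$ both residue groups equal $F_0$ by the computation above. Assume $[P]^{(d)}=[P^{(F_0)}]^{(d)}$ and let $K_d\trianglelefteq\Psr{P}$ be the pointwise stabiliser of $\treeqq^{(d)}$, which is normal because $\Psr{P}$ preserves levels. For a level-$d$ vertex $\vtx u$, the self-replicating property gives $\stab_{\Psr{P}}(\vtx u)|_{\vtx u}=\Psr{P}$, so $\stab_{\Psr{P}}(\vtx u)$ acts on the $q$ children of $\vtx u$ as $F_0$; since $K_d\leq\stab_{\Psr{P}}(\vtx u)$ is normal in $\Psr{P}$, the image $M_{\vtx u}$ of $K_d$ there is a normal subgroup of $F_0$. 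Being quasisimple, $F_0$ is perfect with $F_0/Z(F_0)$ simple, so every proper normal subgroup of $F_0$ is central; hence $M_{\vtx u}\leq Z(F_0)$ or $M_{\vtx u}=F_0$, the same alternative for all level-$d$ vertices since $\Psr{P}$ permutes them transitively. The subtree-supported automorphisms of the previous paragraph have non-central local action at some level-$d$ vertex, so the central alternative is impossible and $M_{\vtx u}=F_0$. Finally, the joint image $\widetilde{K_d}$ of $K_d$ in $\prod_{\vtx u}F_0$, the product over the level-$d$ vertices, is a subdirect product normalised by the level-$d$ action of $\Psr{P}$, which by the inductive hypothesis equals that of $\Psr{P^{(F_0)}}$; as $F_0$ is $2$-transitive and hence primitive, the ``linking'' of $\widetilde{K_d}$ can occur only along one of the few invariant partitions of the level-$d$ vertices, and the same subtree-supported automorphisms, together with the non-abelian quasisimple structure of $F_0$, force all linking blocks to be singletons, i.e.\ $\widetilde{K_d}=\prod_{\vtx u}F_0$. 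This matches $\Psr{P^{(F_0)}}$, completing the induction. Hence $\Psr{P}=\Psr{P^{(F_0)}}$, so $P=\stab_{U(F)}(\omega)$ by the reduction, and therefore $G=U(F)$.

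\emph{Main obstacle.} The heart of the argument is the inductive step: excluding the central possibility for $M_{\vtx u}$ and excluding a proper subdirect (``diagonal'') joint image $\widetilde{K_d}$, i.e.\ establishing enough independence between the actions of $G$ on disjoint pendant subtrees. This is precisely where the $2$-transitivity on $\partial\treeq$ (giving primitivity of $F_0$ and, via \cite{Nebbia}, transitivity of vertex stabilisers) must be combined with the perfect, non-abelian, quasisimple structure of $F_0$, and it is the point carried out in \cite[\S3.3]{BM}. The remaining steps are bookkeeping with the correspondence of Section~\ref{sec:s.s&s.r} and with the identification of $\stab_{U(F)}(\omega)$ with $P^{(F_0)}$.
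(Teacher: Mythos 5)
There is a genuine gap. The paper does not prove this proposition at all --- it is quoted from Burger--Mozes \cite{BM}, \S3.3 --- so your attempt would have to supply the substance of their argument, and it does not. The outer reduction (conjugating into $U(F)$, the double coset argument showing that $P=\stab_{U(F)}(\omega)$ forces $G=U(F)$, and the translation via the correspondence $P\leftrightarrow\Psr{P}$ of Section~\ref{sec:s.s&s.r} into the statement $[P]^{(d)}=[P^{(F_0)}]^{(d)}$ for all $d$) is reasonable bookkeeping, but the induction that is supposed to prove this is not a proof. The decisive claim --- that $2$-transitivity on $\partial\treeq$ together with \cite[Proposition~1]{Nebbia} ``supplies automorphisms in $\Psr{P}$ supported on a single pendant subtree'' --- does not follow: transitivity of vertex stabilisers on the boundary gives no independence whatsoever between the actions on disjoint half-trees (that kind of flexibility is a Tits-independence-type property, and establishing enough of it from the quasisimplicity of $F_0$ is precisely the content of \cite{BM}, \S3.3, as your closing paragraph concedes). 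Without those elements, both exclusions in your inductive step --- $M_{\vtx{u}}\not\leq Z(F_0)$ and the non-existence of a proper (``diagonal'') subdirect image $\widetilde{K_d}$ --- are unsupported; and since the conclusion $G=U(F)$ is simply false without the quasisimplicity hypothesis (there are proper closed vertex-transitive subgroups of $U(F)$ that are $2$-transitive on $\partial\treeq$ for suitable non-quasisimple $F_0$), no argument as soft as the one sketched can close this step.

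Two subsidiary claims are also wrong or unjustified. First, the base case: an element of $\stab_G(\vtx{v})$ realising a given permutation in $F_0$ fixes the edge $(\vtx{v},\vtx{v}^+)$ but need not fix the end $\omega$, so your computation only shows that the residue of $P=\stab_G(\omega)$ is a transitive subgroup of $F_0$; proving it is all of $F_0$ is already part of the hard content, not a triviality. Second, ``$F_0$ is $2$-transitive and hence primitive'' is not a consequence of the hypotheses: the point stabiliser of a finite $2$-transitive group need not be primitive on the remaining $q$ points, and neither does quasisimplicity of $F_0$ give primitivity, so even the partition-based control of $\widetilde{K_d}$ you appeal to is not available as stated. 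As it stands the proposal is a plausible strategy outline whose essential steps are deferred to \cite{BM}, which is exactly what the paper itself does by citation.
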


Thus, in particular, there is a unique group up to conjugacy that is transitive on $\gr{V}\treeq$, $2$-transitive on $\partial\treeq$ and has local action~$F$. The finite $2$-transitive groups $F$ such that $F_0$ is quasisimple are listed in Examples~3.3.1 and~3.3.3 in~\cite{BM} and it follows from Proposition~\ref{prop:local_struct} and the discussion that precedes it that $P^{(F_0)}$ is the only scale group with residue $F_0$ that is a subgroup of a group $2$-transitive on $\partial\treeq$. Hence the scale groups seen in Example~\ref{examp:scale_group} having $S=F_0$ are not the stabiliser of an end in a group transitive on $\gr{V}\treeq$ and $2$-transitive on $\partial\treeq$. 

Further progress on the structure of closed groups $G\leq\Isom(\treeq)$ which are $2$-transitive on $\partial\treeq$ is made in~\cite{Radu}. In that paper, all groups whose local action contains $\Alt(q+1)$ are described. For some values of $q$ (not including prime powers) every primitive, and hence in particular every $2$-transitive, subgroup of $\Sym(q+1)$ contains $\Alt(q+1)$ and for these values of $q$ groups $2$-transitive on $\partial\treeq$ are completely classified. 

A second approach uses the observation that the group $F\leq\Sym(q+1)$ is sharply $2$-transitive if and only if~$F_0$ is a regular transitive permutation group. Finite sharply $2$-transitive groups are classified in~\cite{Zass} and shown to be affine groups of near fields~\cite{Tent}. It follows that a scale group $P$ whose residue $[P]$ is regular (sharply $1$-transitive) cannot be the end stabiliser of a group $2$-transitive on $\partial\tree$ unless $[P]$ is an action by the `multiplicative' group of a near field. The near fields are listed in~\cite{nearfield}. 

 Arguments using the classification of $2$-transitive groups do not have anything to say about the case when $q=2$ and little to say if $q<5$ because $\Alt(5)$ is the smallest non-abelian simple group. New arguments are needed to decide when a scale group is an end stabiliser for these small values of $q$ and to give a more complete solution when $q\geq5$. The closures of the Grigorchuk~\cite{Grigorchuk} and Gupta-Sidki~\cite{Gupta-Sidki} groups in $\Isom(\tree_{2,2})$ and $\Isom(\tree_{p,p})$, for $p$ an odd prime, respectively are self-replicating and more information than the residue is needed to decide whether the corresponding scale groups are end stabilisers. 
\begin{question}
\label{prob:2-trans_special}
Are the closures of the Grigorchuk and Gupta-Sidki groups equal to $G_{\omega}|_{\tree_{\vtx{v}}}$ with $G\leq\Isom(\treeq)$ closed and $2$-transitive on $\partial\treeq$? 
\end{question}

\begin{remark}
\label{rem:Rover}
Embeddings of `the' Grigorchuk group into t.d.l.c.~groups are studied in~\cite{Rover}, which describes abstract commensurators of groups acting on rooted trees. Theorem~1.2 in that paper asserts that if the group is a level transitive branch group, which applies to the Grigorchuk and Gupta-Sidki groups, then its abstract commensurator is isomorphic to its relative commensurator in the homeomorphism group of the boundary of the tree. Theorem~1.1 asserts that if the group is commensurable with its own $n^{\rm th}$ power, which also applies to the Grigorchuk and Gupta-Sidki groups, then its commensurator contains a subgroup isomorphic to a Higman-Thompson group $G_{n,1}$. Since it contains a Higman-Thompson group, the abstract commensurator cannot act by isometries on a locally finite regular tree. The results in~\cite{Rover} do not help with Question~\ref{prob:2-trans_special} because the self-replicating group is not commensurated by the scale group it embeds into under correspondence described in \S\ref{sec:s.s&s.r}. 
\end{remark}

The results in \cite{Rover} imply that the scale groups corresponding to the Grigorchuk and Gupta-Sidki groups are isomorphic to $V_{--}\rtimes\langle x\rangle$ with $x$ belonging to a compactly generated t.d.l.c.~group. This raises the following companion to Question~\ref{prob:2-trans?} outside the microcosm of groups acting on trees.
\begin{question}
	\label{prob:in_S?}
	Which scale groups are quotients of $V_{--}\rtimes\langle x\rangle$ with $G$ a compactly generated, virtually topologically simple t.d.l.c.~group and $x\in G$? 
\end{question}

Conjecture~6.1 in \cite{CapdeMedts}  proposes another criterion in answer to Question~\ref{prob:2-trans?}. The assertion conjectured implies that, if the action of $P$ on $\partial\treeq\setminus\{\omega\}$ is virtually regular, {\it i.e.\/}, if $P_{\eta}$ is finite for some end $\eta\ne\omega$, then $P$ cannot be equal to $G_{\omega}$ for any $G$ that is $2$-transitive on $\partial\treeq$. (Note that virtual regularity of $P$ on $\partial\treeq\setminus\{\omega\}$ is formally weaker than the condition discussed above that the residue $[P]$ is regular.) Conjecture~6.1 has the following companion outside the microcosm of groups acting on trees. 

\begin{Conjecture}
	\label{conj:not_virt_reg}
	Suppose that $G$ is compactly generated and topologically simple. Then, for any $x\in G$ with $q=s(x^{-1})>1$, the scale group action of $V_{--}\rtimes\langle x\rangle$ on $\tree_{q+1}$ is not virtually regular on $\partial\tree_{q+1}\setminus\{\omega\}$. 
\end{Conjecture}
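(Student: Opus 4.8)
Note first that as \emph{literally} stated --- ``$\stab_P(\eta)$ finite for some $\eta\ne\omega$'' --- the conclusion holds for \emph{every} scale group: applying Corollary~\ref{cor:transitive_on_ends} to any edge on the path $(\omega,\eta)$ produces a nontrivial translation of the axis $(\omega,\eta)$ lying in $P$, so $\stab_P(\eta)$ always contains a copy of $\mathbb Z$. The substantive content is therefore the \emph{virtual} statement, and the first step of the plan is to reformulate it intrinsically. Write $q=s(x^{-1})$, let $\rho:V_{--}\rtimes\langle x\rangle\to\Isom(\treeq)_\omega$ with $P=\rho(V_{--}\rtimes\langle x\rangle)=P^{(e)}\rtimes\langle x_0\rangle$ as in Theorem~\ref{thm:tree_rep}, and set $V_0:=\bigcap_{n\in\mathbb Z}x^nVx^{-n}$, a compact $x$-invariant subgroup containing $\ker\rho$. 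I claim $P$ is virtually regular on $\partial\treeq\setminus\{\omega\}$ exactly when $\rho(V_0)$ is finite. Indeed, an elliptic element of $P$ fixing $\eta\ne\omega$ fixes a vertex on the axis $(\omega,\eta)$ (it fixes a ray toward $\omega$ and a ray toward $\eta$), and fixing such a vertex together with $\eta$ forces it to fix the whole axis pointwise; conversely the elliptic elements fixing a given axis pointwise are precisely $\bigcap_n x_0^n\rho(V_-)x_0^{-n}=\rho\bigl(\bigcap_n x^nV_-x^{-n}\bigr)=\rho(V_0)$. Since $P^{(e)}$ is transitive on $\partial\treeq\setminus\{\omega\}$ by Proposition~\ref{prop:transitive_on_ends}, every such axis-fixator is a conjugate of $\rho(V_0)$, and $\stab_P(\eta)$ is ``virtually cyclic'' precisely when $\rho(V_0)$ is finite. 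As $\ker\rho$ is the largest compact normal subgroup of $V_{--}\rtimes\langle x\rangle$, this reformulates virtual regularity as: $\ker\rho$ is open in $V_0$, i.e.\ $[V_0:\ker\rho]<\infty$.

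\textbf{Structural consequences.} Assume for contradiction that $\ker\rho$ is open in $V_0$ while $G$ is compactly generated and topologically simple. Dividing $V_{--}\rtimes\langle x\rangle$ by $\ker\rho$ (harmless below), the point stabilisers of $P^{(e)}\curvearrowright\partial\treeq\setminus\{\omega\}$ become finite. By Lemma~\ref{lem:con_trans}, $\overline{\con(x_0)}$ is already transitive on $\partial\treeq\setminus\{\omega\}$, so $P^{(e)}=\overline{\con(x_0)}\cdot L$ with $L=\stab_{P^{(e)}}(\eta)$ finite; thus $\overline{\con(x_0)}$ has finite index in $P^{(e)}$. Since $\con(x_0)$ is a contraction group it is nilpotent, hence so is $\overline{\con(x_0)}$, and therefore $P^{(e)}$ and $P=P^{(e)}\rtimes\langle x_0\rangle$ are amenable; moreover $P$ is ``of affine type'' in that fixing one axis $(\omega,\eta)$ pins down all but a compact piece of $\treeq$. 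Pulling back through $\rho$ (which has compact kernel), $\rho(\con_G(x))=\con(x_0)$, so $\overline{\con_G(x)}$ is an extension of $\overline{\con(x_0)}$ by the compact group $\overline{\con_G(x)}\cap\ker\rho$; in particular $\overline{\con_G(x)}$ is compact-by-nilpotent and $V_{--}\rtimes\langle x\rangle$ is amenable.

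\textbf{The crux.} The remaining task is to derive a contradiction with topological simplicity of the ambient $G$. Amenability of the subgroup $V_{--}\rtimes\langle x\rangle$ alone is not enough --- a compactly generated topologically simple group may contain $(ax+b)$-subgroups, as $\mathrm{PSL}_2(\mathbb Q_p)$ shows --- so the argument must exploit the \emph{thinness} of the contraction direction, not merely the subgroup. The route I would pursue is local-structural: realise $G$ faithfully and cocompactly on its Cayley--Abels graph $\Gamma$ (faithfulness using topological simplicity), and show that $\ker\rho$ being open in $V_0$ forces the part of $\Gamma$ contracted by $x$ to be thin of affine / near-field type, so that $\overline{\con_G(x)}$ --- and, by conjugating $x$ inside $G$, each contraction group meeting the flat of $x$ --- normalises a proper nontrivial locally normal subgroup of $G$; this contradicts the structure theory of compactly generated topologically simple t.d.l.c.\ groups (trivial quasi-centre, no nontrivial abelian locally normal subgroup). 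Making the implication ``thin contraction direction $\Rightarrow$ proper locally normal subgroup'' precise --- ideally via the $2$-transitive / Moufang-set rigidity already invoked for the residue in \S\ref{sec:double_transitive}, now applied to the full local action --- is where I expect the real obstacle to lie. An alternative strategy would be to establish the tree-side Conjecture~6.1 of \cite{CapdeMedts} and then transfer it: by Theorem~\ref{thm:tree_rep} every $V_{--}\rtimes\langle x\rangle$ with $x$ in a compactly generated topologically simple group is a scale group, so one would need a ``$2$-transitive completion'' statement (every such scale group is $\stab_H(\omega)$ for some $H$ that is $2$-transitive on $\partial\treeq$), which is itself open, cf.\ Question~\ref{prob:2-trans?}; supplying or bypassing that completion is the crux of this second route.
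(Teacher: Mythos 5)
The statement you were asked about is not a theorem of the paper but an open conjecture: the paper offers no proof of Conjecture~\ref{conj:not_virt_reg}, presenting it as a companion, outside the tree microcosm, to Conjecture~6.1 of \cite{CapdeMedts}, and explicitly leaving the surrounding questions (Questions~\ref{prob:2-trans?} and~\ref{prob:in_S?}, and even the existence of uniscalar compactly generated topologically simple groups) unresolved. Your proposal does not close this gap, and to your credit you say so: after the preliminary reformulation and the ``structural consequences'' paragraph, the entire burden of the argument is placed on an implication you describe only as a route you ``would pursue'' (thin contraction direction forces a proper locally normal subgroup contradicting simplicity), or alternatively on first proving Conjecture~6.1 of \cite{CapdeMedts} together with a ``$2$-transitive completion'' statement that is precisely the open Question~\ref{prob:2-trans?}. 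Neither route is carried out, so what you have is a reduction plus a research plan, not a proof; there is nothing in the paper it could be checked against, because no proof is known.

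Two points on the material you do argue. First, your observation that, read literally, $\stab_P(\eta)$ always contains an infinite cyclic group of translations (via Corollary~\ref{cor:transitive_on_ends}) is correct, and your reinterpretation of virtual regularity as finiteness of the pointwise fixator of the axis, i.e.\ of $\rho\bigl(\bigcap_{n\in\mathbb{Z}}x^nVx^{-n}\bigr)$, is a reasonable reading of the intended condition from \cite{CapdeMedts}; just be aware it is your gloss, not something the paper states. Second, the claim ``since $\con(x_0)$ is a contraction group it is nilpotent'' is false: closed contraction groups of t.d.l.c.\ groups need not be nilpotent, as the paper's own construction in \S\ref{sec:product_construction} shows --- for $G(\Sym(3))$ one has $\con(\alpha)=V_{--}=H$, which contains an infinite product of copies of $\Sym(3)$. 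Closed contraction groups are amenable (by the Gl\"ockner--Willis structure theory, being an extension of a locally elliptic torsion part by a nilpotent $p$-adic analytic part), so the amenability conclusions you draw survive, but not by the argument given. None of this, however, touches the essential issue: the crux you identify is exactly the open content of the conjecture, and it remains open.
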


It is not known whether any compactly generated, topologically simple and uniscalar t.d.l.c.~groups exist, and Conjecture~\ref{conj:not_virt_reg} says nothing about such groups. If the notion of scale group were extended to include the uniscalar groups, in which case $P\cong\mathbb{Z}$ and the tree on which $P$ is represented is a line, and Conjecture~\ref{conj:not_virt_reg} were extended to that case, then it would include the conjecture that there are no uniscalar compactly generated, topologically simple t.d.l.c.~groups.


%
%

%
%
%
\end{document}